\newcommand{\Exp}{\mathbb{E}}
\newcommand{\E}[1]{{\mathbb{E}\left[#1\right] }}    
\newcommand{\EE}[2]{{\mathbb{E}_{#1}\left[#2\right] }} 
\newcommand{\Var}[1]{\mathbf{Var}\left[#1\right]}
\newcommand{\Prob}[1]{\mathbb{P} \left[ #1\right]}
\newcommand{\R}{\mathbb{R}}
\newcommand{\N}{\mathbb{N}}
\DeclareMathOperator{\argmin}{arg\,min}
\newcommand{\eqdef}{:=}
\newcommand{\cD}{{\cal D}}
\newcommand{\cL}{{\cal L}}
\newcommand{\cO}{{\cal O}}
\newcommand{\cX}{{\cal X}}
\newcommand{\mA}{{\bf A}}
\newcommand{\mR}{{\bf R}}
\definecolor{shadecolor}{gray}{0.90}
\declaretheoremstyle[
headfont=\normalfont\bfseries,
notefont=\mdseries, notebraces={(}{)},
bodyfont=\normalfont,
postheadspace=0.5em,
spaceabove=1pt,
mdframed={
  skipabove=4pt,
  skipbelow=4pt,
  hidealllines=true,
  backgroundcolor={shadecolor},
  innerleftmargin=3pt,
  innerrightmargin=3pt}
]{shaded}
\declaretheorem[style=shaded,within=section]{definition}
\declaretheorem[style=shaded,sibling=definition]{theorem}
\declaretheorem[style=shaded,sibling=definition]{proposition}
\declaretheorem[style=shaded,sibling=definition]{assumption}
\declaretheorem[style=shaded,sibling=definition]{corollary}
\declaretheorem[style=shaded,sibling=definition]{lemma}
\providecommand{\trace}[1]{{\rm Trace}\left( #1\right)}
\providecommand{\norm}[1]{\left\| #1\right\|}
\newcommand{\dotprod}[1]{\left< #1\right>}
\begin{document}

\twocolumn[

\aistatstitle{SGD for Structured Nonconvex Functions: Learning Rates, Minibatching and Interpolation}

\aistatsauthor{ Robert M. Gower$^*$\And Othmane Sebbouh \And  Nicolas Loizou$^*$ }

\aistatsaddress{ LTCI, T\'el\'ecom Paris, \\
Institut Polytechnique de Paris \And  ENS Paris,\\ CREST-ENSAE, CNRS \And Mila and DIRO, \\Universit\'{e} de Montr\'{e}al} ]

\begin{abstract}
Stochastic Gradient Descent (SGD) is being used routinely for optimizing non-convex functions. Yet, the standard convergence theory for SGD in the smooth non-convex setting gives a slow sublinear convergence to a stationary point. In this work, we provide several convergence theorems for SGD showing convergence to a global minimum for non-convex problems satisfying some extra structural assumptions. In particular, we focus on two large classes of structured non-convex functions: (i) Quasar (Strongly) Convex functions (a generalization of convex functions) and (ii) functions satisfying the Polyak-Lojasiewicz condition (a generalization of strongly-convex functions).
Our analysis relies on an \emph{Expected Residual} condition which we show is a strictly weaker assumption than previously used growth conditions, expected smoothness or bounded variance assumptions. We provide theoretical guarantees for the convergence of SGD for different step-size selections including constant, decreasing and the recently proposed stochastic Polyak step-size. 
In addition, all of our analysis holds for the arbitrary sampling paradigm, and as such, we give insights into the complexity of minibatching and determine an optimal minibatch size. Finally, we show that for models that interpolate the training data, we can dispense of our Expected Residual condition and give state-of-the-art results in this setting. \\[0.6cm]
\end{abstract}
\section{INTRODUCTION}
We consider the unconstrained finite-sum optimization problem
\begin{equation}
\label{MainProb}
\min_{x\in\R^d} \left[ f(x) = \frac{1}{n} \sum_{i=1}^n f_i(x)\right].
\end{equation} 
We use $\cX^* \subset \R^d$ to denote the set of minimizers $x^*$ of~\eqref{MainProb} and assume that $\cX^*$ is not empty and that $f(x)$ is lower bounded.  This problem is prevalent in machine learning tasks where~$x$ corresponds to the model parameters, $f_i(x)$ represents the loss on the training point $i$ and the aim is to minimize the average loss $f(x)$ across training points. 

When $n$ is large, stochastic gradient descent (SGD) and its variants are the preferred methods for solving \eqref{MainProb} mainly because of their cheap per iteration cost. The standard convergence theory for SGD~\citep{robbins1951stochastic, NemYudin1978, NemYudin1983book, Pegasos, Nemirovski-Juditsky-Lan-Shapiro-2009, lowernonconvex19, HardtRechtSinger-stability_of_SGD} in the smooth nonconvex setting shows slow sub-linear convergence to a stationary point. 
Yet in contrast, when applying SGD to many practical nonconvex problems of the form~\eqref{MainProb} such as matrix completion~\citep{pmlr-v37-sa15}, deep learning~\citep{ma2018power}, and phase retrieval~\citep{ShuoTanVershynin2019} the iterates converge globally, and sometimes, even linearly.  This is because these problems often have additional structure and properties, such as all local minimas are global  minimas~\citep{pmlr-v37-sa15,Kawaguchi2016}, the model interpolates the data~\citep{ma2018power} or the function under study is unimodal on all lines through a minimizer~\citep{hinder2019near}. By exploiting these structures and properties one can prove significantly tighter  convergence bounds.

Here we present a 
general analysis of SGD for two large classes of structured nonconvex functions: (i) the Quasar (Strongly) Convex functions and (ii) functions satisfying the Polyak-Lojasiewicz (PL) condition. In all of our results we provide convergence guarantees for SGD to the \emph{global minimum.} We also develop several corollaries for functions that interpolate the data.
\subsection{Background and Main Contributions}

\textbf{Classes of structured nonconvex functions.} The last few years has seen an increased interest in exploiting additional structure prevalent in large classes of nonconvex functions. Such conditions include error bound properties~\citep{fabian2010error},  essential strong convexity~\citep{liu2014asynchronous}, quasi strong convexity~\citep{Necoara-2018, gower2019sgd}, the restricted secant inequality~\citep{zhang2013gradient}, and the quadratic growth (QG) condition~\citep{anitescu2000degenerate, loizou2019randomized}. 
We focus on two of the weakest conditions: the quasar (strongly) convex functions \citep{hinder2019near, hardt2018dynam,Guminov2017} and functions satisfying the PL condition~\citep{polyak1987introduction,lojasiewicz1963topological,karimi2016linear}. 
The class of quasar-convex functions include all convex functions as a special case, but it also includes several nonconvex functions. Recently there is also some evidences suggesting that the loss function of neural networks have a quasar-convexity structure \citep{zhou2018sgd, kleinberg2018alternative}. 

\noindent \textit{Contributions.}  We show that SGD converges at a $\cO(1/\sqrt{k})$ rate on the \emph{quasar-convex functions} and prove linear convergence to a neighborhood for PL functions without any bounded variance assumption or growth assumptions on the stochastic gradients.  Instead, we rely on the recently introduced \emph{expected residual} (ER) condition~\citep{GowerRichBach2018}. 

\textbf{Assumptions on the gradient.} The standard convergence analysis of SGD in the nonconvex setting relies  on the bounded  gradients assumption $\Exp_i\|\nabla f_i(x^k)\|^2 <c$  \citep{recht2011hogwild, hazan2014beyond, rakhlin2012making} or  a growth condition $\Exp_{i}\|\nabla f_{i} (x^k)\|^2 \leq c_1 + c_2 \Exp\|\nabla f(x^k)\|^2$ \citep{Bertsekas:neurodynamic, bottou2018optimization, schmidt2017minimizing}. There is now a line of recent works \citep{pmlr-v80-nguyen18c, vaswani2018fast, gower2019sgd, Khaled-nonconvex-2020, lei2019stochastic, koloskova2020unified, loizou2020stochastic} which aims at relaxing these assumptions.

\noindent \textit{Contributions.} We use the recently introduced Expected Residual (ER) condition~\citep{GowerRichBach2018}.
We give the first convergence proofs for SGD under the \ref{eq:expresidual} condition and we show that \ref{eq:expresidual} is a strictly weaker assumption than the Strong Growth Condition (SGC)~\citep{schmidt2013fast}, Weak Growth (WGC)~\citep{vaswani2018fast} or the Expected Smoothness (ES)~\citep{gower2019sgd} assumptions.
Furthermore,  we show that the  \ref{eq:expresidual} condition holds for a large class of nonconvex functions including 1) smooth and interpolated functions 2) smooth and \emph{$x^*$-- convex} functions\footnote{The $x^*$-- convexity includes all convex functions and several nonconvex functions.}. Not only does the ER assumption hold for a larger class of functions, our resulting convergence rates under ER either match or exceed the state-of-the-art for quasar-convex and PL functions.

\textbf{PL condition.} The  PL condition \citep{polyak1987introduction,lojasiewicz1963topological} was introduced as a sufficient condition for the linear convergence of Gradient Descent for nonconvex functions. Assuming bounded gradients, it was shown in \cite{karimi2016linear} that SGD with a decreasing step size converges sublinearly at a rate of $\cO(1/\sqrt{k})$ for PL functions. In contrast,  by using a step size which depends on the total number of iterations, the same convergence rate can be achieved  without the need for the bounded gradient assumption~\citep{Khaled-nonconvex-2020}. Assuming in addition the interpolation condition and  SGC~\cite{vaswani2018fast} showed that SGD converges linearly for PL functions, but the specialization of this last result to gradient descent results in a suboptimal dependence on the condition number\footnote{Theorem~4 in~\cite{vaswani2018fast} specialized to GD gives a rate of $\mu^2/L^2$ where $L$ is the smoothness constant and $\mu$ the PL constant.} of the function. 

\noindent \textit{Contributions.} We provide a complete  minibatch analysis of SGD for PL functions which recovers the best known dependence on the condition number for Gradient Descent \citep{karimi2016linear} while also matching the current state-of-the-art rate derived in \cite{vaswani2018fast,lei2019stochastic} for SGD for interpolated functions. All of which relies on the weaker \ref{eq:expresidual} condition.
Moreover, we propose a switching step size scheme similar to \cite{gower2019sgd} which does not require knowledge of the last iterate of the algorithm. Using this step size, we prove that SGD converges sublinearly at a rate of $\cO(1/k)$ for PL functions 
without any additional bounded gradient of bounded variance assumption or growth assumption.

\textbf{Step-size selection for SGD.} The most important parameter that one should select to guarantee the convergence of SGD is the step-size or learning rate. There are several choices that one can use including constant step-size \citep{moulines2011non,needell2014stochastic,gower2019sgd, batchSGDNW16, pmlr-v80-nguyen18c}, decreasing step-size \citep{robbins1951stochastic, ghadimi2013stochastic, gower2019sgd,Nemirovski-Juditsky-Lan-Shapiro-2009, karimi2016linear} and adaptive step-size \cite{duchi2011adaptive, liu2019variance, kingma2014adam, bengio2015rmsprop, vaswani2019painless, ward2019adagrad}.

\noindent \textit{Contributions.}  We provide convergence theorems for SGD under several step-size rules  for minimizing quasar-convex functions and functions satisfying the PL condition, including constant and decreasing step-sizes and a recently introduced adaptive learning rate called the stochastic Polyak step-size~\citep{loizou2020stochastic}.

\textbf{Over-parameterized models and Interpolation.} Recently it was shown that SGD converges considerably faster when the underlying model is sufficiently over-parameterized as to interpolate the data. This includes problems such as deep matrix factorization~\citep{rolinek2018l4, rahimi2017reflections}, binary classification using kernels~\citep{loizou2020stochastic}, consistent linear systems \citep{gower2015randomized,richtarik2020stochastic,loizou2017momentum,loizou2019convergence} and multi-class classification using deep networks~\citep{vaswani2018fast, loizou2020stochastic}.

\noindent \textit{Contributions.} As a corollary of our main theorems we show that for models that interpolate the training data, we can further relax our assumptions, dispense of the \ref{eq:expresidual} condition altogether and instead,  simply assume that each $f_i$ is smooth.  Our results here match the  state-of-the-art convergence results~\citep{vaswani2018fast} but again under strictly weaker assumptions.

\subsection{SGD and Arbitrary Sampling}
We assume we are given access to unbiased estimates $g(x) \in \R^d$ of the gradient such that $\E{g(x)}  = \nabla f(x).$ 
For example, we can use a minibatch to form an estimate of the gradient such as $g(x) = \frac{1}{b}\sum_{i\in B}\nabla f_i(x),$
where $B \subset \{1,\ldots, n\}$ will be chosen uniformly at random and $|B|=b.$ 
To allow for any form of minibatching we use the \emph{arbitrary sampling} notation
\begin{equation}
g(x) = \nabla f_v(x) \eqdef \frac{1}{n} \sum _{i=1}^n v_i \nabla f_i(x),
\end{equation}
where $v\in\R^n_+$ is a random \emph{sampling vector} such that $\E{v_i}  = 1, \,\mbox{for }i=1,\ldots, n$ and $f_v(x)~\eqdef~\frac{1}{n}\sum_{i=1}^n v_i f_i(x)$.
Note that it follows immediately from this definition of sampling vector that $\E{g(x)} =\frac{1}{n} \sum _{i=1}^n \E{v_i} \nabla f_i(x) = \nabla f(x).$ 
In this work we mostly focus on the $b$--minibatch sampling, however we highlight that our analysis holds for every form of minibatching. 
\begin{definition}[Minibatch sampling]\label{def:minibatch}
Let $b \in [n]$. We say that $v \in \R^n$ is a $b$--minibatch sampling if
for every subset $S \in [n]$ with $|S| =b$ we have that
\[\Prob{v=\frac{n}{b}\sum_{i \in S} e_i}=\left.1 \right/\binom{n}{b} \eqdef \frac{b!(n-b)!}{n!}\]
\end{definition}
By using a double counting argument you can show that if $v$ is a $b$--minibatch sampling, it is also a valid sampling vector ($\E{v_i}  = 1$)~\citep{gower2019sgd}. See~\cite{gower2019sgd} for other choices of sampling vectors $v$.

With an unbiased estimate of the gradient $g(x)$, we can now use 
Stochastic gradient descent (SGD)  to solve~\eqref{MainProb} by sampling $g(x^k)$ i.i.d and  iterating
 \begin{eqnarray}  
\boxed{ x^{k+1} =  x^k- \gamma^k g(x^k)}  \label{eq:sgdstep}
\end{eqnarray} 
We also make the following mild assumption on the  gradient noise. 
\begin{assumption} \label{ass:grad-noise} 
The {\em gradient noise} $\sigma^2$ is finite.
 $$\sigma^2 \eqdef  \sup_{x^* \in \cX^*} \E{\norm{g(x^*)}^2} < \infty .$$
\end{assumption}

\section{CLASSES OF STRUCTURED NONCONVEX FUNCTIONS}
\label{SectionClassesFunctions}
We work with two classes of nonconvex problems: the quasar-convex functions and the functions that satisfy the Polyak-Lojasiewicz (PL) condition. 
\begin{definition}[Quasar  convex]
Let $\zeta \in (0, \,1]$ and  $x^* \in\cX^*$. We say that $f$ is $\zeta$- quasar-convex with respect to $x^*$ if for all $x \in \R^n$,
\begin{equation}
\label{eq:quasar-convex}
f(x^*) \geq f(x) +\frac{1}{\zeta} \langle \nabla f(x), x^*-x\rangle.
\end{equation}

\end{definition}
For shorthand we write $f \in QC(\zeta)$ to mean~\eqref{eq:quasar-convex}. The class of quasar-convex functions are parameterized by a positive constant $\zeta \in (0,\, 1]$. In the case that $\zeta=1$ then~\eqref{eq:quasar-convex}  is known as star convexity~\citep{nesterov2006cubic} (generalization of convexity). One can think of $\zeta$ as the value that controls the non-convexity of the function. As  $\zeta$ becomes smaller the function becomes ``more nonconvex" \citep{hinder2019near}.

One of weakest possible assumptions that guarantee a global convergence of gradient descent to the global minimum is the PL condition \citep{karimi2016linear}. Indeed, all local minimas of a function satisfying  the PL condition are also global minimas.
\begin{definition}[Polyak-Lojasiewicz (PL) Condition]
There exists $\mu >0$ such that
\begin{equation}\label{eq:PL}
\|\nabla f(x)\|^2 \geq 2\mu \left[f(x)-f^*\right]
\end{equation}
We write $f \in PL(\mu)$ if function $f$ satisfies~\eqref{eq:PL}.
\end{definition}
In addition we will also consider in several corollaries the following interpolation condition.
\begin{assumption}\label{ass:over}
We say that the interpolation condition holds if there exists $x^* \in \cX^*$ such that
 \begin{equation} \label{eq:interpolated} \min_{x\in\R^n} f_i(x) = f_i(x^*) \quad \mbox{for}\quad i=1,\ldots, n.\end{equation}
 \end{assumption}
  This interpolation condition has drawn much attention recently because many overparametrized deep neural networks
achieve a zero loss over all training data points~\citep{ma2018power} and thus  satisfy~\eqref{eq:interpolated}.

\section{EXPECTED RESIDUAL (ER)}
\label{SectionESER}
In all of our analysis of SGD we rely on the
   \emph{Expected Residual (ER)} assumption. In this section we formally define ER, provide new sufficient conditions for it to hold and relate it to the existing gradient assumptions.

ER measures how far the gradient estimate $g(x)$ is from the true gradient in the following sense.
\begin{assumption}[Expected residual]\label{ass:expresidual} 
We say that the \ref{eq:expresidual} condition holds or $g \in \text{ER}(\rho)$ if
\begin{align*}
\label{eq:expresidual}
\E{\norm{g(x) -g(x^*) - ( \nabla f(x)-\nabla f(x^*)) }^2} \\  \leq 2\rho\left(f(x)-f(x^*) \right), \quad \forall x \in \R^d.\tag{ER}
\end{align*}
\end{assumption}
Note that \ref{eq:expresidual} depends on both how $g(x)$ is sampled and the properties of the $f(x)$ function.

As a direct consequence of Assumption~\ref{ass:expresidual} we have the following bound on the variance of $g(x).$
\begin{lemma}
\label{lem:varbndrho}
If $g \in  \text{ER}(\rho)$ then 
\begin{equation}
\label{eq:varbndrho2}
\E{\|g (x)\|^2 } \leq  4  \rho ( f(x)-f^* ) + \|\nabla f (x)\|^2 +2 \sigma^2.
\end{equation}
\end{lemma}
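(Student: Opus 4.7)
The plan is to bound $\Exp[\|g(x)\|^2]$ by combining the standard bias-variance decomposition with the \ref{eq:expresidual} condition and Assumption~\ref{ass:grad-noise}. Since $g$ is an unbiased estimator of $\nabla f$, the random vector $g(x)-\nabla f(x)$ has mean zero, so
\[
\Exp\bigl[\|g(x)\|^2\bigr] \;=\; \Exp\bigl[\|g(x)-\nabla f(x)\|^2\bigr] + \|\nabla f(x)\|^2.
\]
This exactly produces the $\|\nabla f(x)\|^2$ term in the conclusion, so the whole task reduces to controlling the variance $\Exp[\|g(x)-\nabla f(x)\|^2]$.

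Next I would fix any $x^* \in \cX^*$ and rewrite
\[
g(x)-\nabla f(x) \;=\; \bigl[g(x)-g(x^*)-\nabla f(x)+\nabla f(x^*)\bigr] + \bigl[g(x^*)-\nabla f(x^*)\bigr],
\]
using $\nabla f(x^*)=0$ so that the second bracket is just $g(x^*)$. Applying $\|a+b\|^2 \leq 2\|a\|^2+2\|b\|^2$ and then taking expectations gives
\[
\Exp\bigl[\|g(x)-\nabla f(x)\|^2\bigr] \leq 2\,\Exp\bigl[\|g(x)-g(x^*)-(\nabla f(x)-\nabla f(x^*))\|^2\bigr] + 2\,\Exp\bigl[\|g(x^*)\|^2\bigr].
\]
The first term is bounded by $4\rho(f(x)-f^*)$ by the \ref{eq:expresidual} assumption, and the second by $2\sigma^2$ via Assumption~\ref{ass:grad-noise} (taking the supremum over $x^* \in \cX^*$ if needed, though any fixed minimizer suffices). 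Plugging these two bounds back into the variance decomposition yields the claimed inequality.

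There is no real obstacle here; the only subtlety is making sure to separate out the zero-mean noise before splitting so that the final coefficient on $\|\nabla f(x)\|^2$ is $1$ rather than $2$ — a naive decomposition $g(x) = [g(x)-g(x^*)-(\nabla f(x)-\nabla f(x^*))] + (\nabla f(x)-\nabla f(x^*)) + g(x^*)$ followed by triangle inequality would give a weaker constant. Using the exact bias-variance identity first is what gives the tight form stated in the lemma.
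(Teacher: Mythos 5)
Your proof is correct and follows essentially the same route as the paper: the bound $\norm{g(x)-\nabla f(x)}^2 \leq 2\norm{g(x)-g(x^*)-(\nabla f(x)-\nabla f(x^*))}^2 + 2\norm{g(x^*)}^2$ (using $\nabla f(x^*)=0$), the \ref{eq:expresidual} condition and Assumption~\ref{ass:grad-noise} for the two terms, and the bias--variance identity $\E{\norm{g(x)}^2} = \E{\norm{g(x)-\nabla f(x)}^2} + \norm{\nabla f(x)}^2$. The only difference is ordering --- you invoke the bias--variance identity first and the paper last --- and your closing remark about why this ordering keeps the coefficient on $\norm{\nabla f(x)}^2$ equal to $1$ is a correct observation.
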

It is this bound on the variance~\eqref{eq:varbndrho2} that we use in our proofs and allows us to avoid the stronger bounded gradient or bounded variance assumptions.

\textbf{Connections to other Assumptions.} Let us provide some more familiar sufficient conditions which guarantee that the \ref{eq:expresidual} condition holds. In doing so, we will also provide simple and informative bounds on the  expected residual constant $\rho$ when using minibatching.

We say that $f_i$ is $L_i$--smooth if $\forall x,z\in\R^d$ holds that:
\begin{align} \label{eq:Limain}
 f_i(z) -f_i(x) &\;\leq\;  \dotprod{\nabla f_i(x), z-x} +\frac{L_i}{2}\norm{z-x}^2.
\end{align}
Let $L_{\max} \eqdef \max_{i=1,\ldots, n} L_i. $ For $x^* \in \cX^*$, we say that $f_i$ is $x^*$--convex if
\begin{align}
f_i(x^*) -f_i(x) & \;\leq\;  \dotprod{\nabla f_i(x^*), x^*-x}, \quad  \forall x\in\R^d.\label{eq:convmain}
\end{align}
These two assumptions are sufficient for the $ER(\rho)$ condition to hold and give a useful bound on $\rho$, as we show in the following proposition.

\begin{proposition}
\label{prop:bniceconst} Let $v$ be a sampling vector.
If $f_i$ is $L_{i}$--smooth  and there exists $x^*\in\cX^*$ such that $f_i$ is $x^*$--convex  then  $g \in \text{ER}(\rho)$.  
If in addition $v$ is the $b$--minibatch sampling then 
\begin{equation} \label{eq:bniceconst}
\rho(b)=L_{\max} \frac{n-b}{(n-1)b}, \quad \sigma^2(b)=\frac{1}{b} \frac{n-b}{n-1} \sigma_1^2,
\end{equation}
where $\sigma_1^2 \eqdef   \sup_{x^* \in \cX^*} \frac{1}{n} \sum_{i=1}^n \norm{\nabla f_i(x^*)}^2.$
\end{proposition}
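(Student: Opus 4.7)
The plan is to prove the proposition in two stages: first, establish \ref{eq:expresidual} for a generic sampling vector $v$ under the smoothness and $x^*$--convexity hypotheses, expressing the constant $\rho$ in terms of the covariance structure of $v$; second, specialize the covariance calculation to the $b$--minibatch sampling to recover the stated closed forms for $\rho(b)$ and $\sigma^2(b)$.

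For the first stage, let $h_i \eqdef \nabla f_i(x)-\nabla f_i(x^*)$, so that
\[
g(x)-g(x^*)-(\nabla f(x)-\nabla f(x^*)) \;=\; \frac{1}{n}\sum_{i=1}^n (v_i-1) h_i .
\]
Taking the expected squared norm turns it into a double sum over $\mathrm{Cov}(v_i,v_j)\langle h_i, h_j\rangle$. The crucial pointwise bound on $\sum_i\|h_i\|^2$ comes from the auxiliary function $\phi_i(x) \eqdef f_i(x)-f_i(x^*)-\langle \nabla f_i(x^*), x-x^*\rangle$: by $x^*$--convexity it is nonnegative, by $L_i$--smoothness of $f_i$ it is $L_i$--smooth, and $\nabla\phi_i = \nabla f_i(\cdot)-\nabla f_i(x^*)$. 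The standard descent-lemma argument (minimize the quadratic upper bound of $\phi_i$ and use $\phi_i\geq 0$) then yields $\|h_i\|^2 = \|\nabla\phi_i(x)\|^2 \leq 2L_i\phi_i(x)$. Summing in $i$, using $\sum_i \nabla f_i(x^*) = n\nabla f(x^*) = 0$, and pulling out $L_{\max}$ gives
\[
\sum_{i=1}^n \|h_i\|^2 \;\leq\; 2 L_{\max} n\, (f(x)-f(x^*)).
\]

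For the second stage, I would plug in the $b$--minibatch sampling and directly compute $\mathbb{E}[v_i]=1$, $\mathbb{E}[v_i^2]=n/b$, and $\mathbb{E}[v_iv_j] = \frac{n(b-1)}{b(n-1)}$ for $i\neq j$, giving $\mathrm{Var}(v_i)=\frac{n-b}{b}$ and $\mathrm{Cov}(v_i,v_j)=-\frac{n-b}{b(n-1)}$. Substituting into the double sum produces a telescoping simplification
\[
\mathbb{E}\Bigl\|\tfrac{1}{n}\sum_i (v_i-1) h_i\Bigr\|^2 = \tfrac{n-b}{n^2 b (n-1)}\Bigl[n\sum_i \|h_i\|^2 - \bigl\|\sum_i h_i\bigr\|^2\Bigr] \leq \tfrac{n-b}{n\,b(n-1)}\sum_i \|h_i\|^2,
\]
and combining with the bound from stage one yields $\rho(b) = L_{\max}\frac{n-b}{(n-1)b}$.

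The formula for $\sigma^2(b)$ is then essentially a byproduct: since $\nabla f(x^*)=0$, we have $\mathbb{E}\|g(x^*)\|^2 = \mathbb{E}\|g(x^*)-\nabla f(x^*)\|^2$, which is the same variance identity as above applied to the vectors $a_i = \nabla f_i(x^*)$. Because $\sum_i a_i = n\nabla f(x^*)=0$, only the diagonal variance term survives, giving $\frac{n-b}{n b (n-1)}\sum_i\|\nabla f_i(x^*)\|^2 = \frac{1}{b}\frac{n-b}{n-1}\sigma_1^2$ after taking the supremum over $x^*\in\cX^*$. I expect the main obstacle to be the bookkeeping in the covariance identity and in particular verifying that the negative cross-covariance of $b$--minibatch sampling is exactly what cancels the $\|\sum_i h_i\|^2$ term; the rest is a direct assembly of known inequalities.
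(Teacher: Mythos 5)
Your proposal is correct, and its core coincides with the paper's argument: the paper likewise writes the residual as $\frac{1}{n}\left(DF(x)-DF(x^*)\right)(v-\mathbf{1})$ and uses exactly your key inequality $\|\nabla f_i(x)-\nabla f_i(x^*)\|^2\leq 2L_i\left(f_i(x)-f_i(x^*)-\langle\nabla f_i(x^*),x-x^*\rangle\right)$ (Lemma~\ref{lem:smoothconvexaroundxst}), summed over $i$ with $\nabla f(x^*)=0$ to get the bound $2nL_{\max}(f(x)-f^*)$. Where you genuinely diverge is the last stage: the paper bounds the resulting quadratic form by $\frac{1}{n^2}\lambda_{\max}\left(\mathbb{E}\left[(v-\mathbf{1})(v-\mathbf{1})^\top\right]\right)\sum_i\|h_i\|^2$, valid for any independent sampling, and then cites \cite{gower2019sgd} and \cite{sebbouh2019towards} for the explicit minibatch values of $\rho(b)$ and $\sigma^2(b)$, whereas you compute the minibatch covariances ($\mathrm{Var}(v_i)=\tfrac{n-b}{b}$, $\mathrm{Cov}(v_i,v_j)=-\tfrac{n-b}{b(n-1)}$) and obtain the exact identity $\mathbb{E}\bigl\|\tfrac1n\sum_i(v_i-1)h_i\bigr\|^2=\tfrac{n-b}{n^2b(n-1)}\bigl[n\sum_i\|h_i\|^2-\bigl\|\sum_i h_i\bigr\|^2\bigr]$, from which $\rho(b)$ follows by dropping the nonpositive term, and $\sigma^2(b)$ follows by applying the same identity to $a_i=\nabla f_i(x^*)$ with $\sum_i a_i=0$. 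These are the same estimate in disguise, since the minibatch covariance matrix equals $\tfrac{n(n-b)}{b(n-1)}\left(I-\tfrac1n\mathbf{1}\mathbf{1}^\top\right)$, whose top eigenvalue reproduces the paper's constant in \eqref{eq:expresidualest}; but your route buys a fully self-contained derivation of \eqref{eq:bniceconst}, with $\sigma^2(b)$ as an exact equality rather than an imported fact, while the paper's route buys generality over all independent samplings. The only point left implicit in your first stage is how the generic-$v$ claim $g\in \text{ER}(\rho)$ is closed: after writing the double sum over $\mathrm{Cov}(v_i,v_j)\langle h_i,h_j\rangle$ you still need one line bounding it by $\lambda_{\max}$ of the (assumed finite) covariance matrix of $v$ times $\sum_i\|h_i\|^2$ (or, as the paper alternatively does, passing through expected smoothness of $f_v$); with that line added, the argument is complete.
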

The bounds in Proposition~\ref{prop:bniceconst} have been proven before but under the stronger assumption that each $f_i$ is convex\footnote{See  Proposition 3.10 item (iii) in \cite{gower2019sgd} and Lemma  F.3  in~\cite{sebbouh2019towards}.}. In this work by dropping the requirement that each $f_i$ is convex we are able to consider interesting classes of nonconvex functions.

Indeed, the following theorem establishes that only smoothness and the interpolation condition are sufficient for the \ref{eq:expresidual} to hold.  Furthermore, we place the \ref{eq:expresidual} within a hierarchy of the following  assumptions used in analysing SGD for smooth nonconvex functions:

\emph{SGC: Strong Growth Condition} ( $\rho_{SGC}>0$) 
\begin{equation}\label{eq:SGC-main}
\E{\norm{g(x)}^2} \leq \rho_{SGC}\norm{\nabla f(x)}^2.
\end{equation}
\emph{WGC: Weak Growth Condition}($\rho_{WGC}>0$) 
\begin{equation}\label{eq:WGC-main}
\E{\norm{g(x)}^2} \leq 2\rho_{WGC}(f(x) -f(x^*)).
\end{equation}
\emph{ES: Expected Smoothness} ($\cL>0$)
\begin{equation}\label{eq:ES-main}
\E{\norm{g(x) -g(x^*)}^2} \leq 2\cL(f(x) -f(x^*)).
\end{equation}

Next in Theorem~\ref{theo:hierarchy} we show that the \ref{eq:expresidual} condition is (strictly) the weakest condition from the above list. 

\begin{theorem} \label{theo:hierarchy}
 Let  $ ES$, $WGC$ and $SGC$ denote Assumption 2.1 in~\cite{gower2019sgd}, Eq (7)  and Eq (2) in~\cite{vaswani2018fast}, respectively. Let  $L_i$  and $x^*$--convex abbreviate~\eqref{eq:Limain} and~\eqref{eq:convmain}, respectively. Then the following hierarchy holds,
\begin{tikzcd}
\boxed{SGC+L\mbox{--smooth}} \arrow[d]  \\
 \boxed{WGC}  \arrow[d] &  \boxed{L_i+\mbox{Interpolated}}  \arrow[d]\\
  \boxed{ES} \arrow[d] &  \boxed{ L_i+x^*\mbox{--convex} } \arrow[l]\\
  \boxed{\colorbox{blue!20}{ER}}
\end{tikzcd}
where  $L$--smooth is shorthand for function $f$ being $L$--smooth.
Finally, there are problems such that ER  holds and  ES \emph{does not} hold. Making ER the strictly weakest assumption among the above.
\end{theorem}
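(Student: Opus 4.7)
The plan is to verify each arrow of the diagram as a short implication and then exhibit an explicit problem that separates ER from ES. The three arrows in the left column --- SGC with $L$-smoothness $\Rightarrow$ WGC $\Rightarrow$ ES $\Rightarrow$ ER --- each reduce to a one-line manipulation. For SGC $\Rightarrow$ WGC, I would combine the SGC inequality with the standard consequence of smoothness at a minimizer, $\|\nabla f(x)\|^2 \leq 2L(f(x)-f^*)$, to obtain $\E{\|g(x)\|^2}\leq 2L\rho_{SGC}(f(x)-f^*)$. For WGC $\Rightarrow$ ES, evaluating WGC at $x = x^*$ forces $\E{\|g(x^*)\|^2}=0$, hence $g(x^*)=0$ almost surely, so $\E{\|g(x) - g(x^*)\|^2} = \E{\|g(x)\|^2}\leq 2\rho_{WGC}(f(x)-f^*)$. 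For ES $\Rightarrow$ ER, the random vector $g(x)-g(x^*)$ has mean $\nabla f(x)-\nabla f(x^*)$ by unbiasedness, and its variance is always bounded by its second moment, so the ER residual is dominated by the ES quantity with the same constant $\rho \leq \cL$.

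For the right column, $L_i$-smoothness plus interpolation trivially implies $x^*$-convexity of each $f_i$: interpolation forces $\nabla f_i(x^*)=0$ (since smooth $f_i$ attains its minimum at $x^*$), and the $x^*$-convexity inequality then reduces to $f_i(x^*)-f_i(x) \leq 0$, which is precisely the minimizer condition. The remaining arrow ``$L_i$-smooth $+$ $x^*$-convex $\Rightarrow$ ES'' follows from the same arbitrary-sampling averaging as in Proposition~\ref{prop:bniceconst}: applied to the auxiliary function $\phi_i(x) \eqdef f_i(x)-f_i(x^*)-\langle \nabla f_i(x^*), x-x^*\rangle$, which is nonnegative by $x^*$-convexity and $L_i$-smooth, the standard ``$L$-smooth at a minimizer'' bound yields $\|\nabla f_i(x)-\nabla f_i(x^*)\|^2 \leq 2L_i \phi_i(x)$; averaging under $v$ with $\E{v_i}=1$ then produces ES. Proposition~\ref{prop:bniceconst} already executes this bookkeeping en route to ER, so I would invoke it directly.

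For strictness I take $n=1$, $f(x)=(x^2-1)^2$ on $\R$, with the deterministic oracle $g(x)=\nabla f(x)$. Then the ER residual is identically zero, so ER holds trivially with $\rho=0$. However $f$ has minimizers $x^*\in\{-1,+1\}$ with $f^*=0$, while $\|\nabla f(x)\|^2 = 16 x^2(x^2-1)^2 = \Theta(x^6)$ and $f(x)-f^* = \Theta(x^4)$ as $|x|\to\infty$, so no finite $\cL$ can satisfy $\|\nabla f(x)\|^2 \leq 2\cL(f(x)-f^*)$ globally. Hence ES fails while ER holds. The conceptual point is that ER controls only the \emph{variance} of $g(x)-g(x^*)$, whereas ES controls its full \emph{second moment}; the gap between them is exactly $\|\nabla f(x)-\nabla f(x^*)\|^2$, which can be arbitrarily large relative to $f(x)-f^*$ when $f$ is nonconvex. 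The only mildly delicate step is the arbitrary-sampling averaging reused from Proposition~\ref{prop:bniceconst}; every other piece is one line.
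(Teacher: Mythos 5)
Your proposal is correct and follows essentially the same route as the paper's proof: each arrow is handled exactly as in the appendix (smoothness at a minimizer for SGC$\Rightarrow$WGC, evaluating WGC at $x^*$ to force $g(x^*)=0$ for WGC$\Rightarrow$ES, variance-bounded-by-second-moment for ES$\Rightarrow$ER, interpolation giving $\nabla f_i(x^*)=0$ for the right column, and the smoothness-at-a-minimizer bound on $\phi_i$ averaged through the sampling vector for $L_i+x^*$-convex $\Rightarrow$ ES), and your separating example $f(x)=(x^2-1)^2$ with the deterministic oracle plays the same role as the paper's $f(x)=x^4$: for full-batch sampling ER is trivially zero while $\|\nabla f(x)\|^2$ outgrows $f(x)-f^*$, so no finite $\cL$ exists. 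The only cosmetic point is that you invoke Proposition~\ref{prop:bniceconst} (stated only for ER) for the ES arrow; the statement actually needed is its appendix expansion, Proposition~\ref{prop:master_lemma}, which makes your ``averaging'' step rigorous by applying the bound to $f_v$ itself (which is $L_v$-smooth and $x^*$-convex), thereby avoiding any issue with cross terms in $\bigl\|\tfrac{1}{n}\sum_i v_i(\nabla f_i(x)-\nabla f_i(x^*))\bigr\|^2$.
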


The important assumptions for analyzing SGD in the nonconvex setting are the ones that are downstream from $ L_i+\mbox{Interpolated}$. This is because there exists a rich class of nonconvex functions that are smooth and satisfy the interpolation condition. In contrast, the WGC is only known to hold for smooth and convex functions satisfying the interpolation assumptions (Proposition 2 in~\cite{vaswani2018fast}).

An important distinction between the ES~\eqref{eq:ES-main} and the~\ref{eq:expresidual} condition, is that~\eqref{eq:expresidual} always holds trivially for full batch sampling ($g(x) = \nabla f(x)$). In contrast ES may not hold. We found that this simple fact prevented us from obtaining the correct rates of convergence of SGD in the full batch setting (see Appendix~\ref{sec:convexpsmooth}).

In concurrent work,~\cite{Khaled-nonconvex-2020} propose an analysis of SGD for general smooth non-convex functions (and functions satisfying the PL condition\footnote{Under different step-size selection than the one we propose in our Theorems for PL functions.}) under the following ABC condition:
\paragraph{ABC.} Let $A,B,C\geq0$. We say that $ABC$ condition holds if
\begin{equation}\label{eq:ABCpaper}
\small{\E{\norm{g(x)}^2} \leq 2A(f(x) -f(x^*) +B\norm{\nabla f(x)}^2 + C.}
\end{equation}
We note that by properly choosing the constants $A$, $B$ and $C$ in the ABC condition we can recover the assumptions SGC, WGC, and ES appearing in Theorem~\ref{theo:hierarchy}. In Appendix~\ref{sec:hierarchy} we show how condition~\eqref{eq:varbndrho2} which is a consequence of~\ref{eq:expresidual} is also a special case of the ABC assumption. 

\section{CONVERGENCE ANALYSIS}
\label{SectionConvergenceAnalysis}
In this section, we present the main convergence results. 
Proofs of all key results can be found in the Appendix~\ref{AppendixProofs}. In Appendix~\ref{AppendixTheoryAdditional}, we present additional convergence results on quasar-strongly convex functions (Section~\ref{sec:strongquasar}) and on convergence under expected smoothness (Section~\ref{sec:convexpsmooth}).
\subsection{Quasar Convex functions}
\subsubsection{Constant and Decreasing Step-sizes}
Now we present our results for quasar-convex functions for SGD with a constant, finite horizon and  decreasing step sizes.

\begin{theorem} \label{theo:master-quasar-convex-res}
Assume $f(x)$ is $L$--smooth, $\zeta-$quasar-convex with respect to $x^*$ and $g \in ER(\rho)$. Let $0<\gamma_k<\frac{\zeta}{2\rho + L}$ for all $k \in \N$ and let $r_0 \eqdef \|x^0 - x^*\|^2$. Then iterates of SGD given by \eqref{eq:sgdstep} satisfy:
\begin{multline}\label{eq:master-quasar-convex-res}
\min_{t=0,\dots,k-1} \E{f(x^t) -f(x^*)} \\ \leq \frac{1}{\sum_{i=0}^{k-1}\gamma_i(\zeta - \gamma_i(2\rho + L))} \left[\frac{r^0}{2} + \sigma^2 \sum_{t=0}^{k-1}\gamma_t^2 \right].
\end{multline}
Moreover, for  $\gamma < \frac{\zeta}{2\rho + L}$ we have that\\
{\bf 1.} 
	If $\forall k \in \N, \; \gamma_k = \gamma \equiv \frac{1}{2} \frac{\zeta}{(2\rho + L)}$ then $\forall k \in \N$,
		\begin{equation}\label{eq:master-quasar-const}
\min_{t=0,\dots,k-1}\E{f(x^t) - f(x^*)} \leq 2r_0\frac{2\rho + L}{ \zeta^2 k} + \frac{\sigma^2}{2\rho + L} .
	\end{equation}
{\bf 2.} Suppose SGD~\eqref{eq:sgdstep} is run for $T$ iterations. If $\forall k=0,\dots,T-1, \; \gamma_k = \frac{\gamma}{\sqrt{T}}$ then
	\begin{equation}
	\label{canojaksdk}
\min_{t=0,\dots,T-1}\E{f(x^t) - f(x^*)} \leq \frac{r_0 + 2\gamma^2\sigma^2}{\gamma\sqrt{T}}.
	\end{equation}
{\bf 3.}  If $\forall k \in \N, \; \gamma_k = \frac{\gamma}{\sqrt{k+1}}$ then $\forall k \in \N$,
		\begin{multline}\label{eq:quasarconvdecrease}
\min_{t=0,\dots,k-1}\E{f(x^t) - f(x^*)} \\ \quad \leq  \frac{1}{4\gamma}\frac{r_0 + 2\gamma^2\sigma^2(\log(k)+1)}{ \zeta(\sqrt{k} - 1)-\gamma(\rho + L/2)(\log(k) + 1)},
	\end{multline}
which is a convergence  rate of~$\cO\left(\frac{\log(k)}{\sqrt{k}}\right)$.
\end{theorem}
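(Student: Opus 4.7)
The plan is a standard Lyapunov analysis tracking $r_k\eqdef\E{\norm{x^k-x^*}^2}$ and deriving a one-step recursion. First I would expand
\[
\norm{x^{k+1}-x^*}^2 = \norm{x^k-x^*}^2 - 2\gamma_k\dotprod{g(x^k),x^k-x^*} + \gamma_k^2\norm{g(x^k)}^2,
\]
take conditional expectation, and use unbiasedness together with $\zeta$-quasar-convexity~\eqref{eq:quasar-convex} to obtain the lower bound $\E{\dotprod{g(x^k),x^k-x^*}\mid x^k}= \dotprod{\nabla f(x^k),x^k-x^*}\geq \zeta\,(f(x^k)-f(x^*))$. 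For the second-moment term I would invoke Lemma~\ref{lem:varbndrho} (the direct consequence of~\ref{eq:expresidual}) giving $\E{\norm{g(x^k)}^2}\leq 4\rho(f(x^k)-f^*)+\norm{\nabla f(x^k)}^2+2\sigma^2$, and then use $L$-smoothness of $f$ together with the assumed lower-boundedness of $f$ to deduce $\norm{\nabla f(x^k)}^2\leq 2L(f(x^k)-f^*)$.

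Combining these ingredients and taking total expectation yields the one-step inequality
\[
r_{k+1} \leq r_k - 2\gamma_k\bigl(\zeta-\gamma_k(2\rho+L)\bigr)\E{f(x^k)-f^*} + 2\gamma_k^2\sigma^2,
\]
which is the master recursion. The step-size constraint $\gamma_k<\zeta/(2\rho+L)$ ensures the coefficient on the function gap is strictly positive. Telescoping from $t=0$ to $k-1$, dropping the nonnegative $r_k$ from the left-hand side, and using $\min_t a_t\leq \sum_t w_t a_t/\sum_t w_t$ with weights $w_t=\gamma_t(\zeta-\gamma_t(2\rho+L))$ produces exactly~\eqref{eq:master-quasar-convex-res}.

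Parts 1--3 then follow as bookkeeping from the master bound by specializing $\gamma_k$ and controlling $\sum_t \gamma_t(\zeta-\gamma_t(2\rho+L))$ and $\sum_t \gamma_t^2$. For the constant step $\gamma_k=\zeta/(2(2\rho+L))$, every weight $w_t$ equals $\zeta^2/(4(2\rho+L))$ and every $\gamma_t^2$ equals $\zeta^2/(4(2\rho+L)^2)$, so both sums are linear in $k$ and~\eqref{eq:master-quasar-const} falls out by direct substitution. For the finite-horizon step $\gamma_k=\gamma/\sqrt{T}$, the denominator becomes $\gamma(\zeta\sqrt{T}-\gamma(2\rho+L))$, which is lower bounded using $\gamma(2\rho+L)<\zeta$ to obtain~\eqref{canojaksdk}. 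For the decreasing step $\gamma_k=\gamma/\sqrt{k+1}$, the standard estimates $\sum_{t=0}^{k-1}(t+1)^{-1}\leq \log(k)+1$ and $\sum_{t=0}^{k-1}(t+1)^{-1/2}\geq 2(\sqrt{k}-1)$ give the $\cO(\log(k)/\sqrt{k})$ rate in~\eqref{eq:quasarconvdecrease}.

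The only genuinely subtle point—the closest thing to an obstacle—is assembling the coefficient on $(f(x^k)-f^*)$ correctly: a favorable contribution $2\gamma_k\zeta(f(x^k)-f^*)$ comes from quasar-convexity, while two unfavorable contributions $\gamma_k^2\cdot 4\rho(f(x^k)-f^*)$ from~\ref{eq:expresidual} and $\gamma_k^2\cdot 2L(f(x^k)-f^*)$ from smoothness must be subtracted. Only after collecting all three into the net coefficient $2\gamma_k(\zeta-\gamma_k(2\rho+L))$ does the admissible step-size range emerge. The rest is algebra on the sums $\sum\gamma_t$ and $\sum\gamma_t^2$.
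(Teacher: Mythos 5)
Your proposal is correct and follows essentially the same route as the paper: the same expansion of $\norm{x^{k+1}-x^*}^2$, quasar-convexity for the inner product, the bound $\E{\norm{g(x^k)}^2}\leq 2(2\rho+L)(f(x^k)-f^*)+2\sigma^2$ obtained from Lemma~\ref{lem:varbndrho} plus $\norm{\nabla f(x^k)}^2\leq 2L(f(x^k)-f^*)$, the telescoped weighted-minimum argument giving~\eqref{eq:master-quasar-convex-res}, and the same integral estimates for the three step-size specializations. The only cosmetic remark is that $\norm{\nabla f(x)}^2\leq 2L(f(x)-f^*)$ follows from $L$-smoothness together with $x^*$ being a minimizer (Lemma~\ref{lem:smoothsubopt}), not from lower-boundedness of $f$ per se.
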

To the best of our knowledge, the only prior result for the convergence of SGD for smooth quasar-convex functions 
was a finite horizon result similar to~\eqref{canojaksdk} but under the strong assumption of bounded gradient variance~\citep{hardt2018dynam}.
Of particular importance is~\eqref{eq:quasarconvdecrease} which is the first $\cO\left(\log(k)/\sqrt{k}\right)$ any time convergence rate for quasar-convex functions. Indeed, this rate has only been achieved before under the \emph{strictly stronger assumption} that the $f_i$'s are smooth, convex and $g(x)$ has bounded variance~\citep{Nemirovski-Juditsky-Lan-Shapiro-2009}. Indeed, strictly stronger since due to Theorem~\ref{theo:hierarchy} the \ref{eq:expresidual} condition holds when the $f_i$'s are smooth and convex without any bounded gradient assumption.

When considering interpolated functions, we can completely drop the \ref{eq:expresidual} condition due to Theorem~\ref{theo:hierarchy}. In this next corollary we highlight this and show how  the complexity of SGD is affected by increasing the minibatch size.
\begin{corollary} \label{cor:quasar-minib} Let $f$ be $\zeta$-quasar-convex with respect to $x^*$. Let the interpolation Assumption~\ref{ass:over} hold and let each  $f_i$ be $L_i$--smooth.
 If $v$ is a $b$-minibatch sampling and 
 $\gamma_k \equiv \frac{1}{2} \frac{\zeta(n-1)b}{2L_{\max} (n-b)+ L(n-1)b}$ then 
\begin{align} \label{eq:SGDforStarOVerER}
\min_{t=0,\dots,k-1}&\E{f(x^t) - f(x^*)} \nonumber \\ 
\quad \leq &\frac{2L_{\max} (n-b) + L (n-1)b}{ \zeta^2 (n-1)b } \frac{2r_0}{k}.
\end{align}
This shows that $TC(b)$, the \emph{total complexity} as a function of the minibatch size, to bring $\underset{i=1,\dots,k-1}{\min}\E{f(x^i) -f^*} \leq \epsilon$ is given by
\begin{equation}\label{eq:totalcomplexStaroverER}
TC(b)\; \leq \; 
\frac{2 (n-b)L_{\max} +(n-1)b L }{\zeta^2(n-1)} \frac{2r_0}{\epsilon}.
\end{equation}
 Thus the optimal minibatch size $b^*$ that minimizes this total complexity 
is given by
\begin{equation}\label{eq:boptimalStaroverER}
b^*  = 
\begin{cases}
1 & \mbox{if } (n-1) \geq  2\frac{L_{\max}}{L}\\
n & \mbox{if } (n-1) < 2\frac{L_{\max}}{L}.
\end{cases}
\end{equation}
\end{corollary}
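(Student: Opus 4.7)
The plan is to reduce the corollary to the constant step-size case of Theorem~\ref{theo:master-quasar-convex-res} by identifying the appropriate Expected Residual constant $\rho(b)$ and gradient noise $\sigma^2$ induced by the $b$-minibatch sampling under interpolation, and then doing the algebra to derive the iteration complexity, the total complexity, and the optimal minibatch size.

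The first step, which I expect to be the only nonroutine observation, is to show that Assumption~\ref{ass:over} together with $L_i$-smoothness of each $f_i$ implies $x^*$-convexity of each $f_i$ with respect to the common minimizer $x^*$. Indeed, interpolation forces $x^*$ to be a global minimizer of every $f_i$, so $\nabla f_i(x^*)=0$ and $f_i(x^*)\leq f_i(x)$ for all $x$; the $x^*$-convexity inequality~\eqref{eq:convmain} then reduces to $f_i(x^*)-f_i(x)\leq 0$, which is exactly what interpolation provides. Having established $x^*$-convexity for free, I invoke Proposition~\ref{prop:bniceconst} with the $b$-minibatch sampling to conclude that $g\in \text{ER}(\rho(b))$ with $\rho(b)=L_{\max}\tfrac{n-b}{(n-1)b}$. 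Moreover, interpolation gives $\nabla f_i(x^*)=0$ for every $i$, so $\sigma_1^2=0$ and thus $\sigma^2(b)=0$.

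Next, I plug these constants into the constant step-size bound~\eqref{eq:master-quasar-const} from Theorem~\ref{theo:master-quasar-convex-res}. The prescribed step size
\[
\gamma_k \equiv \tfrac{1}{2}\tfrac{\zeta(n-1)b}{2L_{\max}(n-b)+L(n-1)b}
\]
is precisely $\tfrac{1}{2}\cdot\tfrac{\zeta}{2\rho(b)+L}$, so Theorem~\ref{theo:master-quasar-convex-res} applies. Since $\sigma^2=0$, the $\sigma^2/(2\rho+L)$ term vanishes, leaving only the $2r_0(2\rho(b)+L)/(\zeta^2 k)$ term. Substituting $2\rho(b)+L=\tfrac{2L_{\max}(n-b)+L(n-1)b}{(n-1)b}$ and clearing the fraction yields~\eqref{eq:SGDforStarOVerER}.

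For the total complexity, I note that each iteration of SGD under the $b$-minibatch sampling requires $b$ stochastic gradient evaluations, so $TC(b)=b\cdot k(b)$, where $k(b)$ is the iteration count to reach $\epsilon$ accuracy obtained by inverting~\eqref{eq:SGDforStarOVerER}. Multiplying through cancels the factor of $b$ in the denominator and produces~\eqref{eq:totalcomplexStaroverER}. Finally, to find the optimal minibatch size, I observe that $TC(b)$ depends on $b$ only through the affine function $2L_{\max}(n-b)+(n-1)bL$, whose slope in $b$ equals $L(n-1)-2L_{\max}$. The minimizer over $b\in\{1,\dots,n\}$ is therefore $b^*=1$ when this slope is nonnegative, i.e.\ $(n-1)\geq 2L_{\max}/L$, and $b^*=n$ otherwise, giving~\eqref{eq:boptimalStaroverER}.
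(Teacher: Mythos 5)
Your proposal is correct and follows essentially the same route as the paper: invoke Proposition~\ref{prop:bniceconst} to get $\rho(b)=L_{\max}\tfrac{n-b}{(n-1)b}$ and $\sigma^2=0$ under interpolation, plug into the constant step-size bound~\eqref{eq:master-quasar-const} of Theorem~\ref{theo:master-quasar-convex-res}, multiply the iteration count by $b$ for the total complexity, and decide $b^*$ from the sign of the slope of the affine-in-$b$ bound. The only difference is that you explicitly verify that interpolation plus $L_i$-smoothness yields the $x^*$-convexity needed by Proposition~\ref{prop:bniceconst} (the paper leaves this to its hierarchy in Theorem~\ref{theo:hierarchy}), and your slope computation $L(n-1)-2L_{\max}$ is the correct one matching the stated threshold.
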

Specializing~\eqref{eq:SGDforStarOVerER} to the full batch setting ($n=b$), we have that gradient descent (GD) with step size $\gamma = \frac{\zeta}{4L} $ converges as follows\footnote{Here we use that the smoothness of $f$ guarantees that $f(x^1),\ldots, f(x^t)$ for GD is a decreasing sequence.}: $f(x^t) - f(x^*) \leq \frac{2L\norm{x^0 - x^*}^2}{\zeta^2 k}.$ This is exactly the rate given recently for GD for quasar-convex functions in~\cite{Guminov2017}, with the exception that we have a squared dependency on $\zeta$ the quasar-convex parameter.
\subsubsection{Stochastic Polyak Step-size (SPS) - Guarantee Convergence without tuning}
The stochastic Polyak step size (SPS) is a recently proposed adaptive step size selection for SGD \citep{loizou2020stochastic}.
SPS is a natural extension of the classical Polyak step-size~\citep{polyak1987introduction} (commonly used in the deterministic subgradient method) to the stochastic setting.

In this work, we generalize the SPS to the arbitrary sampling regime and provide a novel convergence analysis of SGD with SPS for the class of smooth, quasar (strongly) convex functions.

Let $v$ be a sampling vector and let $f_v=\sum_{i=1}^n f_i(x) v_i$.  Let $f_v^* = \min_{x\in \R^n} f_v(x)$ which we assume exists. Just like the gradient, we have that $f_v$ is an unbiased estimate of $f$. Now given a sampling vector $v$,  we define the \emph{Stochastic Polyak Step-size} (SPS) as
\begin{equation}
\label{SPLRv}
\text{SPS:} \quad \gamma_{k} =\frac{f_v(x^k)-f_v^*}{c \, \|\nabla f_v(x^k)\|^2},
\end{equation}
where $0<c \in \R$.  As explained in~\cite{loizou2020stochastic}, the SPS rule is particularly effective when training over-parameterized models capable of interpolating the training data (when the interpolation Assumption~\ref{ass:over} holds). In this case, SGD with SPS converges to the exact minimum (not to a neighborhood of the solution)~\citep{loizou2020stochastic}. In addition, if $f_i^* \coloneqq \min_{x\in \R^n} f_i(x)$ then for machine learning problems using standard unregularized surrogate loss functions (e.g. squared loss for regression, hinge loss for classification) it holds that $f_i^*=0$~\citep{loizou2020stochastic}.
If on top of this, we assume that interpolation Assumption~\ref{ass:over} holds (that is, $f_i^*=f_i(x^*)$, $\forall i \in [n]$), then we have that $f_i^*=f_v^*=f_v(x^*)=0$ for every $i \in [n]$ and for every $ v$. 

By assuming that every $f_i$ is $L_i$--smooth, we have that  $f_v$ is $L_v$--smooth with $L_v \eqdef \frac{1}{n}\sum_{i=1}^n v_i L_i$. This smoothness combined with Lemma~\ref{lem:smoothsubopt} and Jensen's inequality  gives a lower bound on SPS~\eqref{SPLRv}: 
\begin{align}\label{eq:a8ejh8s434}
\frac{1}{2 c \E{L_v}} \overset{Jensen}{\leq}  \E{\frac{1}{2 c L_{v}} } \leq \E{\gamma_{k}=\frac{f_{v}(x^k)-f_v^*}{c \|\nabla f_v(x^k)\|^2} }.
\end{align}
This lower bound combining with the following new bound allows us to establish the forthcoming theorem for quasar-convex functions.
\begin{lemma}\label{lem:cLmax}
Assume interpolation~\ref{ass:over} holds. Let $f_i$ be $L_i$--smooth and let $v$ be a sampling vector. It follows that there exists $\cL_{\max} > 0 $ such that
\begin{equation} \label{eq:cLmaxbnd}
\frac{1}{2  \cL_{\max}} ( f(x) -f^* )  \;\leq \; \E{\frac{(f_v(x)-f_v^*)^2}{ \|\nabla f_v(x)\|^2}}.
\end{equation}
Furthermore,  for $B \subset\{1,\ldots, n\}$ 
let $L_B$ be the smoothness constant of $f_B \eqdef\frac{1}{b} \sum_{i\in B}  f_i$.  If $v$ is the $b$--minibatch sampling then 
\[\cL_{\max} \; =\; \cL_{\max}(b) \;=\; \underset{i=1,\ldots, n}{ \max} \dfrac{  \binom{n-1}{b-1}}{ \sum_{B: i \in B} L_B^{-1}}.\]
   \end{lemma}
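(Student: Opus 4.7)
The plan is to combine two ingredients: the interpolation assumption (which ensures $x^*$ is the common minimizer of $f_v$, hence $f_v^* = f_v(x^*)$), and the standard ``smoothness gives a suboptimality lower bound'' inequality applied to the random function $f_v$, which is $L_v$-smooth whenever each $f_i$ is $L_i$-smooth and $v_i \ge 0$.

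First I would observe that since $v_i \ge 0$ and each $f_i$ is minimized at $x^*$ by Assumption~\ref{ass:over}, the random function $f_v$ is also minimized at $x^*$, so $f_v^* = f_v(x^*)$. Combining $L_v$-smoothness of $f_v$ with the identity $\min_x f_v(x) = f_v(x^*)$ (via Lemma~\ref{lem:smoothsubopt}) gives $\|\nabla f_v(x)\|^2 \le 2 L_v (f_v(x) - f_v^*)$. Dividing both sides of the square of $(f_v(x)-f_v^*)$ by $\|\nabla f_v(x)\|^2$ yields the pointwise bound
\begin{equation*}
\frac{(f_v(x)-f_v^*)^2}{\|\nabla f_v(x)\|^2} \;\geq\; \frac{f_v(x) - f_v^*}{2 L_v}.
\end{equation*}
Taking expectations, it remains to show $\E{(f_v(x)-f_v^*)/L_v} \ge (f(x)-f^*)/\cL_{\max}$, which will define $\cL_{\max}$.

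For the minibatch specialization, I would substitute $v = \tfrac{n}{b}\sum_{i\in B} e_i$ with $B$ uniform over size-$b$ subsets. Then $f_v = f_B$, $L_v = L_B$, and $f_v^* = f_B(x^*)$ by interpolation. Expanding,
\begin{equation*}
\E{\frac{f_B(x) - f_B(x^*)}{2 L_B}} \;=\; \frac{1}{2 b\binom{n}{b}} \sum_{i=1}^n (f_i(x)-f_i(x^*)) \sum_{B \ni i} \frac{1}{L_B}.
\end{equation*}
Since each summand $f_i(x)-f_i(x^*)$ is nonnegative (interpolation), the sum is bounded below coefficient by coefficient; matching it with $\tfrac{1}{2\cL_{\max}}(f(x)-f^*) = \tfrac{1}{2 n \cL_{\max}}\sum_i (f_i(x)-f_i(x^*))$ forces
\begin{equation*}
\cL_{\max} \;=\; \max_{i} \frac{b\binom{n}{b}/n}{\sum_{B \ni i} 1/L_B} \;=\; \max_{i} \frac{\binom{n-1}{b-1}}{\sum_{B \ni i} 1/L_B},
\end{equation*}
using the standard identity $b\binom{n}{b}/n = \binom{n-1}{b-1}$ (equivalently, the number of size-$b$ subsets containing a given index).

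The only mild subtlety, and the step I expect to require the most care, is justifying the coefficient-wise comparison: because each $f_i(x)-f_i(x^*) \ge 0$ under interpolation, it suffices to ensure the weight on each $i$ in the left-hand sum dominates the uniform weight $1/(n\cL_{\max})$ on the right, which is exactly what the displayed definition of $\cL_{\max}$ achieves. Everything else is a direct calculation from the definition of the $b$-minibatch sampling and the smoothness lemma.
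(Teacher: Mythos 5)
Your proposal is correct and follows essentially the same route as the paper: apply the smoothness--suboptimality bound $\|\nabla f_v(x)\|^2 \le 2L_v(f_v(x)-f_v^*)$ (valid since interpolation makes $x^*$ a minimizer of $f_v$), take expectations, and for $b$-minibatch sampling swap the sums by double counting and compare coefficient-wise using $f_i(x)-f_i(x^*)\ge 0$, which yields exactly $\cL_{\max}=\max_i \binom{n-1}{b-1}\big/\sum_{B\ni i}L_B^{-1}$. The only difference is cosmetic: the paper first derives the general arbitrary-sampling formula $\cL_{\max}=\max_i p_i\big/\sum_{B\ni i}p_B/L_B$ and then specializes, whereas you go directly to the minibatch case.
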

With the above lemma we can now establish our main theorem.
\begin{theorem}
\label{theo:SPLquasarconvex}
Let $v$ be a sampling vector.
Assume interpolation~\ref{ass:over} holds. Assume that  each $f_i$ is $\zeta$-quasar-convex with respect to $x^*$ and  $L_i$-smooth.
Then SGD with $\text{SPS}$ \eqref{SPLRv} and $c> \frac{1}{2\zeta}$ converges as follows: 
$$\min_{i=0,\ldots, K-1}\E{f(x^i) - f^*}  \; \leq \; \frac{2c^2} {2c \zeta-1}\frac{ \cL_{\max}}{K} \|x^{0}-x^*\|^2, $$
where $\cL_{\max}$ is  defined in Lemma~\eqref{lem:cLmax}.
\end{theorem}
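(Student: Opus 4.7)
The plan is to run a one-step descent analysis on $\|x^{k+1}-x^*\|^2$, exploit that each $f_i$ being $\zeta$-quasar-convex with respect to the same $x^*$ forces $f_v$ to also be $\zeta$-quasar-convex with respect to $x^*$ (since $f_v = \tfrac{1}{n}\sum_i v_i f_i$ is a nonnegative combination of the inequalities~\eqref{eq:quasar-convex}), and then plug in the SPS choice so that both the cross term and the quadratic term collapse to multiples of the same ratio $(f_v(x^k)-f_v^*)^2/\|\nabla f_v(x^k)\|^2$. Interpolation enters here because it guarantees $f_v(x^*)=f_v^*=0$ for every realization of~$v$, so the numerator $f_v(x^k)-f_v^*$ equals $f_v(x^k)-f_v(x^*)$ and quasar-convexity of $f_v$ can be invoked at $x^*$.

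Concretely, I would first expand
\begin{equation*}
\|x^{k+1}-x^*\|^2 = \|x^k-x^*\|^2 - 2\gamma_k \langle \nabla f_v(x^k),\, x^k-x^*\rangle + \gamma_k^2 \|\nabla f_v(x^k)\|^2,
\end{equation*}
then apply $\zeta$-quasar-convexity of $f_v$ to get $\langle \nabla f_v(x^k), x^k-x^*\rangle \ge \zeta(f_v(x^k)-f_v^*)$. Substituting $\gamma_k = (f_v(x^k)-f_v^*)/(c\|\nabla f_v(x^k)\|^2)$ into both remaining terms gives
\begin{equation*}
\|x^{k+1}-x^*\|^2 \;\le\; \|x^k-x^*\|^2 \;-\; \frac{2c\zeta-1}{c^2}\cdot\frac{(f_v(x^k)-f_v^*)^2}{\|\nabla f_v(x^k)\|^2}.
\end{equation*}
The assumption $c>1/(2\zeta)$ is exactly what makes the coefficient $(2c\zeta-1)/c^2$ strictly positive, so this is a genuine contraction in expectation.

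Next I would take expectation conditional on $x^k$ (so that $v$ is independent of $x^k$) and apply Lemma~\ref{lem:cLmax} to replace the random ratio by a deterministic lower bound,
\begin{equation*}
\E{\frac{(f_v(x^k)-f_v^*)^2}{\|\nabla f_v(x^k)\|^2}} \;\ge\; \frac{1}{2\cL_{\max}}\bigl(f(x^k)-f^*\bigr).
\end{equation*}
This yields the recursion $\E{\|x^{k+1}-x^*\|^2} \le \E{\|x^k-x^*\|^2} - \frac{2c\zeta-1}{2c^2\cL_{\max}}\E{f(x^k)-f^*}$. Telescoping from $k=0$ to $K-1$, dropping the nonnegative term $\E{\|x^K-x^*\|^2}$, and bounding the minimum by the average over $k$ delivers the stated rate $\tfrac{2c^2}{2c\zeta-1}\tfrac{\cL_{\max}}{K}\|x^0-x^*\|^2$.

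The main obstacle I anticipate is not the algebraic descent calculation, which collapses cleanly once the SPS is substituted, but rather justifying the two structural inputs that make the step size work at all: (i) that $f_v$ inherits $\zeta$-quasar-convexity with respect to the \emph{same} $x^*$ shared by every $f_i$, which is crucial because the SPS is defined through $f_v$ while the convergence target is the deterministic $f$; and (ii) that interpolation forces $f_v^* = f_v(x^*)$ almost surely so that Lemma~\ref{lem:cLmax} can be applied verbatim to bridge from the stochastic ratio back to $f(x^k)-f^*$. Once these two points are pinned down, the remainder of the argument is a short direct computation.
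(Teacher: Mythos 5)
Your proposal is correct and follows essentially the same route as the paper's own proof: expand $\|x^{k+1}-x^*\|^2$, use quasar-convexity of $f_v$ (inherited from the $f_i$'s as a nonnegative combination), substitute the SPS so the step collapses to a multiple of $(f_v(x^k)-f_v^*)^2/\|\nabla f_v(x^k)\|^2$, invoke Lemma~\ref{lem:cLmax} under interpolation, and telescope. The only cosmetic difference is that you substitute the SPS into both terms at once, whereas the paper first keeps $\gamma_k$ and reuses the SPS definition later; the constants and conclusion are identical.
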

We now use  $\cL_{\max}(b)$ given in Lemma~\ref{lem:cLmax} to derive the importance sampling complexity. To the best of our knowledge, this is the first importance sampling result for SGD with SPS in any setting.
\begin{corollary}\label{cor:itercomplexSPLquasar}
Consider the setting of Theorem~\ref{theo:SPLquasarconvex} with $c=1/4\zeta.$ Given $\epsilon>0$ we have that 
\begin{multline}\label{eq:itercomplexSPLquasar}
 k \geq \frac{\cL_{\max}}{4 \zeta^2} \frac{\|x^{0}-x^*\|^2}{\epsilon} =\cO\left(\frac{\cL_{\max}}{\zeta^2 \epsilon} \right) \\ \; \Rightarrow \; \min_{i=0,\ldots, K-1}\E{f(x^i) - f^*} <\epsilon.
\end{multline}
{\bf 1.} (Full batch) If we use full batch sampling we have that $\cL_{\max} =L$ and~\eqref{eq:itercomplexSPLquasar} becomes $\cO(L/\epsilon\zeta^2)$
{\bf 2.} (Importance sampling). If we use single element sampling with $p_i =L_i/\sum_{j}L_j$ we have that $\cL_{\max} =\frac{1}{n} \sum_{j=1}L_j \eqdef \overline{L}$ and~\eqref{eq:itercomplexSPLquasar} becomes $\cO(\overline{L}/\epsilon\zeta^2)$.
\end{corollary}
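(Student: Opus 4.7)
The plan is a three-stage specialization of Theorem \ref{theo:SPLquasarconvex}: one stage for the iteration complexity itself, and one for evaluating $\cL_{\max}$ under each of the two sampling schemes.

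First, I would substitute the prescribed constant $c$ into the rate from Theorem \ref{theo:SPLquasarconvex}, giving $\min_{i \leq k-1}\E{f(x^i) - f^*} \leq \frac{2c^2}{2c\zeta - 1}\,\frac{\cL_{\max}\|x^0 - x^*\|^2}{k}$. The prefactor $\frac{2c^2}{2c\zeta - 1}$ depends only on $\zeta$; a quick differentiation shows it is minimized at $c = 1/\zeta$ with value $2/\zeta^2$, so the fixed choice of $c$ in the corollary absorbs into an absolute constant of order $1/\zeta^2$. Imposing ``right-hand side $<\epsilon$'' and solving for $k$ yields the claimed $k = \cO(\cL_{\max}/(\zeta^2 \epsilon))$. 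This step is pure algebra, with no real obstacle.

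Second, for the full batch case, sampling is deterministic with $v \equiv \mathbf{1}$, so $f_v = f$, $f_v^* = f^*$, and $\nabla f_v = \nabla f$. The expectation in \eqref{eq:cLmaxbnd} collapses to the deterministic ratio $(f(x) - f^*)^2 / \|\nabla f(x)\|^2$, which by the smoothness consequence $\|\nabla f(x)\|^2 \leq 2L(f(x) - f^*)$ (Lemma \ref{lem:smoothsubopt} applied to $f$) is bounded below by $(f(x) - f^*)/(2L)$, identifying $\cL_{\max} = L$. One can equivalently plug $b = n$ into the closed form of Lemma \ref{lem:cLmax}, noting $\binom{n-1}{n-1}=1$ and that the only size-$n$ subset of $[n]$ has smoothness constant $L$.

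Third, for importance sampling I would take the single-element sampling vector $v = e_i/p_i$ with probabilities $p_i = L_i/\sum_j L_j$. Then $f_v(x) - f_v^* = (f_i(x) - f_i^*)/(n p_i)$ and $\nabla f_v(x) = \nabla f_i(x)/(n p_i)$, so the $1/(np_i)^2$ prefactor cancels inside the ratio. Applying $L_i$-smoothness componentwise via Lemma \ref{lem:smoothsubopt} and taking expectation over $i$:
\begin{equation*}
\E{\frac{(f_v(x) - f_v^*)^2}{\|\nabla f_v(x)\|^2}} = \sum_{i=1}^n p_i \cdot \frac{(f_i(x) - f_i^*)^2}{\|\nabla f_i(x)\|^2} \geq \sum_{i=1}^n \frac{p_i}{2 L_i}\bigl(f_i(x) - f_i^*\bigr) = \frac{f(x) - f^*}{2 \overline{L}},
\end{equation*}
where the last equality uses $p_i/L_i = 1/\sum_j L_j$ and $\sum_i f_i = n f$. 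This identifies $\cL_{\max} = \overline{L}$ as claimed.

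The main obstacle I anticipate is that the closed-form expression in Lemma \ref{lem:cLmax} is stated only for $b$-minibatch (uniform) sampling, so for the nonuniform importance sampling case I must work directly from the defining inequality \eqref{eq:cLmaxbnd} as above, rather than by a black-box citation. Once the two values $\cL_{\max} = L$ and $\cL_{\max} = \overline{L}$ are in hand, they substitute into the iteration threshold from the first stage to yield the two stated complexities.
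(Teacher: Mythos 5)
Your proposal is correct in substance and follows the route the paper intends: specialize Theorem~\ref{theo:SPLquasarconvex} and then evaluate $\cL_{\max}$ for the two samplings. The paper gives no separate proof of this corollary; its implicit argument for part 2 is to invoke the arbitrary-sampling formula $\cL_{\max}=\max_i p_i\big/\sum_{B:i\in B}\tfrac{p_B}{L_B}$ from the appendix version of Lemma~\ref{lem:cLmax} (Lemma~\ref{lem:cLmaxapp}), which for single-element sampling with $p_i=L_i/\sum_j L_j$ gives $L_i/(np_i)=\overline{L}$, whereas you re-derive the bound $\E{(f_v-f_v^*)^2/\norm{\nabla f_v}^2}\geq (f(x)-f^*)/(2\overline{L})$ directly from \eqref{eq:cLmaxbnd}; this is a perfectly valid and more self-contained substitute (and indeed necessary if one only has the minibatch formula from the main text), and your full-batch and algebraic steps match the paper. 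One point you gloss over, though it is really a defect of the statement rather than of your argument: the prescribed $c=1/4\zeta$ does not satisfy the hypothesis $c>\tfrac{1}{2\zeta}$ of Theorem~\ref{theo:SPLquasarconvex} (so $\tfrac{2c^2}{2c\zeta-1}<0$ and the theorem cannot be applied literally), and in fact no admissible $c$ yields the displayed factor $\tfrac{1}{4\zeta^2}$, since $\min_{c>1/(2\zeta)}\tfrac{2c^2}{2c\zeta-1}=\tfrac{2}{\zeta^2}$ attained at $c=1/\zeta$ --- which is exactly the optimization you carry out. Your conclusion that the prefactor is of order $1/\zeta^2$, hence $k=\cO(\cL_{\max}/(\zeta^2\epsilon))$, is the correct and provable content; the explicit constant and the value of $c$ in the corollary as printed appear to be a typo, and it would be worth flagging this rather than absorbing it silently.
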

We highlight that the result on importance sampling of Corollary~\ref{eq:itercomplexSPLquasar} requires the knowledge of the smoothness parameters $L_i$. This comes in contradiction with the parameter-free nature of the stochastic Polyak step-size. However, such result was missing from the literature and we believe that it could work as a first step towards the understanding of efficient (parameter-free) non-uniform sampling variants of SGD with SPS.  We leave such extensions for future work.
\subsection{PL Condition}
Here we present our convergence results for functions satisfying the PL condition~\eqref{eq:PL}.
\subsubsection{Constant Step-size}
Let us start by presenting convergence guarantees for SGD with constant step-size.
\begin{theorem}
\label{theo:PLConstant}
Let $f$ be $L$-smooth. Assume $f \in PL(\mu)$ and $ g \in \text{ER}(\rho)$. Let $\gamma_k = \gamma\leq \frac{1}{1 +2\rho/\mu} \frac{1}{L},$ for all $k$, then SGD given by \eqref{eq:sgdstep} converges as follows: 
\begin{equation}\label{eq:functionTheoremExpResidual}
\Exp[f(x^{k})-f^*] \leq \left(1- \gamma \mu \right)^k [f(x^0)-f^*] + \frac{L \gamma \sigma^2} {\mu}.
\end{equation}
Hence, given $\epsilon>0$ and using the step size $\gamma =\frac{1}{L}\min \left\{ \frac{\mu \epsilon}{2 \sigma^2}, \, \frac{1}{1 +2\rho/\mu}\right\}$ we have that
\begin{multline}\label{eq:itercomplexPL}
k\geq \frac{L} {\mu} \max \left\{ \frac{2 \sigma^2}{\mu \epsilon}, \, 1 +\frac{2 \rho}{\mu}\right\} \log\left(\frac{2(f(x^0)-f^*)}{\epsilon}\right) \\
\;\; \implies \;\; \E{f(x^k) -f^*} \leq \epsilon.
\end{multline}
\end{theorem}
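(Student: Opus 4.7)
The plan is to follow the classical descent analysis for smooth functions, then inject the PL condition and the variance bound from Lemma~\ref{lem:varbndrho}. Starting from $L$-smoothness applied to the SGD step $x^{k+1} = x^k - \gamma g(x^k)$, taking conditional expectation given $x^k$, and using $\E{g(x^k)\mid x^k} = \nabla f(x^k)$, I obtain
\begin{equation*}
\E{f(x^{k+1})\mid x^k} \leq f(x^k) - \gamma \|\nabla f(x^k)\|^2 + \frac{L\gamma^2}{2}\E{\|g(x^k)\|^2 \mid x^k}.
\end{equation*}
Applying Lemma~\ref{lem:varbndrho}, which is the key consequence of the \ref{eq:expresidual} condition, to bound $\E{\|g(x^k)\|^2\mid x^k}$ by $4\rho(f(x^k)-f^*) + \|\nabla f(x^k)\|^2 + 2\sigma^2$, and grouping terms, yields
\begin{equation*}
\E{f(x^{k+1}) - f^*\mid x^k} \leq \bigl(1 + 2L\gamma^2\rho\bigr)(f(x^k)-f^*) - \gamma\bigl(1 - \tfrac{L\gamma}{2}\bigr)\|\nabla f(x^k)\|^2 + L\gamma^2\sigma^2.
\end{equation*}

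Now I use the PL inequality $\|\nabla f(x^k)\|^2 \geq 2\mu(f(x^k)-f^*)$ to eliminate the gradient term. For the resulting contraction coefficient to be at most $1 - \gamma\mu$, I need
\begin{equation*}
1 - 2\gamma\mu + L\gamma^2\mu + 2L\gamma^2\rho \;\leq\; 1 - \gamma\mu,
\end{equation*}
which simplifies to $\gamma \leq \frac{1}{L(1+2\rho/\mu)}$ --- exactly the step-size hypothesis. Under this condition I obtain the one-step recursion
\begin{equation*}
\E{f(x^{k+1}) - f^*\mid x^k} \leq (1 - \gamma\mu)(f(x^k) - f^*) + L\gamma^2\sigma^2.
\end{equation*}

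Taking total expectation and unrolling this recurrence gives
\begin{equation*}
\E{f(x^k) - f^*} \leq (1-\gamma\mu)^k[f(x^0)-f^*] + L\gamma^2\sigma^2 \sum_{j=0}^{k-1}(1-\gamma\mu)^j \leq (1-\gamma\mu)^k[f(x^0)-f^*] + \frac{L\gamma\sigma^2}{\mu},
\end{equation*}
using the geometric series bound $\sum_{j\geq 0}(1-\gamma\mu)^j = 1/(\gamma\mu)$. This is exactly~\eqref{eq:functionTheoremExpResidual}. For the iteration complexity~\eqref{eq:itercomplexPL}, the strategy is the standard two-term balancing: pick $\gamma$ so that both $(1-\gamma\mu)^k[f(x^0)-f^*] \leq \epsilon/2$ and $L\gamma\sigma^2/\mu \leq \epsilon/2$. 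The second bound forces $\gamma \leq \frac{\mu\epsilon}{2L\sigma^2}$, and combining with the step-size admissibility constraint suggests $\gamma = \frac{1}{L}\min\{\mu\epsilon/(2\sigma^2),\, 1/(1+2\rho/\mu)\}$. Using $(1-\gamma\mu)^k \leq e^{-\gamma\mu k}$, the first bound then requires $k \geq \frac{1}{\gamma\mu}\log\!\bigl(2(f(x^0)-f^*)/\epsilon\bigr)$, and substituting the two possible values of $\gamma$ yields the $\max$ expression in~\eqref{eq:itercomplexPL}.

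No serious obstacle is expected: the only delicate point is matching the algebra so that the step-size threshold $\frac{1}{L(1+2\rho/\mu)}$ emerges cleanly from requiring the contraction $1-\gamma\mu$, and so that both terms in the min for the complexity bound line up properly. All other steps are standard smoothness/PL manipulations combined with the ER-based variance bound from Lemma~\ref{lem:varbndrho}.
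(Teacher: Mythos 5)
Your proposal is correct and follows essentially the same route as the paper's proof: the smoothness descent inequality, the variance bound of Lemma~\ref{lem:varbndrho}, the PL step (valid since $\gamma \le 1/L$ makes the coefficient $\gamma(1-L\gamma/2)$ nonnegative), the same algebra yielding the threshold $\gamma \le \frac{1}{L(1+2\rho/\mu)}$, the geometric-series unrolling, and the same two-term balancing for the complexity bound. The only cosmetic difference is that you use $(1-\gamma\mu)^k \le e^{-\gamma\mu k}$ where the paper uses $\log\left(\frac{1}{1-\gamma\mu}\right) \ge \gamma\mu$, which is equivalent.
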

When the function is able to interpolate the data (interpolation condition~\ref{ass:over} is satisfied), SGD with constant step size  convergences with a linear rate to the exact solution (no neighborhood of convergence), as we show next.
\begin{corollary}
\label{theo:SGDforPolyakOVer}
Consider the setting of Theorem~\ref{theo:PLConstant} and  assume interpolation~\ref{ass:over} holds. Then SGD with $\gamma_k = \gamma\leq \frac{1}{1 +2\rho/\mu} \frac{1}{L}$ converges linearly at a rate of $(1-\gamma \mu).$
Consequently for every  $\epsilon>0$,
 the iteration complexity  of SGD
to achieve $\E{f(x^k) -f^*} \leq \epsilon$ is
\begin{equation}\label{eq:itercomplexPLover}
k \;\geq  \;\frac{L} {\mu} \left(1 +2\frac{\rho}{\mu}\right) \log\left(\frac{f(x^0)-f^*}{\epsilon}\right).
\end{equation}
If $v$ is a $b$--minibatch sampling then $TC(b)$, the \emph{total complexity} with respect to the minibatch size, is 
\begin{equation}\label{eq:totalcomplexPLover}
TC(b)\; \leq \; \frac{L} {\mu} \left(b +2\frac{L_{\max} }{\mu}\frac{n-b}{n-1} \right) \log\left(\frac{f(x^0)-f^*}{\epsilon}\right).
\end{equation}
Finally, let $\kappa_{\max} \eqdef L_{\max}/\mu.$
The minibatch size $b^*$ that optimizes the total complexity is given by
\begin{equation}\label{eq:boptimalPLover}
b^*  = 
\begin{cases}
1 & \mbox{if } n-1 \geq  2\kappa_{\max}\\
n & \mbox{if } n-1 <  2\kappa_{\max}.
\end{cases}
\end{equation}
\end{corollary}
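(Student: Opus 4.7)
The plan is to derive the corollary as an almost immediate consequence of Theorem~\ref{theo:PLConstant} once we observe what interpolation buys us, and then do a one-line optimization over the minibatch size $b$.

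First I would show that under Assumption~\ref{ass:over} the gradient noise vanishes: $\sigma^2 = 0$. Indeed, interpolation says each $f_i$ attains its minimum at $x^*$, so $\nabla f_i(x^*) = 0$ for all $i$, which gives $g(x^*) = \frac{1}{n}\sum_i v_i \nabla f_i(x^*) = 0$ almost surely for any sampling vector $v$; hence $\E{\|g(x^*)\|^2}=0$. Plugging $\sigma^2 = 0$ into \eqref{eq:functionTheoremExpResidual} from Theorem~\ref{theo:PLConstant} immediately yields
\[
\E{f(x^k) - f^*} \leq (1-\gamma\mu)^k [f(x^0) - f^*],
\]
i.e., the promised linear rate $(1-\gamma\mu)$ for any $\gamma \leq \frac{1}{L(1+2\rho/\mu)}$.

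Next, to obtain the iteration complexity~\eqref{eq:itercomplexPLover}, I would set $\gamma = \frac{1}{L(1+2\rho/\mu)}$ (the largest allowed), use $\log(1-\gamma\mu)^{-1} \geq \gamma\mu$, and solve $(1-\gamma\mu)^k (f(x^0)-f^*) \leq \epsilon$ for $k$, which gives $k \geq \frac{L}{\mu}(1+2\rho/\mu)\log\!\left(\frac{f(x^0)-f^*}{\epsilon}\right)$.

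For the total complexity in the minibatch case, I would invoke Proposition~\ref{prop:bniceconst}, which supplies $\rho(b) = L_{\max}\frac{n-b}{(n-1)b}$ (the $x^*$-convexity hypothesis of that proposition is free here since interpolation plus $L_i$-smoothness of each $f_i$ already puts us in the $L_i + \text{Interpolated}$ branch of Theorem~\ref{theo:hierarchy}, which implies \ref{eq:expresidual}). Multiplying the per-iteration cost $b$ by the iteration complexity gives
\[
TC(b) \leq b\cdot\frac{L}{\mu}\!\left(1+\frac{2L_{\max}}{\mu}\frac{n-b}{(n-1)b}\right)\log\!\left(\tfrac{f(x^0)-f^*}{\epsilon}\right),
\]
and the factor of $b$ cancels inside the second term, producing exactly~\eqref{eq:totalcomplexPLover}.

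Finally, the optimization~\eqref{eq:boptimalPLover} is the cleanest step: the expression $b + \frac{2L_{\max}}{\mu}\cdot\frac{n-b}{n-1}$ is \emph{affine} in $b$ with slope $1 - \frac{2\kappa_{\max}}{n-1}$. Hence the minimum over $b\in\{1,\ldots,n\}$ is attained at an endpoint: at $b=1$ when the slope is nonnegative, i.e.\ $n-1 \geq 2\kappa_{\max}$, and at $b=n$ otherwise. No part of the argument is really hard; the only step requiring a touch of care is verifying that interpolation plus $L_i$-smoothness suffices to invoke Proposition~\ref{prop:bniceconst}'s formula for $\rho(b)$ without additionally assuming $x^*$-convexity, which is handled by the hierarchy in Theorem~\ref{theo:hierarchy}.
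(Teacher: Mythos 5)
Your proposal is correct and follows essentially the same route as the paper: interpolation forces $\sigma^2=0$ so Theorem~\ref{theo:PLConstant} gives the linear rate $(1-\gamma\mu)$ and the iteration complexity, the minibatch expression $\rho(b)$ from Proposition~\ref{prop:bniceconst} (legitimate here since interpolation implies $x^*$--convexity, as in Theorem~\ref{theo:hierarchy}) is multiplied by $b$ to get the total complexity, and the endpoint argument for the affine-in-$b$ expression yields~\eqref{eq:boptimalPLover}. Your handling of why Proposition~\ref{prop:bniceconst} applies without separately assuming $x^*$--convexity is a welcome extra detail that the paper's proof leaves implicit.
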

Note that Corollary~\ref{theo:SGDforPolyakOVer} recovers the linear convergence rate of the gradient descent algorithm under the PL condition~\citep{karimi2016linear} as a special case. Indeed for gradient descent we have that $\sigma = 0 = \rho$. Thus by choosing $ \gamma = \frac{1}{L}$ the resulting iteration complexity is $ \frac{L}{\mu}  \log(\epsilon^{-1})$ 
which is currently the tightest known convergence result for gradient descent under the PL condition~\cite{karimi2016linear}. 
On the other extreme, we see that for $b=1$, that is SGD without minibatching, we obtain the convergence
rate $1-\mu^2/3LL_{\max}$ which matches the current state-of-the-art rate~\cite[Thm. 4]{vaswani2018fast},~\cite[Thm. 3]{Khaled-nonconvex-2020} and~\cite[Thm. 4]{lei2019stochastic} known under the exact same assumptions. Thus  we recover the best known rate on either end ($b=n$ and $b=1$), and give the first rates for everything in between $1<b<n$. To the best of our knowledge our result is the first analysis of SGD for PL functions that recovers the deterministic gradient descent convergence as special case.

The closest work to our result, on the convergence of SGD for PL functions is \cite{Khaled-nonconvex-2020}. There the authors provide similar convergence result to Theorem~\ref{theo:PLConstant} but using different step-size selection and under the slightly more general ABC condition~\eqref{eq:ABCpaper}. In Appendix~\ref{sec:comparekhaled} we present a detailed comparison of our Theorem~\ref{theo:PLConstant} and Theorem~3 in~\cite{Khaled-nonconvex-2020}.

\subsubsection{Decreasing Step-size}

As an extension of Theorem~\ref{theo:PLConstant}, we
also show how to obtain a  $\cO(1/k)$ convergence for SGD using an insightful \emph{stepsize-switching rule}. This stepsize-switching rule describes when one should switch from a constant to a decreasing step-size regime. 

\begin{theorem}[Decreasing step sizes/switching strategy]
\label{TheoremPLDecreasing}
Let $f$ be an $L$-smooth. Assume $f \in PL(\mu)$ and $ g \in \text{ER}(\rho)$. Let $k^* \eqdef 2\frac{L}{\mu} \left(1+2\frac{\rho}{\mu}\right)$ and 
\begin{equation}
\gamma^k= 
\begin{cases}
\displaystyle  \frac{\mu}{L (\mu +2\rho)}, & \mbox{for}\quad k \leq \lceil k^* \rceil\\[0.3cm]
\displaystyle \frac{2k+1}{(k+1)^2 \mu} &  \mbox{for}\quad k >  \lceil k^* \rceil
\end{cases}
\end{equation}
If $k \geq  \lceil k^*  \rceil$, then SGD given by \eqref{eq:sgdstep} satisfies:
\begin{equation}\label{eq:decreasingstepPLExpResidual}
\Exp[f(x^{k})-f^*] \le  \frac{4 L \sigma^2 }{\mu^2 }\frac{1}{k} + \frac{( k^*)^2}{k^2 e^2}  [f(x^0)-f^*]  .
\end{equation}
\end{theorem}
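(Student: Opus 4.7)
The plan is to split the trajectory at the switching time $\lceil k^* \rceil$, bound the error at the end of the constant-step phase with Theorem~\ref{theo:PLConstant}, and then run a clean telescoping argument on the decreasing-step phase using the Lyapunov sequence $a_k \eqdef k^2\,\E{f(x^k)-f^*}$.

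First I would derive the one-step recursion valid for any admissible step size. Starting from $L$-smoothness, $f(x^{k+1})\le f(x^k)-\gamma_k\langle\nabla f(x^k),g(x^k)\rangle+\tfrac{L\gamma_k^2}{2}\|g(x^k)\|^2$, taking conditional expectation, and using Lemma~\ref{lem:varbndrho} to bound $\E{\|g(x^k)\|^2}\le 4\rho(f(x^k)-f^*)+\|\nabla f(x^k)\|^2+2\sigma^2$, yields
\begin{equation*}
\E{f(x^{k+1})-f^*\mid x^k}\le\bigl(1-2\mu\gamma_k+L\gamma_k^2(\mu+2\rho)\bigr)(f(x^k)-f^*)+L\gamma_k^2\sigma^2,
\end{equation*}
after invoking PL on the $\|\nabla f(x^k)\|^2$ term. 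The key structural observation is that whenever $\gamma_k\le\frac{\mu}{L(\mu+2\rho)}$, one has $L\gamma_k^2(\mu+2\rho)\le\mu\gamma_k$, which collapses the contraction factor to the clean form $1-\mu\gamma_k$. This is exactly the recursion underpinning Theorem~\ref{theo:PLConstant}, which I will reuse as a black box for $k\le\lceil k^*\rceil$: since $\gamma\mu k^*=2$ by construction, the constant-step result~\eqref{eq:functionTheoremExpResidual} gives $\E{f(x^{k^*})-f^*}\le e^{-2}[f(x^0)-f^*]+\frac{L\gamma\sigma^2}{\mu}$.

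For $k>\lceil k^*\rceil$ I would first check admissibility: $\gamma_k=\frac{2k+1}{(k+1)^2\mu}$ is decreasing for $k\ge 1$, and $\gamma_{k^*}\le\frac{2}{k^*\mu}=\frac{\mu}{L(\mu+2\rho)}$, so the clean recursion applies throughout this phase. Then the algebraic identity $1-\mu\gamma_k=\frac{k^2}{(k+1)^2}$ lets me multiply the recursion by $(k+1)^2$ to obtain
\begin{equation*}
a_{k+1}\le a_k+\frac{L\sigma^2(2k+1)^2}{\mu^2(k+1)^2}\le a_k+\frac{4L\sigma^2}{\mu^2},
\end{equation*}
using $(2k+1)^2\le 4(k+1)^2$. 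Telescoping from $k^*$ to $k-1$ and dividing by $k^2$ gives
\begin{equation*}
\E{f(x^k)-f^*}\le\frac{(k^*)^2}{k^2}\E{f(x^{k^*})-f^*}+\frac{4L\sigma^2}{\mu^2 k}.
\end{equation*}
Substituting the endpoint bound of the constant phase yields the advertised $\frac{(k^*)^2}{k^2e^2}[f(x^0)-f^*]$ term, while the leftover $\frac{(k^*)^2}{k^2}\cdot\frac{L\gamma\sigma^2}{\mu}$ piece can be shown, using $k\ge k^*=\frac{2L(\mu+2\rho)}{\mu^2}$, to be of order $\frac{L\sigma^2}{\mu^2 k}$ and thus absorbed into the first term of~\eqref{eq:decreasingstepPLExpResidual}.

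The main obstacle I anticipate is not conceptual but bookkeeping: the two noise contributions (the neighborhood from the constant phase times the decay factor $(k^*)^2/k^2$, and the telescoped $\frac{4L\sigma^2}{\mu^2}$) must be consolidated into the single $\frac{4L\sigma^2}{\mu^2 k}$ term in the stated bound. Keeping constants tight here requires exploiting the particular definitions $\gamma=\frac{\mu}{L(\mu+2\rho)}$ and $k^*=\frac{2L(\mu+2\rho)}{\mu^2}$ simultaneously, along with the inequality $(2k+1)^2/(k+1)^2\le 4$ at every step of the telescoping — so the argument succeeds precisely because the switching time is tuned to make the product $\gamma\mu k^*=2$ and to dovetail $\gamma_{k^*}$ with the admissible range for the recursion.
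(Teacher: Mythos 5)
Your proposal is correct and follows essentially the same route as the paper: the same one-step recursion from smoothness, the ER variance bound and PL, the same collapse to the $1-\mu\gamma_k=\tfrac{k^2}{(k+1)^2}$ contraction, the same weighted telescoping of $k^2\,\E{f(x^k)-f^*}$ with $(2k+1)^2\le 4(k+1)^2$, and reuse of Theorem~\ref{theo:PLConstant} with $\gamma\mu k^*=2$ for the constant phase. The one bookkeeping point you flag is real: to land on the stated constant $4$ you must merge the two noise terms \emph{before} bounding $(k-k^*)/k^2$ by $1/k$, i.e.\ use $4(k-k^*)+2k^*\le 4k$ (as the paper does), rather than bounding each piece by a multiple of $\tfrac{L\sigma^2}{\mu^2 k}$ separately.
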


\paragraph{Stochastic Polyak-Step-size (SPS).} For the convergence of SGD with SPS for solving functions satisfying the PL condition we refer the interested reader to Theorem 3.5 in \cite{loizou2020stochastic}. There the authors focus on analyzing SGD with single-element uniform sampling. By assuming interpolation, their convergence results can be trivially extended to the arbitrary sampling paradigm using the lower bound~\eqref{eq:a8ejh8s434} and Lemma~\ref{lem:cLmax}.

\section{EXAMPLES} 
\label{sec:examples}
In this section we provide some examples of classes of nonconvex functions that satisfy  the assumptions of our main theorems.

\paragraph{System Identification.}
In optimal control sometimes we need to learn the underlining dynamics of the system we are trying to control. For instance, consider the system governed by the \emph{linear dynamics}
\begin{align}
h_{t+1} & = A h_t + B w_t\\
y_t & = C h_t + Dw_t + \xi_t,  \label{eq:lined}
\end{align}
where $w_t\in \R$ and $y_t\in \R$ are the input and output at time $t$,  $h_t\in\R^d$ is the hidden state,  and $\xi_t \in \R$ is a random variable sampled i.i.d at each iteration. The parameters we want would to learn are the matrices $A \in \R^{d\times d},\, B\in \R^{d \times 1}$, $C\in\R^{1 \times d}$ and $D\in \R$ that govern the dynamics. Furthermore, we can only observe the input-output pairs $(w_t,y_t)$ by simulating the dynamics.

Our goal is to use the collected samples of the simulation $(w_t,y_t)$ to then \emph{fit} a linear model
 \begin{align}
h_{t+1} & = \hat{A} h_t + \hat{B} w_t \nonumber\\
\hat{y}_t & = \hat{C} h_t + \hat{D}w_t, \label{eq:ythat}
\end{align}
governed by the matrices $x \eqdef (\hat{A}, \hat{B}, \hat{C}, \hat{D})$ such that the output of our model $\hat{y}_t$, and that of the simulation $y_t$ are close. That is we want to solve
\begin{equation}
\min_{ x = (\hat{A}, \hat{B}, \hat{C}, \hat{D})}f(x) \eqdef \EE{w_t, \xi_t }{\frac{1}{T} \sum_{i=1}^T \norm{y_t - \hat{y}_t}^2}. \label{eq:lineardynobj}
\end{equation}
As done in~\cite{hardt2018dynam}, we assume that the states $w_t$ are sampled from some fixed distribution.

 This objective function~\eqref{eq:lineardynobj} is highly non-convex due to repeated multiplications of the parameters, as we can see by substituting out the hidden states and unrolling the recurrence~\eqref{eq:ythat} since 
 \begin{equation}
 \hat{y}_t \; = \; \hat{D} w_t + \sum_{k=t}^{t-1}\hat{C} \hat{A}^{t-k-1}\hat{B} w_k +\hat{C}\hat{A}^{t-1}h_0.
 \end{equation}
Despite this non-convexity, the objective function~\eqref{eq:lineardynobj} is quasar-convex~\eqref{eq:quasar-convex} and $L$--weakly smooth\footnote{To be precise the objective function is well approximated and upper bounded by a  quasar-convex and weakly-smoooth function, which also requires some domain restrictions. SGD is then applied to this upper bound. See~\citep{hardt2018dynam} for  details. }, that is
\begin{equation}
\norm{\nabla f(x)}^2 \; \leq\; 2 L (f(x) - f(x^*)) \label{eq:WS}\tag{WS}.
\end{equation}
By also bounding the domain of the parameters, \cite{hardt2018dynam} show 
 that the stochastic gradients $g(x)$ have bounded variance
\begin{equation}
\E{\norm{\nabla f(x) -g(x)}^2} \; \leq \; \sigma^2 \label{eq:BV} \tag{BV}.
\end{equation}
\cite{hardt2018dynam} then use quasar convexity,~\eqref{eq:WS} and~\eqref{eq:BV} to show that the linear dynamics~\eqref{eq:lined} can be learned with SGD in polynomial time.

As a consequence of~\cite{hardt2018dynam}  results, first we show that the objective function~\eqref{eq:lineardynobj} satisfies the assumptions of our Theorem~\ref{theo:master-quasar-convex-res}. 

\begin{theorem} \label{thm:BV-WS-ES}The following hierarchy holds
\vspace{-0.15cm}\begin{center}
\begin{tikzcd}
\boxed{\ref{eq:BV} + \ref{eq:WS}} \arrow[r] &
  \boxed{ES} \arrow[r] &
  \boxed{\colorbox{blue!20}{ER}}
\end{tikzcd}
\end{center}
Furthermore, there are functions for which~\eqref{eq:expresidual} holds and~\eqref{eq:BV} does not. 
\end{theorem}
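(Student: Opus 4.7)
The plan is to verify the two arrows of the hierarchy and then construct a separation example showing that \eqref{eq:expresidual} does not imply \eqref{eq:BV}.

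For the easier arrow ES $\Rightarrow$ \eqref{eq:expresidual}, I would reuse the mechanism already used in Theorem~\ref{theo:hierarchy}: unbiasedness of $g$ gives the identity
\[\E{\|g(x)-g(x^*)-(\nabla f(x)-\nabla f(x^*))\|^2}=\E{\|g(x)-g(x^*)\|^2}-\|\nabla f(x)-\nabla f(x^*)\|^2,\]
so dropping the non-negative subtracted term and applying \eqref{eq:ES-main} yields \eqref{eq:expresidual} with $\rho=\mathcal L$.

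For the arrow \eqref{eq:BV}$+$\eqref{eq:WS} $\Rightarrow$ ES, I would first observe that \eqref{eq:WS} evaluated at $x^*$ forces $\nabla f(x^*)=0$, so $\|\nabla f(x)-\nabla f(x^*)\|^2\le 2L(f(x)-f^*)$. Splitting
\[g(x)-g(x^*)=\bigl[g(x)-\nabla f(x)\bigr]-\bigl[g(x^*)-\nabla f(x^*)\bigr]+\bigl[\nabla f(x)-\nabla f(x^*)\bigr]\]
and applying $\|a+b\|^2\le 2\|a\|^2+2\|b\|^2$, the noise-difference piece is bounded by $4\sigma^2$ via \eqref{eq:BV} and the deterministic piece is bounded by $4L(f(x)-f^*)$ via \eqref{eq:WS}. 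This yields an ES-type bound whose additive noise constant is folded into the $\sigma^2$ of Assumption~\ref{ass:grad-noise} --- consistent with the fact that Lemma~\ref{lem:varbndrho}, the working consequence of ER in all downstream proofs, already carries a $2\sigma^2$ term.

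For the separation, I would take the one-dimensional finite sum of quadratics $f_i(x)=\tfrac{L_i}{2}x^2$ with the $L_i$ not all equal, and $g(x)=\nabla f_i(x)$ with $i$ uniform. Each $f_i$ is convex and $L_i$-smooth, so Proposition~\ref{prop:bniceconst} delivers \eqref{eq:expresidual} with a finite $\rho$. However $\E{\|g(x)-\nabla f(x)\|^2}=\bigl(\tfrac{1}{n}\sum_i(L_i-\bar L)^2\bigr)x^2$ grows without bound in $x$, so no finite $\sigma^2$ satisfies \eqref{eq:BV}. The main subtle point of the whole argument is the additive noise constant in the BV$+$WS $\Rightarrow$ ES step: the hierarchy should be read inside the ABC family \eqref{eq:ABCpaper}, with that constant absorbed into $\sigma^2$, rather than as a literal set-containment for the strict form \eqref{eq:ES-main}. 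The remaining steps (ES $\to$ ER via the unbiasedness identity, and the quadratic separation) are routine.
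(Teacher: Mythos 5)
Your proposal is correct, and your central worry --- that BV$+$WS cannot yield the strict form \eqref{eq:ES-main} because noise at $x^*$ forces an additive constant --- is precisely how the paper itself resolves the statement: its proof announces at the outset that it establishes the relaxed ``working'' forms, namely \eqref{eq:varbndcL2} for ES and \eqref{eq:varbndrho2} for ER, both of which carry the $2\sigma^2$ term, so your ``read the boxes in the ABC sense with the constant absorbed into $\sigma^2$'' is the paper's own reading. The mechanics differ only slightly. The paper decomposes $g(x)=\bigl(g(x)-\nabla f(x)\bigr)+\nabla f(x)$ and bounds $\E{\norm{g(x)}^2}\leq 4L(f(x)-f^*)+2\sigma^2$ directly from \ref{eq:BV} and \ref{eq:WS}, which is \eqref{eq:varbndcL2} with $\cL=L$; the ES-to-ER step is then the one-line relaxation of adding $\norm{\nabla f(x)}^2\geq 0$ to reach \eqref{eq:varbndrho2} with $\rho=L$. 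You instead decompose $g(x)-g(x^*)$ (picking up a slightly larger noise constant, $8\sigma^2$ rather than $2\sigma^2$ after the outer factor of two --- harmless) and then invoke the unbiasedness identity from the proof of Theorem~\ref{theo:hierarchy}; that identity still goes through with the additive constant carried along, so your chain is coherent, just a bit less economical in constants. For the separation, the paper uses a least-squares $f(x)=\frac{1}{2n}\norm{Ax-b}^2$ with $A$ invertible and single-row sampling, citing Theorem~\ref{theo:hierarchy} for ER and noting the stochastic gradients are unbounded; your separable quadratics $f_i(x)=\frac{L_i}{2}x^2$ with distinct $L_i$ accomplish the same thing via Proposition~\ref{prop:bniceconst}, with the added virtue that you compute the variance $\bigl(\frac{1}{n}\sum_i(L_i-\bar L)^2\bigr)x^2$ explicitly, making the failure of \ref{eq:BV} transparent (the paper's example argues unboundedness of $\norm{g(x)}^2$, which is the same phenomenon). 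In short: same approach, equivalent counterexample, and your flagged subtlety is exactly the caveat the paper builds into its statement of the proof.
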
 
Consequently, since~\eqref{eq:lineardynobj} satisfies \eqref{eq:BV}, \eqref{eq:WS} and~\eqref{eq:quasar-convex} we have that it satisfies~\eqref{eq:expresidual} and~\eqref{eq:quasar-convex}, and thus by Theorem~\ref{theo:master-quasar-convex-res} SGD applied to~\eqref{eq:lineardynobj} converges at a rate of $O(1/\sqrt{t}).$

We conjecture that the linear dynamics~\eqref{eq:lined} could be learned without the bounded gradient assumption by only relying on the~\eqref{eq:expresidual} condition. 
This would be significant because, it would mean that the costly projection step onto the constrained set of parameters, required so that~\eqref{eq:BV} holds, may not be necessary. We leave this conjecture to be verified in future work.

\paragraph{Contrived Illustrative Example.}
To given an example of a visually non-convex functions that satisfies both the PL and \ref{eq:expresidual} condition we consider the separable functions $f(x) = \frac{1}{n} \sum_{i=1}^n f_i(x_i).$
 If each $f_i(x_i)$ satisfies the PL condition with constant $\mu_i$ then  $f(x)$ satisfies the PL condition with $\mu =\min_{i=1,\ldots, n} \frac{\mu_i}{n} .$
If in addition each $f_i$ is a smooth function  then according to Theorem~\ref{theo:hierarchy} we have that the~\ref{eq:expresidual} condition holds, and thus Theorem~\ref{theo:SGDforPolyakOVer} holds.
\begin{figure}
\centering
    \includegraphics[width=0.25\textwidth]{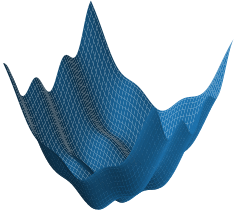} 
\caption{\footnotesize Surface plot of $x^2 + 3\sin^2(x) + 1.5 y^2 + 4\sin^2(y)$}
\label{fig:nonconvex} 
\end{figure}
As an example, consider the nonconvex function
\begin{equation} \label{eq:separable-exe}
f(x) = \frac{1}{n} \sum_{i=1}^n a_i (x_i^2 + 4 b_i \sin^2(x_i)) \eqdef f_i(x),
\end{equation}
where $a_i>0$ and  $1>b_i>0$ for $i=1,\ldots, n$, so that each $f_i$ satisfies the PL condition (see~\cite{karimi2016linear}\footnote{In~\cite{karimi2016linear} the authors claim  that $x^2 + 3\sin^2(x)$ is PL. We then used computer aided analysis to show that  $x^2 + 3b\sin^2(x)$ satisfies the PL condition for $0<b<4.$}). The function~\eqref{eq:separable-exe} is interpolated since $x^* = 0$ is a global minima for each $f_i$. Furthermore $f_i$ is smooth since $|f_i''(x)| \leq 2a_i +6b_i$. By the above arguments, so does $f$ satisfy the PL condition. Thus by Theorem~\ref{theo:SGDforPolyakOVer} we know that SGD converges linearly when applied to~\eqref{eq:separable-exe}.
To illustrate that such functions~\eqref{eq:separable-exe} are nonconvex, we have a surface plot for $n=2$ in Figure~\ref{fig:nonconvex}.
\paragraph{Nonlinear least squares.} Let $F: \R^d \rightarrow \R^n$ be a differentiable function where $DF(x) \in \R^{n \times d}$ is its Jacobian.
Now consider the nonlinear least squares problem $\min_{x \in \R^d} f(x) \eqdef \frac{1}{2n} \norm{F(x) -y}^2=\frac{1}{2n}  \sum_{i=1}^n (F_i(x) -y_i)^2 ,$ where $y \in \R^n.$ 
\begin{lemma} \label{lem:nonlsqPLexe}
Assume there exists $x^*\in \R^d$ such that $F(x^*) =y.$ 
If the $F_i(x)$ functions are Lipschitz and the  $DF(x)$ has full row rank then $F$ satisfies the PL and the \ref{eq:expresidual} condition. 
\end{lemma}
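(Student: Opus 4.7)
\textbf{Proof plan for Lemma~\ref{lem:nonlsqPLexe}.} I will first unpack the gradient structure of the least-squares objective and then verify the two conditions separately, exploiting the fact that interpolation ($F(x^*)=y$) makes $f^*=0$ and kills all gradient terms at $x^*$.

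First I would compute the relevant gradients. Writing $f_i(x) = \tfrac{1}{2}(F_i(x)-y_i)^2$, we have $\nabla f_i(x) = (F_i(x)-y_i)\,\nabla F_i(x)$ and, in aggregate, $\nabla f(x) = \tfrac{1}{n} DF(x)^\top(F(x)-y)$. Since $F(x^*)=y$, immediately $f^*=f(x^*)=0$, $\nabla f(x^*)=0$, and $\nabla f_i(x^*)=0$ for every $i$. In particular the stochastic gradient $g(x)$ arising from any valid sampling satisfies $g(x^*)=0$, so the left-hand side of \ref{eq:expresidual} collapses to $\E\|g(x)-\nabla f(x)\|^2$, i.e.\ the variance of $g(x)$.

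Next I would establish the PL condition. Using $\|DF(x)^\top u\|^2 \ge \sigma_{\min}(DF(x)DF(x)^\top)\,\|u\|^2$, the full row rank hypothesis (which I would interpret as a uniform bound $\sigma_{\min}(DF(x)DF(x)^\top)\ge \lambda>0$, needed here to get a global PL constant) gives
\begin{equation*}
\|\nabla f(x)\|^2 \;=\; \tfrac{1}{n^2}\bigl\|DF(x)^\top(F(x)-y)\bigr\|^2 \;\ge\; \tfrac{\lambda}{n^2}\|F(x)-y\|^2 \;=\; \tfrac{2\lambda}{n}\bigl(f(x)-f^*\bigr),
\end{equation*}
so $f\in PL(\mu)$ with $\mu = \lambda/n$. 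For the \ref{eq:expresidual} condition I would focus on single-element (or $b$-minibatch) sampling: writing $L_F$ for the uniform Lipschitz constant so that $\|\nabla F_i(x)\|\le L_F$ for all $i,x$, the cheap bound $\E\|g(x)-\nabla f(x)\|^2 \le \E\|g(x)\|^2$ combined with
\begin{equation*}
\E\|g(x)\|^2 \;=\; \tfrac{1}{n}\sum_{i=1}^n (F_i(x)-y_i)^2\,\|\nabla F_i(x)\|^2 \;\le\; L_F^2\cdot\tfrac{1}{n}\sum_{i=1}^n (F_i(x)-y_i)^2 \;=\; 2L_F^2\,\bigl(f(x)-f^*\bigr)
\end{equation*}
yields $g\in \text{ER}(\rho)$ with $\rho = L_F^2$. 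A general sampling version follows from the same computation after reweighting.

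The main obstacle here is conceptual rather than technical: the statement only says ``$DF(x)$ has full row rank,'' but a pointwise rank condition is insufficient to derive a global PL constant, since $\sigma_{\min}(DF(x)DF(x)^\top)$ could approach zero along a sequence of $x$. I would therefore flag this and either strengthen the hypothesis to a uniform lower bound $\sigma_{\min}(DF(x)DF(x)^\top)\ge \lambda>0$, or restrict attention to a bounded sublevel set where such a bound follows automatically by continuity of $DF$ together with the non-vanishing determinant. Beyond this caveat, both verifications are short and rely only on the two structural inputs (uniform gradient bound and uniform minimum singular value).
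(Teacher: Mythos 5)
Your proposal is correct and takes essentially the same route as the paper: both read ``Lipschitz $F_i$'' as a uniform bound $\|\nabla F_i(x)\|\le u$ and ``full row rank'' as a uniform lower bound $\|DF(x)^\top v\|\ge \ell\|v\|$, then bound the second moment of the stochastic gradient by $2\rho\,(f(x)-f^*)$ (the paper states this as Expected Smoothness and passes to \ref{eq:expresidual} via Theorem~\ref{theo:hierarchy}, whereas you bound the residual directly; since $g(x^*)=0$ it is the same computation) and obtain PL from the singular-value bound on $DF(x)^\top$. The caveat you flag about the pointwise rank condition not yielding a global constant is also acknowledged in the paper right after its proof, where it notes that the uniform lower bound is hard to verify on all of $\R^d$ and is more plausible on a bounded set or near the solution.
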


\paragraph{Star/quasar-convex.}
Several nonconvex empirical risk problems are quasar-convex functions~\citep{LeeV16}.
Let $f_i: \R^d \mapsto \R$ be a smooth star-convex (quasar-convex with $\zeta =1$) centered at $0$. Let $\mA \in  \R^{m \times n}, b\in \R^m$  such that there exists $\mA x^* =b.$ Since compositions of affine maps with star convex functions are star convex \cite[Section A.4]{LeeV16}  we have that 
$f_i(\mA x -b)$ is star convex centered at $x^*.$ Furthermore the average of star convex functions that share the same center are star convex. Thus, $ f(x) = \frac{1}{n}\sum_{i=1}^n f_i(\mA x -b),$
is a star-convex function which also satisfies the interpolation condition. 
\section{CONCLUSION}
\label{SectionConclusion}
We establish a hierarchy between the expected residual (\ref{eq:expresidual}) condition and a host of other assumptions previously used in the analysis of SGD in the smooth setting, showing that \ref{eq:expresidual} is a strictly weaker condition.
Using the \ref{eq:expresidual}, we present the first convergence results for SGD under different step-size selections (constant, decreasing, and stochastic Polyak step-size) on quasar-convex functions~\eqref{eq:quasar-convex} without the bounded gradient or bounded variance assumption. For functions satisfying the PL condition~\eqref{eq:PL} we provide tight theoretical convergence guarantees for minibatch SGD that recover the best-known convergence results for deterministic gradient descent and single-element sampling SGD as special cases, and all minibatch sizes in between.

\subsubsection*{Acknowledgements}
Nicolas Loizou acknowledges support by the IVADO post-doctoral funding program. 

The work of Othmane Sebbouh was supported in part by the French government under management of Agence Nationale de la Recherche as part of the "Investissements d’avenir" program, reference ANR19-P3IA-0001 (PRAIRIE 3IA Institute). Othmane Sebbouh also acknowledges the support of a "Chaire d'excellence de l'IDEX Paris Saclay".

{
\bibliographystyle{apalike}
\bibliography{SGD_Structured}
}

\appendix 
\onecolumn
\aistatstitle{Supplementary Material  \\ SGD for Structured Nonconvex Functions: \\Learning Rates, Minibatching and Interpolation}

The Supplementary Material is organized as follows: In Section~\ref{AppendixTechincalLemmas}, we give some lemmas and consequences of smoothness. In Section~\ref{ProofsExpectedResidual} we present the proofs of the proposition, lemma and theorem related to the Expected Residual condition as presented in Section~\ref{SectionESER} of the main paper. In Section~\ref{AppendixProofs} we present the proofs of the main theorems. In Section~\ref{AppendixTheoryAdditional} we provide additional convergence results under the strongly quasar-convex assumption (Section~\ref{sec:strongquasar}), the Expected Smoothness assumption (Section~\ref{sec:convexpsmooth}) and a  minibatch analysis that does not rely on the interpolation condition (Section~\ref{secapp:minibatch}).

\tableofcontents
\newpage

\section{Technical Lemmas on Smoothness}
\label{AppendixTechincalLemmas}
Here we give some lemmas and consequences of smoothness. 

 For all  of our analysis we do not need that the $f_i$ functions be smooth in all directions. Rather, we just need them to be smooth along  the  $x^*$--direction, as we define next.
 \begin{definition}\label{def:fisgdstarsmooth}
 We say that $f: \R^d \mapsto \R$ is  $L$--smooth function along the $x^*$--direction if there exists $x^*$ such that
\begin{align} \label{eq:smoothnessfuncstar}
 f(z) -f(x) &\leq  \dotprod{\nabla f(x), z-x} +\frac{L}{2}\norm{z-x}^2, \quad  \forall x\in\R^d,
\end{align}
where
\[z = x - \frac{1}{L} (\nabla f(x) - \nabla f(x^*)).\]
By inserting $z$ into~\eqref{eq:smoothnessfuncstar} we can equivalently write~\eqref{eq:smoothnessfuncstar} as
\begin{align} \label{eq:smoothnessfuncstar2}
f\big(x- (1/L) (\nabla f(x) - \nabla f(x^*))\big) \leq f(x) -\frac{1}{2L} \norm{\nabla f(x)}^2 +\frac{1}{2L} \norm{\nabla f(x^*)}^2.
\end{align}
 \end{definition}

\begin{lemma}\label{lem:smoothsubopt} Let $f: \R^d \mapsto \R$ be  differentiable 
and suppose $f$ has a minimizer $x^* \in \R^d.$ Furthermore, let $f$ be
 $L$--smooth function along the $x^*$--direction according to Definition~\ref{def:fisgdstarsmooth}.
It follows that
\begin{equation}\label{eq:smoothsubopt}
\norm{\nabla f(x) }^2 \leq 2 L (f(x) - f(x^*)).
\end{equation}
\end{lemma}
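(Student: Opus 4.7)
The plan is to exploit the single descent step $z = x - \tfrac{1}{L}(\nabla f(x) - \nabla f(x^*))$ built into Definition~\ref{def:fisgdstarsmooth}, combined with the fact that $x^*$ is a minimizer. Since $f$ is differentiable and $x^*\in \R^d$ is a minimizer, the first-order optimality condition gives $\nabla f(x^*) = 0$. Substituting into the equivalent form~\eqref{eq:smoothnessfuncstar2} of smoothness along the $x^*$-direction, the term $\tfrac{1}{2L}\norm{\nabla f(x^*)}^2$ vanishes, leaving
\[
f\bigl(x - (1/L)\nabla f(x)\bigr) \;\leq\; f(x) - \frac{1}{2L}\norm{\nabla f(x)}^2.
\]

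Next I would lower-bound the left-hand side by $f(x^*)$, which is permissible because $x^*$ globally minimizes $f$. This yields
\[
f(x^*) \;\leq\; f(x) - \frac{1}{2L}\norm{\nabla f(x)}^2,
\]
and rearranging gives exactly the claimed inequality $\norm{\nabla f(x)}^2 \leq 2L(f(x) - f(x^*))$.

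There is no real obstacle: the lemma is a one-line consequence of the directional smoothness inequality and the optimality condition $\nabla f(x^*)=0$. The only subtle point worth flagging is that the definition allows the smoothness inequality to hold only at the specific point $z$ rather than everywhere, but since this single $z$ is exactly the standard gradient-descent step from $x$ with stepsize $1/L$, it suffices for the classical ``descent lemma'' argument. Thus the proof will consist of: (i) invoke $\nabla f(x^*) = 0$; (ii) apply~\eqref{eq:smoothnessfuncstar2}; (iii) use $f(x^*) \leq f(z)$; (iv) rearrange.
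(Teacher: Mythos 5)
Your proposal is correct and follows essentially the same route as the paper: both use first-order optimality to get $\nabla f(x^*)=0$, plug into the equivalent form~\eqref{eq:smoothnessfuncstar2} so that $z = x - (1/L)\nabla f(x)$, lower bound $f(z)$ by $f(x^*)$ via minimality, and rearrange. No gaps.
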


\begin{proof}
Since $x^*$ is a minimizer of $f$ we have that $\nabla f(x^*) =0.$ Furthermore, 
since $f$ is $L$--smooth function along the $x^*$--direction we have by re-arranging~\eqref{eq:smoothnessfuncstar2} that
\[  f(x^*) - f(x) \leq f\big(x- (1/L) \nabla f(x)\big) -f(x) \overset{\eqref{eq:smoothnessfuncstar2}}{\leq} -\frac{1}{2L} \norm{\nabla f(x)}^2.  \]
Re-arranging the above gives~\eqref{eq:smoothsubopt}.
\end{proof}

Now we provide a lemma that will then be used to establish the simplest and most minimalistic assumptions that imply the expected residual (\ref{eq:expresidual}) condition (Assumption~\ref{ass:expresidual}).

\begin{lemma}\label{lem:smoothconvexaroundxst} 
Suppose these exists $x^* \in \R^d$ where
\[x^* \in \argmin \left\{ f(x) \eqdef  \frac{1}{n} \sum_{i=1} f_i(x)\right\},\] 
such that each $f_i$ is convex around $x^*$, that is
\begin{align}
f_i(x^*) -f_i(x) & \leq  \dotprod{\nabla f_i(x^*), x^*-x}, \quad  \forall x\in\R^d,\label{eq:conv}
\end{align}
and each $f_i$ is $L_i$--smooth along the $x^*$--direction according Definition~\ref{def:fisgdstarsmooth}.
It follows for every $i\in\{1,\ldots, n\}$ that
\begin{equation}\label{eq:convandsmooth}
 \norm{\nabla f_i(x) - \nabla f_i(x^*)}^2  \leq 2 L_{i} (f_i(x) -f_i(x^*) - \dotprod{\nabla f_i(x^*), x-x^*}), \quad \forall x\in\R^d.
\end{equation}
 \end{lemma}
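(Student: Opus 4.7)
The plan is to reduce the claim to Lemma~\ref{lem:smoothsubopt} applied to the ``centered'' function
\[
h_i(x) \eqdef f_i(x) - f_i(x^*) - \dotprod{\nabla f_i(x^*), x - x^*},
\]
which captures exactly the right-hand side of \eqref{eq:convandsmooth} and whose gradient is $\nabla h_i(x) = \nabla f_i(x) - \nabla f_i(x^*)$, matching the left-hand side. In particular, $h_i(x^*)=0$ and $\nabla h_i(x^*)=0$.

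First I would check that $x^*$ is a minimizer of $h_i$. This follows directly from \eqref{eq:conv}: rearranging gives $f_i(x) - f_i(x^*) - \dotprod{\nabla f_i(x^*), x - x^*} \geq 0$, i.e.\ $h_i(x) \geq 0 = h_i(x^*)$ for all $x$. So $h_i$ has a global minimizer at $x^*$, which is the hypothesis Lemma~\ref{lem:smoothsubopt} needs.

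Next I would verify that $h_i$ is $L_i$--smooth along the $x^*$--direction in the sense of Definition~\ref{def:fisgdstarsmooth}. The critical observation is that since $\nabla h_i(x^*)=0$, the relevant point $z$ for $h_i$ is $z = x - (1/L_i)\nabla h_i(x) = x - (1/L_i)(\nabla f_i(x) - \nabla f_i(x^*))$, which is \emph{the same} point that appears in the smoothness assumption on $f_i$. A direct substitution then shows
\[
h_i(z) - h_i(x) - \dotprod{\nabla h_i(x), z - x} = f_i(z) - f_i(x) - \dotprod{\nabla f_i(x), z - x} \;\leq\; \tfrac{L_i}{2}\norm{z-x}^2,
\]
where the inequality is the $L_i$--smoothness of $f_i$ along the $x^*$--direction evaluated at that same $z$.

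With these two properties established, Lemma~\ref{lem:smoothsubopt} applied to $h_i$ yields
\[
\norm{\nabla h_i(x)}^2 \;\leq\; 2L_i\bigl(h_i(x) - h_i(x^*)\bigr) \;=\; 2L_i\, h_i(x),
\]
which is precisely \eqref{eq:convandsmooth}. The only real subtlety is the bookkeeping in the previous paragraph, since Definition~\ref{def:fisgdstarsmooth} only guarantees the smoothness inequality at a single prescribed point $z$; the main obstacle is therefore making sure that the $z$ arising from $h_i$ coincides with the $z$ guaranteed for $f_i$, which works out exactly because $\nabla h_i$ and $\nabla f_i - \nabla f_i(x^*)$ agree.
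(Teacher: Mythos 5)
Your proof is correct, but it is organized differently from the paper's. The paper proves \eqref{eq:convandsmooth} by a direct computation: it splits $f_i(x^*)-f_i(x) = \bigl(f_i(x^*)-f_i(z)\bigr) + \bigl(f_i(z)-f_i(x)\bigr)$ with $z = x - \tfrac{1}{L_i}\bigl(\nabla f_i(x)-\nabla f_i(x^*)\bigr)$, bounds the first difference by \eqref{eq:conv} evaluated at $z$ and the second by the directional smoothness inequality, then substitutes $z$ and simplifies. You instead reduce to Lemma~\ref{lem:smoothsubopt} applied to the centered function $h_i(x) \eqdef f_i(x)-f_i(x^*)-\dotprod{\nabla f_i(x^*),x-x^*}$: the hypothesis \eqref{eq:conv} is repackaged as ``$x^*$ minimizes $h_i$'', and the one delicate point --- that Definition~\ref{def:fisgdstarsmooth} only gives the smoothness inequality at a single prescribed $z$ --- is handled correctly, since $\nabla h_i(x^*)=0$ forces the $z$ prescribed for $h_i$ to coincide with the $z$ prescribed for $f_i$, and the linear term $\dotprod{\nabla f_i(x^*),z-x}$ cancels in $h_i(z)-h_i(x)-\dotprod{\nabla h_i(x),z-x}$. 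Underneath, both arguments use exactly the same two ingredients at the same point $z$ (indeed, invoking the minimality of $h_i$ at $z$ inside Lemma~\ref{lem:smoothsubopt} is precisely \eqref{eq:conv} evaluated at $z$), so the algebra is the same; what your route buys is reuse of Lemma~\ref{lem:smoothsubopt} and a conceptual reading of the result as ``the Bregman-type residual $h_i$ satisfies the self-bounding gradient inequality'', at the cost of the extra verification that $h_i$ inherits the directional smoothness, which the paper's self-contained substitution avoids.
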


\begin{proof}
Fix $i\in \{1,\ldots, n\}$. To prove~\eqref{eq:convandsmooth}, it follows that
\begin{eqnarray}
f_i(x^*) -f_i(x) & = & f_i(x^*) -f_i(z)+f_i(z) - f_i(x)\nonumber\\ 
&\overset{\eqref{eq:conv}+\eqref{eq:smoothnessfuncstar} } \leq &
\dotprod{\nabla f_i(x^*), x^*-z} + \dotprod{\nabla f_i(x), z-x} +\frac{L_i}{2}\norm{z-x}^2,\label{eq:tempsanuin}
\end{eqnarray}
where 
\begin{equation}
z = x - \frac{1}{L_i}(\nabla f_i(x) -\nabla f_i(x^*)).
\end{equation}
Substituting this in $z$ into~\eqref{eq:tempsanuin} gives
\begin{eqnarray}
f_i(x^*) -f_i(x) & = &
\dotprod{\nabla f_i(x^*), x^*-x + \frac{1}{L_i}(\nabla f_i(x) -\nabla f_i(x^*))} - \frac{1}{L_i}\dotprod{\nabla f_i(x), \nabla f_i(x) -\nabla f_i(x^*)} \nonumber \\
& & \quad +\frac{1}{2L_i}\norm{\nabla f_i(x) -\nabla f_i(x^*)}^2 \nonumber \\
& =&\dotprod{\nabla f_i(x^*), x^*-x}   - \frac{1}{L_i}\norm{\nabla f_i(x)-\nabla f_i(x^*)}^2 +\frac{1}{2L_i}\norm{\nabla f_i(x) -\nabla f_i(x^*)}^2 \nonumber \\
&= & \dotprod{\nabla f_i(x^*), x^*-x}   - \frac{1}{2L_i}\norm{\nabla f_i(x)-\nabla f_i(x^*)}^2.\nonumber
\end{eqnarray}
\end{proof} 

Now we present a corollary of the previous lemma for over-parametrized functions
We now develop an immediate consequence of each $f_i$ being convex around $x^*$ and smooth along the   $x^*$-- direction.
\begin{corollary}\label{lem:smoothconvexaroundxstCorollary} 
Suppose these exists $x^* \in \R^d$ where
\[x^* \in \argmin \left\{ f(x) \eqdef  \frac{1}{n} \sum_{i=1} f_i(x)\right\}.\]
Suppose the interpolated Assumption~\ref{ass:over} holds. Furthermore, suppose that for each $f_i$ there exists $L_i$ such that
\begin{align} \label{eq:smoothnessfuncstarover}
f_i\left(x- \frac{1}{L_i} \nabla f_i(x)\right) \leq f_i(x) -\frac{1}{2L_i} \norm{\nabla f_i(x)}^2.
\end{align}
It follows for every $i\in\{1,\ldots, n\}$ that
\begin{equation}\label{eq:convandsmoothover}
 \norm{\nabla f_i(x) - \nabla f_i(x^*)}^2  \leq 2 L_{i} (f_i(x) -f_i(x^*)). \quad \forall x\in\R^d.
\end{equation}
 \end{corollary}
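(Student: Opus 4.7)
The plan is to reduce the statement to a direct per-component application of Lemma~\ref{lem:smoothsubopt}. By the interpolation Assumption~\ref{ass:over}, for each $i$ the point $x^*$ is a global minimizer of $f_i$, so in particular $\nabla f_i(x^*)=0$. Consequently, the left-hand side $\norm{\nabla f_i(x)-\nabla f_i(x^*)}^2$ in~\eqref{eq:convandsmoothover} collapses to $\norm{\nabla f_i(x)}^2$, and it suffices to show
\[
\norm{\nabla f_i(x)}^2 \;\leq\; 2L_i\bigl(f_i(x)-f_i(x^*)\bigr).
\]

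First I would observe that the hypothesis~\eqref{eq:smoothnessfuncstarover} is exactly the form~\eqref{eq:smoothnessfuncstar2} specialized to the case $\nabla f_i(x^*)=0$: the last term $\tfrac{1}{2L_i}\norm{\nabla f_i(x^*)}^2$ vanishes. Hence each $f_i$ is $L_i$--smooth along the $x^*$--direction in the sense of Definition~\ref{def:fisgdstarsmooth}, with $x^*$ as its own minimizer. Lemma~\ref{lem:smoothsubopt} applied to $f_i$ (with $x^*$ in place of the generic minimizer) then gives exactly $\norm{\nabla f_i(x)}^2 \leq 2L_i(f_i(x)-f_i(x^*))$, which is the desired~\eqref{eq:convandsmoothover}.

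Since this is a two-line deduction, there is no substantial obstacle; the only thing to verify carefully is that the hypothesis~\eqref{eq:smoothnessfuncstarover} is the correct reduced form of~\eqref{eq:smoothnessfuncstar2} once $\nabla f_i(x^*)=0$ is plugged in, and that interpolation is what supplies $\nabla f_i(x^*)=0$. Thus the proof is essentially a rewrite: apply interpolation to zero out $\nabla f_i(x^*)$, invoke Lemma~\ref{lem:smoothsubopt} on each $f_i$, and conclude. No convexity of $f_i$ around $x^*$ needs to be assumed here, in contrast with the preceding Lemma~\ref{lem:smoothconvexaroundxst}, precisely because interpolation already forces $\nabla f_i(x^*)=0$ and hence kills the inner-product term $\langle \nabla f_i(x^*), x-x^*\rangle$ that appears in~\eqref{eq:convandsmooth}.
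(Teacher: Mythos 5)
Your proof is correct, and it reaches the conclusion by a slightly different (and marginally more economical) route than the paper. The paper's own proof notes that interpolation makes each $f_i$ convex around $x^*$ (trivially, since $\nabla f_i(x^*)=0$) and that the hypothesis~\eqref{eq:smoothnessfuncstarover} coincides with~\eqref{eq:smoothnessfuncstar2} once $\nabla f_i(x^*)=0$, and then invokes Lemma~\ref{lem:smoothconvexaroundxst}, whose conclusion~\eqref{eq:convandsmooth} reduces to~\eqref{eq:convandsmoothover} because the inner-product term vanishes. You instead bypass the $x^*$--convexity detour entirely and apply the simpler Lemma~\ref{lem:smoothsubopt} to each $f_i$ separately, using that interpolation makes $x^*$ a global minimizer of every $f_i$ (which is exactly the hypothesis that lemma needs) and that~\eqref{eq:smoothnessfuncstarover} is the $\nabla f_i(x^*)=0$ form of Definition~\ref{def:fisgdstarsmooth}. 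Both arguments rest on the same two facts --- interpolation forces $\nabla f_i(x^*)=0$, and the one-step descent inequality along the $x^*$--direction --- and indeed Lemma~\ref{lem:smoothconvexaroundxst} specialised to $\nabla f_i(x^*)=0$ collapses to the computation inside Lemma~\ref{lem:smoothsubopt}; your route simply cites the lemma whose hypotheses are a strict subset of what interpolation supplies, which makes the deduction a touch cleaner, while the paper's route emphasises that the corollary is the interpolated special case of the more general $x^*$--convex statement.
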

 \begin{proof}
 Note that for interpolated functions we have that each $f_i$ is convex around $x^*$. Furthermore,  since each $\nabla f_i(x^*) =0$ we have that~\eqref{eq:smoothnessfuncstar2} holds, and thus $f_i$ is smooth in the $x^*$--direction according  to Definition~\ref{def:fisgdstarsmooth}. Finally all the conditions of Lemma~\ref{lem:smoothconvexaroundxst} holds, and thus so does~\eqref{eq:convandsmoothover} holds.
 \end{proof}

\section{Proofs of results on Expected Residual}
\label{ProofsExpectedResidual}
\subsection{Proof of Lemma~\ref{lem:varbndrho}}
\begin{proof}
Using
\begin{align*}
\norm{g(x) - \nabla f(x) }^2  & \leq 2\norm{g(x) -g(x^*)- \nabla f(x) }^2  +2\norm{g(x^*)}^2,
\end{align*}
and taking expectation  together with~\eqref{eq:expresidual} and $\nabla f(x^*) =0$  gives
\begin{align*}
\E{\norm{g(x) - \nabla f(x) }^2}  & \leq 4\rho (f(x) -f(x^*))  +2\EE{\cD}{\norm{g(x^*)}^2}.
\end{align*}
Taking the supremum over $x^* \in \cX^*$ and using
~Assumption~\ref{ass:grad-noise}  and that
$\E{\norm{X-\E{X}}^2} = \E{\norm{X}^2} - \norm{\E{X}}^2$ with $X =g(x) $ gives~\eqref{eq:varbndrho2}.
\end{proof}

\subsection{Proof of Proposition~\ref{prop:bniceconst} and its expansion to all samplings.}
\label{sec:lemmaexpres}

In this section we  give an expanded version of Proposition~\ref{prop:bniceconst}  that also gives bounds for the \emph{Expected Smoothness assumption} (ES), a closely related assumption to the Expected Residual condition.

\begin{assumption}[Expected smoothness]\label{ass:expsmooth} 
We say that the stochastic gradient $g$ satisfy the expected smoothness assumption if for all $x \in \mathbb{R}^d$, there exists $\cL =\cL(g) >0$ such that
\begin{equation}
\label{eq:expsmooth}\tag{ES}
\EE{\cD}{\norm{g(x) -g(x^*)  }^2} \leq 2\cL\left(f(x)-f(x^*) \right).
\end{equation}
We use  $g \in \text{ES}(\cL)$ as shorthand for expected smoothness.
\end{assumption}

Here we show that a sufficient condition for the expected smoothness and the expected residual conditions~\ref{ass:expsmooth} and~\ref{ass:expresidual} to hold if that each $f_i$ is convex around $x^*$ and smooth. Furthermore, we  give tight bounds on the expected smoothness $\cL$ and the expected residual constant $\rho$ for when $v$ is an independent sampling and, in particular, a $b$--minibatch sampling.

In the main text our minibatch results are stated only for $b$--minibatching. But they actually hold for a large family of sampling that we refer to as the \emph{independent samplings.}
\begin{definition}[Independent sampling]\label{def:indep}
Let $S \subset \{1,\ldots, n\}$ be a random set and let 
  let $v = \sum_{i\in S} \frac{1}{p_i} e_i$ which is a sampling vector. 
Suppose there exists a constant $c_2>0$ such that
\begin{equation}\label{eq:indep}
 \frac{\Prob{i,j \in S}}{\Prob{i \in S} \Prob{j \in S}} = c_2, \quad \forall i,j \in \{1,\ldots, n\}, \; i\neq j. 
\end{equation}
\end{definition}
In~\cite{gower2019sgd} it was proven that  an independent sampling vector is indeed a valid sampling vector. For completeness we also give the proof  in
In Lemma~\ref{lem:vpisample}.  Furthermore, all the samplings presented in~\cite{gower2019sgd} are examples of an independent sampling vector.
In particular the minibatch sampling in Definition~\ref{def:minibatch} is also an independent sampling.  Finally, note that~\eqref{eq:indep} does not imply that $i \in S$ and $j \in S$ are independent events unless $c_2 =1.$ Indeed, for $b$--minibatch sampling we have that $\Prob{i \in S} = \frac{b}{n} = \Prob{j \in S}$ and $\Prob{i,j \in S} = \frac{b}{n}\frac{b-1}{n-1}$ and thus they are not independent events yet satisfy~\eqref{eq:indep} with $c_2 = \frac{n}{b}\frac{b-1}{n-1}.$

The following Proposition is based on the proof of  Proposition 3.8 in~\cite{gower2019sgd} with the exception that now we show that only convexity around $x^*$ is required for the proof to follow, as opposed to assuming convexity everywhere.

\begin{proposition}  \label{prop:master_lemma}
 Let $f$ be a finite sum problem $f = \frac{1}{n} \sum_{i=1}^n f_i$. Let $f_i$ be $L_i$--smooth  and convex around $x^*$ according to~\eqref{def:fisgdstarsmooth} and~\eqref{eq:conv}, respectively.
It follows that 
\begin{enumerate}
\item  If  $v$ is a sampling vector  then the expected smoothness and expected residual conditions hold $g \in \text{ES}(\cL)$ and $g \in \text{ER}(\rho)$ with $\cL = \max_v \frac{1}{n}\sum_{i=1}^n L_i v_i =\rho.$ 
\item  If  $v$ is an independent sampling vector according to Definition~\ref{def:indep} then we have that
    \begin{align}
        \cL & = c_2 L +\max_{i=1,\ldots, n}\frac{L_i}{np_i}\left(1 -p_ic_2 \right). \label{eq:expsmoothest}\\
       \rho & =   \frac{\lambda_{\max}(\E{(v - \mathbf{1}) (v - \mathbf{1})^\top})}{n}L_{\max} \label{eq:expresidualest}
\end{align}
\item  If $v$ is the $b$--minibatch sampling with replacement then
\begin{align}
\label{eq:sigmaminisupp} 
\sigma^2 &=\frac{1}{b} \frac{n-b}{n-1} \sigma_1^2\\
\label{eq:rhominisupp} 
\rho &= \frac{1}{b} \frac{n-b}{n-1}L_{\max}\\
\label{eq:cLminisupp} 
\cL & = \frac{n}{b}\frac{b-1}{n-1}L+\frac{1}{b}\frac{n-b}{n-1} L_{\max}.
\end{align}
 
\end{enumerate}

\end{proposition}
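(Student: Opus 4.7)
The plan is to use Lemma~\ref{lem:smoothconvexaroundxst} as the single analytic workhorse, and then handle the three parts by progressively exploiting more structure of the sampling vector. Throughout, I will write $y_i \eqdef \nabla f_i(x) - \nabla f_i(x^*)$ and
$D_i(x) \eqdef f_i(x) - f_i(x^*) - \dotprod{\nabla f_i(x^*), x - x^*} \geq 0$, so that Lemma~\ref{lem:smoothconvexaroundxst} gives $\norm{y_i}^2 \leq 2 L_i D_i(x)$. Because $\nabla f(x^*) = 0$, averaging yields the clean identity $\frac{1}{n}\sum_i D_i(x) = f(x) - f(x^*)$, which will convert everything back to suboptimality at the end.

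For part~1, I will write $g(x) - g(x^*) = \frac{1}{n}\sum_i v_i y_i$ and apply the weighted Jensen inequality $\norm{\sum_i w_i u_i}^2 \leq (\sum_i w_i)(\sum_i w_i \norm{u_i}^2)$ with the nonnegative weights $w_i = v_i L_i$ and vectors $u_i = y_i / L_i$. This produces
\[
\norm{g(x) - g(x^*)}^2 \leq \frac{1}{n^2}\Big(\sum_i v_i L_i\Big)\Big(\sum_i v_i \frac{\norm{y_i}^2}{L_i}\Big) \leq \frac{2\sum_i v_i L_i}{n^2} \sum_i v_i D_i(x).
\]
Bounding the first factor by the deterministic worst-case value $\cL = \max_v \frac{1}{n}\sum_i v_i L_i$ and then taking expectation (using $\E{v_i} = 1$) gives the ES bound. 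For ER, the identity $\E{\norm{X - \E{X}}^2} \leq \E{\norm{X}^2}$ applied to $X = g(x) - g(x^*)$, whose expectation is $\nabla f(x) - \nabla f(x^*)$, yields $\rho \leq \cL$ for free.

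For part~2 I will pass to the second-moment matrix $M = \E{vv^\top}$ and the covariance $C = \E{(v-\mathbf{1})(v-\mathbf{1})^\top}$. Definition~\ref{def:indep} combined with $\E{v_i^2} = 1/p_i$ gives the rank-one-plus-diagonal decomposition $M = \mathrm{Diag}(1/p_i - c_2) + c_2 \mathbf{1}\mathbf{1}^\top$. Writing $Y$ for the $d\times n$ matrix with columns $y_i$, a direct expansion produces
\[
\E{\norm{g(x) - g(x^*)}^2} = \frac{1}{n^2}\mathrm{tr}(Y M Y^\top) = \frac{1}{n^2}\sum_i (1/p_i - c_2)\norm{y_i}^2 + \frac{c_2}{n^2}\Big\|\sum_i y_i\Big\|^2.
\]
I then control the diagonal piece by $\norm{y_i}^2 \leq 2L_i D_i(x)$ (followed by $\max_i L_i(1-p_ic_2)/(np_i)$) and the rank-one piece by $\|\sum_i y_i\|^2 = n^2\norm{\nabla f(x) - \nabla f(x^*)}^2 \leq 2n^2 L(f(x) - f(x^*))$, which is available because $f$ is $L$-smooth and $x^*$-convex. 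Combining gives~\eqref{eq:expsmoothest}. The analogous expansion for ER plus the eigenvalue bound $\mathrm{tr}(Y C Y^\top) \leq \lambda_{\max}(C)\sum_i \norm{y_i}^2$ and $\norm{y_i}^2 \leq 2L_{\max}D_i(x)$ yields~\eqref{eq:expresidualest}.

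For part~3 I will just specialize: the $b$-minibatch sampling is an independent sampling with $p_i = b/n$ and $c_2 = n(b-1)/(b(n-1))$, so substituting into~\eqref{eq:expsmoothest} gives~\eqref{eq:cLminisupp}, while a short calculation identifies $C$ as a scalar multiple of the projector $I - \frac{1}{n}\mathbf{1}\mathbf{1}^\top$ onto $\mathbf{1}^\perp$, yielding $\lambda_{\max}(C) = n(n-b)/(b(n-1))$ and hence~\eqref{eq:rhominisupp}. For $\sigma^2$ I repeat the matrix expansion at $x=x^*$ with $z_i \eqdef \nabla f_i(x^*)$; the $\mathbf{1}\mathbf{1}^\top$ term vanishes because $\sum_i z_i = n\nabla f(x^*) = 0$, leaving~\eqref{eq:sigmaminisupp} after taking the supremum over $\cX^*$. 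The main obstacle is the algebra in part~2: getting the decomposition of $M$ right and separating the diagonal and rank-one contributions without losing tightness, together with keeping the role of the $L$-smoothness of $f$ and the $L_i$-smoothness of the $f_i$'s straight. Once that is in place, part~3 is routine specialization and part~1 is a one-line application of weighted Jensen plus the variance identity.
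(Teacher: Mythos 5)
Your proof is correct and follows the same skeleton as the paper's: Lemma~\ref{lem:smoothconvexaroundxst} applied componentwise, a split of the second moment into a diagonal part plus a rank-one $c_2\mathbf{1}\mathbf{1}^\top$ part for independent samplings, and the trace--eigenvalue bound for the expected residual. The differences are mostly in packaging, but two are worth recording. In part~1 the paper first shows that $f_v$ is $L_v$--smooth and convex around $x^*$ and applies the lemma to $f_v$ itself, whereas you apply the lemma to each $f_i$ and recombine with weighted Jensen (weights $v_iL_i$); both give the same constants $\cL=\rho=\max_v \frac{1}{n}\sum_i v_iL_i$, and your ER step via $\E{\norm{X-\E{X}}^2}\leq \E{\norm{X}^2}$ is exactly how the paper passes from ES to ER. In part~2 your identity $\E{vv^\top}=\Diag{1/p_i-c_2}+c_2\mathbf{1}\mathbf{1}^\top$ is the matrix form of the paper's double-counting computation, and your bound $\trace{Y^\top Y\, C}\leq \lambda_{\max}(C)\sum_i\norm{y_i}^2$ is precisely the paper's argument for \eqref{eq:expresidualest}; note that both you and the paper implicitly use $1-p_ic_2\geq 0$, which holds since $c_2 p_i p_j=\Prob{i,j\in S}\leq p_j$. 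The genuine added value is in part~3: the paper obtains $\rho(b)$ and $\sigma^2(b)$ by citing Proposition~3.8 of \cite{gower2019sgd} and Lemma~F.3 of \cite{sebbouh2019towards}, while you derive them directly, computing $\lambda_{\max}\left(\E{(v-\mathbf{1})(v-\mathbf{1})^\top}\right)=\frac{n(n-b)}{b(n-1)}$ from the projector structure of the covariance and evaluating the moment expansion at $x^*$, where the rank-one term vanishes because $\nabla f(x^*)=0$; this makes the proposition self-contained and is a small improvement over the paper's presentation.
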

\begin{proof}
\begin{enumerate}
\item 
Assume that $v$ is any sampling vector. Since $f_i$ is $L_i$--smooth and convex around $x^*$ we have that by multiplying each side of 
\begin{align*}
 f_i(z) -f_i(x) &\leq  \dotprod{\nabla f_i(x), z-x} +\frac{L_i}{2}\norm{z-x}^2\\
 f_i(x^*) -f_i(x) & \leq  \dotprod{\nabla f_i(x^*), x^*-x},
\end{align*}
 by $v_i/n$ and summing up over $i=1,\ldots ,n$ bearing in mind that $v_i \geq 0$ we have that
 \begin{align*}
 f_v(z) -f_v(x) &\leq  \dotprod{\nabla f_v(x), z-x} +\frac{\frac{1}{n}\sum_{i=1}^n v_iL_i}{2}\norm{z-x}^2\\
 f_v(x^*) -f_v(x) & \leq  \dotprod{\nabla f_v(x^*), x^*-x}.
\end{align*}
Consequently $f_v$ is convex and $x^*$ and is $L_v$--smooth where $L_v \eqdef \frac{1}{n}\sum_{i=1}^n  v_i L_i$.
Applying Lemma~\ref{lem:smoothconvexaroundxst} we thus have that
\begin{equation}\label{eq:convandsmoothv}
 \norm{\nabla f_v(x) - \nabla f_v(x^*)}^2  \leq L_{v} (f_v(x) -f_v(x^*) - \dotprod{\nabla f_v(x^*), x-x^*}), \quad \forall x\in\R^d.
\end{equation}
Taking expectation gives
\begin{align*}
 \E{\norm{\nabla f_v(x) - \nabla f_v(x^*)}^2} & \leq \E{ L_{v} (f_v(x) -f_v(x^*) - \dotprod{\nabla f_v(x^*), x-x^*})} \\
 & \leq \max_v L_v \E{ (f_v(x) -f_v(x^*) - \dotprod{\nabla f_v(x^*), x-x^*})} \\
 & = \max_v L_v  (f(x) -f(x^*)).
\end{align*}
This proves that the expected smoothness assumption holds with $\cL =\max_v L_v. $ Consequently by Theorem~\ref{theo:hierarchy} we have that the expected residual condition holds with $\rho = \cL.$

\item
Assume that $v_i$ is an independent sampling. 
First we prove~\eqref{eq:expsmoothest}. 

Since $f_i$ is $L_i$--smooth and convex around $x^*$ we have that $f$ is $L$--smooth and convex around $x^*$ and
by Lemma~\ref{lem:smoothconvexaroundxst}
\begin{align}
\label{eq:procL1}
\|\nabla f_i(x) - \nabla f_i(x^*)\|^2  \leq 2L_i(f_i(x) - f_i(x^*) - \langle \nabla f_i(x^*), x-x^*\rangle )\\
\label{eq:procL2}
\|\nabla f(x) - \nabla f(x^*)\|^2  \leq 2L(f(x) - f(x^*) - \langle \nabla f(x^*), x-x^*\rangle ). 
\end{align}

Noticing that 
\begin{eqnarray*}
\|\nabla f_v(x) - \nabla f_v(x^*)\|^2 &=& \frac{1}{n^2} \left \|\sum_{i\in S}\frac{1}{p_i}(\nabla f_i(x) - \nabla f_i(x^*)) \right \|^2 \\
&=& \sum_{i,j\in S} \left\langle \frac{1}{np_i}(\nabla f_i(x) - \nabla f_i(x^*)), \frac{1}{np_j}(\nabla f_j(x) - \nabla f_j(x^*)) \right\rangle,
\end{eqnarray*}
we have 
\begin{eqnarray*}
\mathbb{E}[\|\nabla f_v(x) - \nabla f_v(x^*)\|^2] &=& \sum_C p_C  \sum_{i,j\in C} \left\langle \frac{1}{np_i}(\nabla f_i(x) - \nabla f_i(x^*)), \frac{1}{np_j}(\nabla f_j(x) - \nabla f_j(x^*)) \right\rangle \\ 
&=& \sum_{i, j=1}^n \sum_{C: i,j\in C }p_C  \left\langle \frac{1}{np_i}(\nabla f_i(x) - \nabla f_i(x^*)), \frac{1}{np_j}(\nabla f_j(x) - \nabla f_j(x^*)) \right\rangle \\ 
&=& \sum_{i, j=1}^n \frac{\Prob{i,j \in S}}{p_ip_j} \left\langle \frac{1}{n}(\nabla f_i(x) - \nabla f_i(x^*)), \frac{1}{n}(\nabla f_j(x) - \nabla f_j(x^*)) \right\rangle,
\end{eqnarray*}
where we used a double counting argument in the 2nd equality.
Now since $\Prob{i,j \in S}/(p_ip_j) = c_2$ for $i \neq j.$ Recalling that $\Prob{i,i \in S}=p_i$ we have from the above that
\begin{eqnarray*}
\mathbb{E}[\|\nabla f_v(x) - \nabla f_v(x^*)\|^2] &=& 
\sum_{i \neq j} c_2 \left\langle \frac{1}{n}(\nabla f_i(x) - \nabla f_i(x^*)), \frac{1}{n}(\nabla f_j(x) - \nabla f_j(x^*)) \right\rangle  \\
& &\qquad + \sum_{i=1}^n\frac{1}{n^2} \frac{1}{p_i} \norm{\nabla f_i(x) - \nabla f_i(x^*))}^2 \\
&= &  \sum_{i,j=1}^n c_2 \left\langle \frac{1}{n}(\nabla f_i(x) - \nabla f_i(x^*)), \frac{1}{n}(\nabla f_j(x) - \nabla f_j(x^*)) \right\rangle \\
& &+ \sum_{i=1}^n\frac{1}{n^2} \frac{1}{p_i}\left(1 -p_ic_2 \right) \norm{\nabla f_i(x) - \nabla f_i(x^*))}^2\\
& \overset{\eqref{eq:procL1}}{\leq} &  c_2 \norm{\nabla f(x) - \nabla f(x^*)}^2 \\
& &+ 2 \sum_{i=1}^n\frac{1}{n^2} \frac{L_i}{p_i}\left(1 -p_ic_2 \right) (f_i(x) - f_i(x^*) - \langle \nabla f_i(x^*), x-x^*\rangle ) \\
& \overset{\eqref{eq:procL2}}{\leq} & 2\left(c_2 L +\max_{i=1,\ldots, n}\frac{L_i}{np_i}\left(1 -p_ic_2 \right)  \right) (f(x) - f(x^*) - \langle \nabla f(x^*), x-x^*\rangle ).
\end{eqnarray*}
Comparing the above to the definition of expected smoothness~\eqref{eq:expsmooth} we have that
\begin{equation} \label{eq:CLinterpolc2}
\cL \quad \leq \quad  c_2 L +\max_{i=1,\ldots, n}\frac{L_i}{np_i}\left(1 -p_ic_2 \right).
\end{equation}

Now we will prove that
\begin{equation}\label{eq:tempsnioenrs}
    \E{\norm{\nabla f_v (w) -\nabla f_v (x^*) - ( \nabla f(w)-\nabla f(x^*)) }^2} \leq 2\rho \left(f(w)-f(x^*) \right),
\end{equation}
holds with the constant given in~\eqref{eq:expresidualest}.
First we expand the squared norm on the left hand side of~\eqref{eq:tempsnioenrs}. Define $DF(w) = [\nabla f_1(w), \ldots, \nabla f_n(w)] \in \R^{d \times n}$ as the Jacobian of $F(w) \overset{def}{=} [f_1(w),\dots,f_n(w)]$. We denote $\mR \eqdef \left(DF (w) - DF (x^*)\right)$. It follows that
\begin{align*}
    C &\eqdef \norm{\nabla f_v (w) -\nabla f_v (x^*) - (\nabla f(w)-\nabla f(x^*)) }^2\\
    &= \frac{1}{n^2}\norm{\left(DF (w) - DF (x^*)\right) ( v - \mathbf{1})}^2\\
    &= \frac{1}{n^2} \langle \mR ( v - \mathbf{1}), \mR ( v - \mathbf{1}) \rangle_{\R^d}\\
    &=\frac{1}{n^2} \trace{(v - \mathbf{1})^\top \mR^\top \mR (v - \mathbf{1})}\\
    &= \frac{1}{n^2} \trace{\mR^\top \mR (v - \mathbf{1}) (v - \mathbf{1})^\top}.\\
\end{align*}
Let $\Var{v} \eqdef \E{(v - \mathbf{1}) (v - \mathbf{1})^\top}.$
Taking expectation,
\begin{eqnarray}
\E{C} &=& \frac{1}{n^2} \trace{\mR^\top \mR \Var{v}} \nonumber \\
& \leq & \frac{1}{n^2} \trace{\mR^\top \mR} \lambda_{\max}(\Var{v}). \label{eq:trace-eig}
\end{eqnarray}

Moreover, since the $f_i$'s are convex  around $x^*$ and $L_i$-smooth, it follows from~\eqref{eq:convandsmooth} that
\begin{eqnarray}
\trace{\mR^\top \mR} &=& \sum_{i=1}^n \norm{\nabla f_i(w) - \nabla f_i(x^*)}^2 \nonumber \\
&\leq& 2 \sum_{i=1}^n L_i (f_i(w) - f_i(x^*) - \langle \nabla f_i(x^*), w - x^* \rangle) \nonumber \\
&\leq& 2nL_{\max}(f(w) - f(x^*)). \label{eq:traceRtT}
\end{eqnarray}
Therefore,
\begin{eqnarray}
\E{C}\overset{\eqref{eq:trace-eig} + \eqref{eq:traceRtT}}{\leq}  2\frac{\lambda_{\max}(\Var{v})}{n}L_{\max} (f(w) - f(x^*)).
\end{eqnarray}
Which means
\begin{equation} \label{eq:residual_result}
    \rho = \frac{\lambda_{\max}(\Var{v})}{n}L_{\max}.
\end{equation}
\item Finally, if $v$ is a $b$--minibatch sampling, the specialized expressions for $\cL$ in~\eqref{eq:cLminisupp} follows by observing that $\Prob{i\in S} = p_i = \frac{b}{n}$, $\Prob{i,j\in S} = \frac{b}{n}\frac{b-1}{n-1} $ and consequently $c_2 =\frac{n}{b}\frac{b-1}{n-1}. $ The specialized expressions for 
 $\sigma$ and $\rho$ in~\eqref{eq:sigmaminisupp} and~\eqref{eq:rhominisupp} follow  
from Proposition 3.8~\cite{gower2019sgd} and  Lemma  F.3  in~\cite{sebbouh2019towards}, respectively.
\end{enumerate}
\end{proof}
     
\subsection{Proof of Theorem~\ref{theo:hierarchy}}
\label{sec:hierarchy}
First we include the formal definition of each of these assumptions named in Theorem~\ref{theo:hierarchy}. Let $g(x) = \nabla f_i(x)$ denote the stochastic gradient.
 The results in this section carry over verbatim by using $g(x) = \nabla f_v(x)$ and $f_i =f_v$ instead, where $v$ is a sampling vector. But since the sampling only affects the constants in each of the forthcoming assumptions, and here we are only interested in a hierarchy between assumptions, we omit the proof for a general sampling vector.  

First we repeat the definitions of
$ES$, $WGC$ and $SGC$ from Assumption 2~\cite{Khaled-nonconvex-2020}, Assumption 2.1 in~\cite{gower2019sgd}, Eq (7)  and Eq (2) in~\cite{vaswani2018fast}, respectively. 

\paragraph{SGC: Strong Growth Condition.} We say that $SGC$ holds with $\rho_{SGC}>0$ if
\begin{equation}\label{eq:SGC}
\E{\norm{g(x)}^2} \leq \rho_{SGC}\norm{\nabla f(x)}^2.
\end{equation}
\paragraph{WGC: Weak Growth Condition.} We say that $WGC$ holds with $\rho_{WGC}>0$ if
\begin{equation}\label{eq:WGC}
\E{\norm{g(x)}^2} \leq 2\rho_{WGC}(f(x) -f(x^*)).
\end{equation}
\paragraph{ES: Expected Smoothness.} We say that $ES$ holds with $\cL>0$ if
\begin{equation}\label{eq:ES}
\E{\norm{g(x) -g(x^*)}^2} \leq 2\cL(f(x) -f(x^*)).
\end{equation}
\paragraph{ER: Expected Residual.} We say that $ER$ holds with $\rho>0$ if
\begin{equation}\label{eq:ER}
\E{\norm{g(x) -g(x^*) - ( \nabla f(x)-\nabla f(x^*)) }^2} \leq 2\rho\left(f(x)-f(x^*) \right).
\end{equation}
In addition we will use
\paragraph{$x^*$--convex.} We say that $x^*$--convex holds if
\begin{equation}\label{eq:xstarconvex}
f_i(x^*) -f_i(x)  \leq  \dotprod{\nabla f_i(x^*), x^*-x}, \quad \mbox{for }i=1,\ldots, n.
\end{equation}
\paragraph{$L_i$--smoothness.} We say that $L_i$--smoothness holds for $L_i>0$ if
\begin{equation}\label{eq:Li}
 f_i(z) -f_i(x) \leq  \dotprod{\nabla f_i(x), z-x} +\frac{L_i}{2}\norm{z-x}^2, \quad  \forall x,z\in\R^d, \; i=1,\ldots, n.
\end{equation}
\paragraph{Interpolated.} We say that the interpolation condition holds at $x^*$ if
\begin{equation}\label{eq:LiInterpolated}
 f_i(x^*) \leq f_i(x), \quad \mbox{for }i=1,\ldots, n,  \mbox{and for every } x \in \R^d.
\end{equation}

An important assumption created recently~\cite{Khaled-nonconvex-2020} is the following $ABC$--assumption
\paragraph{ABC.} We say that $ABC$ holds with $A,B,C>0$ if
\begin{equation}\label{eq:ABC}
\E{\norm{g(x)}^2} \leq 2A(f(x) -f(x^*) +B\norm{\nabla f(x)}^2 + C.
\end{equation}
The $ABC$ condition~\eqref{eq:ABC} includes all previous assumptions SGC, WGC, ES and ER as a special case by choosing the three parameters $A,B$ and $C$ appropriately. In this sense, it is a family of assumptions. See~\cite{Khaled-nonconvex-2020} for more details on this assumption and how it linked to all the other assumptions.

Now we repeat the statement of Theorem~\ref{theo:hierarchy} for convenience.

\begin{theorem} \label{prop:hierarchyapp}
The following hierarchy holds
\[
{
\begin{array}
[c]{cccccccccccc}
\boxed{SGC+L\mbox{--smooth}}& \Longrightarrow& \boxed{WGC }& \Longrightarrow & \boxed{ES} & \Longrightarrow & \boxed{\colorbox{blue!20}{ER}}  &  \Longrightarrow & ABC \\
& &  &  &   \Uparrow & & &&\\
& & \boxed{L_i+\mbox{Interpolated}}& \Longrightarrow &  \boxed{ L_i+x^*\mbox{--convex} }& &  
\end{array}
}
\]
In addition we have that $ES(\cL)+ PL(\mu) \Rightarrow ER(\cL-\mu)$ and $ER\nRightarrow ES.$
\end{theorem}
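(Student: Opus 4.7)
The plan is to verify each implication in the hierarchy one by one via short algebraic manipulations, invoking either Lemma~\ref{lem:smoothsubopt} or the elementary bias-variance identity $\E{\norm{X - \E{X}}^2} = \E{\norm{X}^2} - \norm{\E{X}}^2$. I will handle the top row of implications first, then the bottom row, and finally the two extra statements about PL and the non-implication.

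For $SGC + L$-smooth $\Rightarrow WGC$, combine~\eqref{eq:SGC} with Lemma~\ref{lem:smoothsubopt} applied to $f$ to get $\E{\norm{g(x)}^2} \le \rho_{SGC}\norm{\nabla f(x)}^2 \le 2\rho_{SGC}L(f(x) - f^*)$, which is $WGC$ with $\rho_{WGC} = \rho_{SGC} L$. For $WGC \Rightarrow ES$, evaluate~\eqref{eq:WGC} at $x = x^*$: this forces $\E{\norm{g(x^*)}^2} = 0$, hence $g(x^*) = 0$ almost surely, and then $\E{\norm{g(x) - g(x^*)}^2} = \E{\norm{g(x)}^2} \le 2\rho_{WGC}(f(x) - f^*)$, giving $ES$ with $\cL = \rho_{WGC}$. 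For $ES \Rightarrow ER$, observe that since $\E{g} = \nabla f$, the quantity $g(x) - g(x^*) - (\nabla f(x) - \nabla f(x^*))$ is the centered version of $g(x) - g(x^*)$; the bias-variance identity gives
\[\E{\norm{g(x) - g(x^*) - (\nabla f(x) - \nabla f(x^*))}^2} = \E{\norm{g(x) - g(x^*)}^2} - \norm{\nabla f(x) - \nabla f(x^*)}^2,\]
so the bound is at most $\E{\norm{g(x) - g(x^*)}^2}$, and~\eqref{eq:ES} finishes the job with $\rho \le \cL$.

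For the bottom row, $L_i + \text{Interpolated} \Rightarrow L_i + x^*$-convex is immediate: interpolation gives $\nabla f_i(x^*) = 0$ and $f_i(x^*) \le f_i(x)$, which is exactly~\eqref{eq:xstarconvex}. The implication $L_i + x^*$-convex $\Rightarrow ES$ is the first conclusion of Proposition~\ref{prop:master_lemma} in Section~\ref{sec:lemmaexpres}; its engine is Lemma~\ref{lem:smoothconvexaroundxst}, which produces the per-index bound~\eqref{eq:convandsmooth} by introducing the auxiliary point $z = x - (\nabla f_i(x) - \nabla f_i(x^*))/L_i$ and exploiting smoothness only along the $x^*$-direction, after which averaging with the sampling weights $v_i$ yields $ES$ with $\cL = \max_v \frac{1}{n}\sum_i v_i L_i$. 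The arrow $ER \Rightarrow ABC$ is immediate from Lemma~\ref{lem:varbndrho}: the bound~\eqref{eq:varbndrho2} is itself of the form~\eqref{eq:ABC} with $A = 2\rho$, $B = 1$, $C = 2\sigma^2$.

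Two addenda remain. For $ES(\cL) + PL(\mu) \Rightarrow ER(\cL - \mu)$, note $\nabla f(x^*) = 0$, so the same bias-variance identity together with the PL inequality $\norm{\nabla f(x)}^2 \ge 2\mu(f(x) - f^*)$ give
\[\E{\norm{g(x) - g(x^*) - \nabla f(x)}^2} \le 2\cL(f(x) - f^*) - 2\mu(f(x) - f^*) = 2(\cL - \mu)(f(x) - f^*),\]
which is $ER(\cL - \mu)$. For $ER \nRightarrow ES$, take $n = 1$ with $f(x) = x^4$ and deterministic $g(x) = \nabla f(x) = 4x^3$; then the residual is identically zero so $ER$ holds with $\rho = 0$, whereas $ES$ reduces to $16x^6 \le 2\cL x^4$, which is false for large $|x|$. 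The only genuinely substantive step in the chain is the bottom-middle arrow $L_i + x^*$-convex $\Rightarrow ES$, because it requires the subtler smoothness-plus-convexity inequality around $x^*$ of Lemma~\ref{lem:smoothconvexaroundxst}; every other arrow collapses to one or two lines.
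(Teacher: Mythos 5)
Your proposal is correct and follows essentially the same route as the paper's own proof: the same arrow-by-arrow verification via Lemma~\ref{lem:smoothsubopt}, the expansion/bias-variance identity for $ES \Rightarrow ER$ (with the PL refinement folded in), Proposition~\ref{prop:master_lemma} for the $L_i + x^*$--convex arrow, Lemma~\ref{lem:varbndrho} for $ER \Rightarrow ABC$, and the same $f(x)=x^4$ full-batch counterexample for $ER \nRightarrow ES$. The only (immaterial) deviation is your constant $\rho_{WGC}=\rho_{SGC}L$, which is in fact the tighter bookkeeping compared with the paper's $2L\rho_{SGC}$.
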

\begin{proof} We first prove the top row of implications.\\\
\noindent {\bf 1.} {$SGC +L\mbox{--smooth} \implies WGC$.} Using Lemma~\ref{lem:smoothsubopt} and~\eqref{eq:SGC} we have that
\begin{eqnarray*}
\E{\norm{g(x)}^2} & \leq & \rho_{SGC}\norm{\nabla f(x)}^2 \\
&\overset{\eqref{eq:smoothsubopt}}{ \leq} & 2L \rho_{SGC} (f(x) -f(x^*)).
\end{eqnarray*}
   Thus~\eqref{eq:WGC} holds with $\rho_{WGC} =  2L \rho_{SGC}.$
   
\noindent {\bf 2.} {$WGC \implies ES$.} 

Plugging in $x = x^*$ in WGC~\eqref{eq:WGC} gives $g(x^*) = 0$ almost surely. Since  $g(x^*) = 0$
we have that ~\eqref{eq:WGC} gives~\eqref{eq:ES}.  

\noindent {\bf 3.}{$ES \implies ER$.} 
Expanding the squares of the left hand side of~\eqref{eq:ER} gives
\begin{align*}
\norm{g(x) -g(x^*) - ( \nabla f(x)-\nabla f(x^*)) }^2 & =
\norm{g(x) -g(x^*)}^2 + \norm{\nabla f(x)-\nabla f(x^*)}^2\\
& \quad  -2 \dotprod{g(x) -g(x^*),\nabla f(x)-\nabla f(x^*)} .
\end{align*}
Now assuming that $ES$~\eqref{eq:ES} holds, taking expectation and using that $\E{g(x) } = \nabla f(x)$ we have that
\begin{align*}
\E{\norm{g(x) -g(x^*) - ( \nabla f(x)-\nabla f(x^*)) }^2} & =
\E{\norm{g(x) -g(x^*)}^2} - \norm{\nabla f(x)-\nabla f(x^*)}^2\\
& \leq \E{\norm{g(x) -g(x^*)}^2} \\
& \leq 2\cL (f(x) - f^*). 
\end{align*}
In addition, if the PL condition holds, then we can upper bound $- \norm{\nabla f(x)-\nabla f(x^*)}^2 \leq -2\mu (f(x) - f^*)$ which combined with the above gives
\[ \E{\norm{g(x) -g(x^*) - ( \nabla f(x)-\nabla f(x^*)) }^2} \leq 2(\cL-\mu) (f(x) - f^*).\]
Thus $ER$ holds with $\rho = \cL -\mu.$

Now we prove the remaining implications.\\\

\noindent {\bf 4.} {$L_i+$Interpolated  $\implies L_i+x^*\mbox{--convex}$.} A direct consequence of the interpolation assumption~\eqref{ass:over} is that $\nabla f_i(x^*) =0$ and $f_i(x^*) \leq f_i(x).$ Consequently $f_i(x^*) \leq f_i(x) + \dotprod{\nabla f_i(x^*),x-x^*}$.

\noindent {\bf 5.} { $L_i+x^*\mbox{--convex} \implies ES$.} Follows from Proposition~\ref{prop:master_lemma}.

\noindent {\bf 6.} { $ER \nRightarrow ES$.}
Since when $v$ encodes the full batch sampling where $g (x) = \nabla f(x)$, the expected residual condition always holds for any $\rho >0$ since the left hand side of~\eqref{eq:expresidual} is zero and $0 \leq \rho (f(x)-f^*).$ On the other hand, in the full batch case the expected smoothness assumption is equivalent to claiming that $f$ is $L$--smooth, and clearly there exist differentiable functions that have gradients that are not Lipschitz. For instance $f(x) = x^4.$

Finally\\
\noindent {\bf 7.} { $ER \Rightarrow ABC$.} If the ER condition holds, by Lemma~\ref{lem:varbndrho}  we have that~\eqref{eq:varbndrho2} holds, which fits the format of the ABC assumption~\eqref{eq:ABC} where $A = 2 \rho$, $B = 1$ and $C = 2\sigma^2.$
\end{proof}

\section{Proofs of Main Convergence Analysis Results}
\label{AppendixProofs}

\subsection{Proof of Theorem~\ref{theo:master-quasar-convex-res} }

First we need the following lemma.
\begin{lemma}
Assume $g \in ER(\rho)$. Then for all $x \in \R^d$,
\begin{eqnarray}\label{eq:varbndrho3}
\EE{\cD}{\norm{g(x)}^2} \leq 2(2\rho + L)(f(x) - f(x^*) +2 \sigma^2.
\end{eqnarray}
\end{lemma}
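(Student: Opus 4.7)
The bound to prove is a direct strengthening of Lemma~\ref{lem:varbndrho}: the existing bound has a loose $\|\nabla f(x)\|^2$ term that we want to absorb into a term proportional to $f(x) - f(x^*)$. Since we are working in the setting where $f$ is $L$--smooth (as assumed in Theorem~\ref{theo:master-quasar-convex-res}), Lemma~\ref{lem:smoothsubopt} gives us exactly the tool we need to convert the squared gradient norm into a functional suboptimality.

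The plan is to invoke Lemma~\ref{lem:varbndrho} directly, which tells us
\begin{equation*}
\E{\|g(x)\|^2} \;\leq\; 4\rho\bigl(f(x) - f(x^*)\bigr) + \|\nabla f(x)\|^2 + 2\sigma^2,
\end{equation*}
and then use $L$--smoothness of $f$ together with the fact that $x^* \in \cX^*$ to apply Lemma~\ref{lem:smoothsubopt}, giving $\|\nabla f(x)\|^2 \leq 2L\bigl(f(x) - f(x^*)\bigr)$. Substituting this into the previous inequality yields
\begin{equation*}
\E{\|g(x)\|^2} \;\leq\; 4\rho\bigl(f(x)-f(x^*)\bigr) + 2L\bigl(f(x)-f(x^*)\bigr) + 2\sigma^2 \;=\; 2(2\rho+L)\bigl(f(x)-f(x^*)\bigr) + 2\sigma^2,
\end{equation*}
which is the claimed bound~\eqref{eq:varbndrho3}.

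There is no real obstacle here: the lemma is a two-line consequence of combining a consequence of \ref{eq:expresidual} (Lemma~\ref{lem:varbndrho}) with a consequence of smoothness (Lemma~\ref{lem:smoothsubopt}). The only thing worth checking is that Lemma~\ref{lem:smoothsubopt} applies, which it does because $f$ being $L$--smooth in the usual sense implies it is $L$--smooth along the $x^*$--direction in the sense of Definition~\ref{def:fisgdstarsmooth}, and $x^*$ is assumed to be a minimizer of $f$ so $\nabla f(x^*) = 0$.
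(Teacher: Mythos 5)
Your proof is correct and is essentially the paper's own argument: the paper also combines the variance bound of Lemma~\ref{lem:varbndrho} with the smoothness consequence $\norm{\nabla f(x)}^2 \leq 2L(f(x)-f(x^*))$ to obtain~\eqref{eq:varbndrho3}. Your extra remark that $L$--smoothness (with $\nabla f(x^*)=0$) justifies invoking Lemma~\ref{lem:smoothsubopt} is a fine clarification of a step the paper leaves implicit.
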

\begin{proof}
Since $f$ is $L-$smooth, we have $\norm{\nabla f(x)}^2 \leq 2L(f(x) - f(x^*)).$ Using this inequality together with \eqref{eq:varbndrho2} gives \eqref{eq:varbndrho3}.
\end{proof}

\begin{proof}
We have:
\begin{align*}
\norm{x^{k+1} - x^*}^2 &= \norm{x^k - x^*}^2 - 2\gamma_k\langle g(x^k), x^k - x^*\rangle + \gamma_k^2\norm{g(x^k)}^2
\end{align*}
Hence, taking expectation conditioned on $x_k$, we have:
\begin{eqnarray*}
\EE{\cD}{\norm{x^{k+1} - x^*}^2} &=& \norm{x^k - x^*}^2 - 2\gamma_k\langle \nabla f(x^k), x^k - x^*\rangle + \gamma_k^2\EE{\cD}{\norm{\nabla f_{v_k}(x_k)}^2}\\
&\overset{\eqref{eq:quasar-convex}+\eqref{eq:varbndrho3}}{\leq}& \norm{x^k - x^*} - 2\gamma_k(\zeta - \gamma_k(2\rho + L))(f(x^k) - f^*)) + 2\gamma_k^2\sigma^2.
\end{eqnarray*}
Rearranging and taking expectation, we have
\begin{align*}
2\gamma_k(\zeta - \gamma_k(2\rho + L))\E{f(x^k) - f^*} \leq \E{\norm{x^k - x^*}^2} - \E{\norm{x^{k+1} - x^*}^2} + 2\gamma_k^2\sigma^2.
\end{align*}
Summing over $k =0,\ldots, t-1$ and using telescopic cancellation gives
\begin{align*}
2\sum_{k=0}^{t-1}\gamma_k(\zeta - \gamma_k(2\rho + L))\E{f(x_k) - f^*} \leq \norm{x^0 - x^*}^2 - \E{\norm{x^{k} - x^*}^2} + 2\sigma^2\sum_{k=0}^{t-1}\gamma_k^2.
\end{align*}
Since $\E{\norm{x^{k} - x^*}^2} \geq 0$ and $(\zeta - \gamma_k(2\rho + L))\geq 0$, dividing both sides by $2\sum_{i=0}^{t-1}\gamma_i(\zeta - \gamma_i(2\rho + L))$ gives:
\begin{align*}
\sum_{k=0}^{t-1}\E{\frac{\gamma_k(\zeta - \gamma_k(2\rho + L)}{\sum_{i=0}^{t-1}\gamma_i(\zeta - \gamma_i(2\rho + L))}(f(x^k) - f^*)} \leq \frac{\norm{x^0 - x^*}^2}{2\sum_{i=0}^{t-1}\gamma_i(\zeta - \gamma_i(2\rho + L))} + \frac{\sigma^2\sum_{k=0}^{t-1}\gamma_k^2}{\sum_{i=0}^{t-1}\gamma_i(\zeta - \gamma_i(2\rho + L))}.
\end{align*}
Thus,
\begin{align*}
\min_{k=0,\dots,t-1}\E{f(x^k) - f(x^*)} \leq \frac{\norm{x^0 - x^*}^2}{2\sum_{i=0}^{t-1}\gamma_i(\zeta - \gamma_i(2\rho + L))} + \frac{\sigma^2\sum_{k=0}^{t-1}\gamma_k^2}{\sum_{i=0}^{t-1}\gamma_i(\zeta - \gamma_i(2\rho + L))}.
\end{align*}
For the different choices of step sizes:
\begin{enumerate}
\item If $\forall k \in \N, \; \gamma_k = \frac{1}{2} \frac{\zeta}{(2\rho + L)}$, then it suffices to replace $\gamma_k = \gamma$ in \eqref{eq:master-quasar-convex-res}.
\item Suppose algorithm \eqref{eq:sgdstep} is run for $T$ iterations. Let $\forall k=0,\dots,T-1, \; \gamma_k = \frac{\gamma}{\sqrt{T}}$ with $\gamma \leq \frac{\zeta}{2(2\rho + L)}$. Notice that since $\gamma \leq \frac{\zeta}{2(2\rho + L)}$, we have $\zeta - \gamma(2\rho + L) \leq \frac{1}{2}$. Then it suffices to replace $\gamma_k = \frac{\gamma}{\sqrt{T}}$ in \eqref{eq:master-quasar-convex-res}.

\item Let $\forall k \in \N, \; \gamma_k = \frac{\gamma}{\sqrt{k+1}}$ with $\gamma \leq \frac{\zeta}{2\rho + L}$. Note that that since $\gamma_t = \frac{\gamma}{\sqrt{t+1}}$ and using the integral bound, we have that
\begin{equation}
\sum_{t=0}^{k-1}\gamma_t^2  = \gamma^2 \sum_{t=0}^{k-1} \frac{1}{t+1} \; \leq \; \gamma^2\left(\log(k) + 1 \right). \label{eq:ks94oo8s84-res}
\end{equation}
Furthermore using the integral bound again we have that
\begin{eqnarray}
\sum_{t=0}^{k-1}\gamma_t &\geq& 2\gamma\left(\sqrt{k} -1\right)
\label{eq:ia37ha37ha3-res}.
\end{eqnarray}
Now using~\eqref{eq:ks94oo8s84-res} and~\eqref{eq:ia37ha37ha3-res} we have that
\begin{eqnarray*}
\sum_{i=0}^{k-1}\gamma_i(\zeta-\gamma_i(2\rho + L)) &= &
\zeta\sum_{i=0}^{k-1}\gamma_i -(2\rho + L) \sum_{i=0}^{k-1}\gamma_i^2
\\
&\geq & 2\gamma\left( \zeta(\sqrt{k} -1) - \gamma (\rho + \frac{L}{2})\left(\log(k) + 1 \right) \right).
\end{eqnarray*}
It remains to replace bound the sums in \eqref{eq:master-quasar-convex-res} by the values we have computed.
\end{enumerate}
\end{proof}

\subsection{Proof of Corollary~\ref{cor:quasar-minib} }
\begin{proof}
The interpolated assumption~\ref{ass:over} implies that $\nabla f_i(x^*) = g(x^*) = 0$ and thus $\sigma =0.$ Furthermore from~\eqref{eq:bniceconst} we have that the \ref{eq:expresidual} condition holds with $\rho = L_{\max} \frac{n-b}{(n-1)b}$. Combining these two observations with \eqref{eq:master-quasar-const} gives~\eqref{eq:SGDforStarOVerER}. The total complexity~\eqref{eq:totalcomplexStaroverER} follows from computing the iteration complexity via~\eqref{eq:SGDforStarOVerER}  and multiplying it by $b$.

 Finally for the optimal minibatch size,  since~\eqref{eq:totalcomplexStaroverER} is a linear function in  $b$, the minimum depends on  the sign of its slope. Taking the derivative in $b$ we have the slope is given by $2\frac{L- 2L_{\max}}{n-1} $. If the slope is negative, we want $b$ to be a large as possible, that is $b=n$. Otherwise if the slope is positive $b=1$  is optimal.
\end{proof}
\subsection{Proof of Lemma~\ref{lem:cLmax}}

Before presenting our proof for Lemma~\ref{lem:cLmaxapp}, we need to present a large family of sampling vectors called the \emph{arbitrary samplings}.
\begin{lemma}[Lemma 3.3 ~\cite{gower2019sgd}]\label{lem:vpisample}
Let $S \subset \{1,\ldots, n\}$ be a random set.
Let $\Prob{i \in S} = p_i$. It follows that $v = \sum_{i\in S} \frac{1}{p_i} e_i$ is a sampling vector. We call $v$ the \emph{arbitrary sampling} vector.
\end{lemma}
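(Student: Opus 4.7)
The plan is to unpack the definition of $v$ coordinate-wise and compute $\E{v_i}$ directly using the indicator representation. Recall that a sampling vector (as used earlier in the paper) is a random vector $v \in \R^n_+$ satisfying $\E{v_i} = 1$ for every $i \in \{1,\ldots,n\}$, and this is the only property we need to verify, since $v_i = \frac{1}{p_i} \mathbf{1}_{i \in S} \ge 0$ whenever $p_i > 0$ (which is implicit in the hypothesis $\Prob{i\in S} = p_i$, since otherwise $v_i = 1/p_i$ is undefined for $i \in S$).

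First I would rewrite the definition $v = \sum_{i \in S} \frac{1}{p_i} e_i$ coordinate-wise. Since $\{e_1,\ldots,e_n\}$ is the canonical basis, the $i$-th coordinate of $v$ is $v_i = \frac{1}{p_i}$ when $i \in S$ and $v_i = 0$ otherwise. Equivalently,
\begin{equation*}
v_i \;=\; \frac{1}{p_i}\, \mathbf{1}_{\{i \in S\}}.
\end{equation*}

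Next, I would take expectation of both sides and apply linearity together with $\E{\mathbf{1}_{\{i \in S\}}} = \Prob{i \in S} = p_i$ to conclude
\begin{equation*}
\E{v_i} \;=\; \frac{1}{p_i}\, \E{\mathbf{1}_{\{i \in S\}}} \;=\; \frac{1}{p_i} \cdot p_i \;=\; 1.
\end{equation*}
Since this holds for every $i \in \{1,\ldots,n\}$, and $v_i \ge 0$ almost surely, $v$ satisfies the definition of a sampling vector.

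There is no real obstacle here: the statement is essentially a one-line computation once one writes out the coordinates of $v$ via the indicator function. The only subtlety worth flagging is the implicit requirement $p_i > 0$ for all $i$ so that $1/p_i$ is well-defined; if some $p_i = 0$ then coordinate $i$ is never sampled and the convention $v_i = 0$ gives $\E{v_i} = 0 \ne 1$, so the statement implicitly assumes every index has positive inclusion probability (a natural nondegeneracy condition for a sampling vector).
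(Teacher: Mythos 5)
Your proof is correct and is exactly the standard argument: writing $v_i = \tfrac{1}{p_i}\mathbf{1}_{\{i\in S\}}$ and using $\E{\mathbf{1}_{\{i\in S\}}} = p_i$ gives $\E{v_i}=1$, which is all the definition of a sampling vector requires; this matches the proof the paper defers to (Lemma 3.3 of \cite{gower2019sgd}). Your remark about the implicit nondegeneracy condition $p_i>0$ is also a fair and correct observation.
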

An arbitrary sampling is sufficiently flexible as to model almost all samplings and minibatching schemes of interest, see Section 3.2 i~n\cite{gower2019sgd}. For example the $b$--minibatch sampling is a special case where  $p_i = \frac{b}{n}$ and $\Prob{S = B} = \left. 1 \right/ \binom{n}{b}$ for every $B \in \{1,\ldots, n\}$ that has $b$ elements.

Now we prove  Lemma~\ref{lem:cLmax} and some additional results. 
\begin{lemma}\label{lem:cLmaxapp}
Assume interpolation~\ref{ass:over} holds. Let $f_i$ be $L_i$--smooth and let $v$ be a sampling vector as defined in Lemma~\ref{lem:vpisample}. It follows that there exists $\cL_{\max} > 0 $ such that
\begin{equation} \label{eq:cLmaxbndapp}
\frac{1}{2  \cL_{\max}} ( f(x) -f^* ) \leq \E{\frac{(f_v(x)-f_v^*)^2}{ \|\nabla f_v(x)\|^2}}.
\end{equation}
For $B \subset\{1,\ldots, n\}$ 
let $L_B$ be the smoothness constant of $f_B \eqdef\frac{1}{n} \sum_{i\in B} p_i f_i$. It follows that\\
{\bf 1.} If $v$ is an arbitrary sampling vector (Lemma~\ref{lem:vpisample}) then 
$\cL_{\max} = \underset{i=1,\ldots, n}{ \max} \dfrac{p_i }{ \sum_{B: i \in B}\frac{p_B}{ L_B} }.$\\
{\bf 2.} If $v$ is the $b$--minibatch sampling then 
$\cL_{\max} \; =\; \cL_{\max}(b) \;=\; \underset{i=1,\ldots, n}{ \max} \dfrac{  \binom{n-1}{b-1}}{ \sum_{B: i \in B} L_B^{-1}}.$\\
   \end{lemma}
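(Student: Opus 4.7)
The plan is to prove the bound by combining smoothness of the realized function $f_v$ with the interpolation property, and then to carry out a double-counting argument over the random set $S$ to obtain the explicit constant $\cL_{\max}$.

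First I would use the interpolation Assumption~\ref{ass:over}: since each $f_i$ attains its minimum at $x^*$, we have $\nabla f_i(x^*) = 0$, so for every realization of the sampling vector $v = \sum_{i \in S} \frac{1}{p_i}e_i$ we obtain $\nabla f_v(x^*) = 0$, and therefore $x^*$ is a minimizer of $f_v$ and $f_v^* = f_v(x^*)$. Because each $f_i$ is $L_i$-smooth, $f_v$ itself is smooth with constant $L_v \eqdef L_S$, the smoothness constant of the sum indexed by $S$. Applying Lemma~\ref{lem:smoothsubopt} to $f_v$ yields $\|\nabla f_v(x)\|^2 \leq 2 L_v \bigl(f_v(x) - f_v^*\bigr)$, which rearranges to
\begin{equation*}
\frac{(f_v(x) - f_v^*)^2}{\|\nabla f_v(x)\|^2} \;\geq\; \frac{f_v(x) - f_v^*}{2 L_v}.
\end{equation*}

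Next I would take expectations over $S$ and unfold the right-hand side. Writing $p_B \eqdef \Prob{S = B}$ and using $f_v(x) - f_v^* = \frac{1}{n}\sum_{i \in S}\frac{f_i(x) - f_i(x^*)}{p_i}$ (valid because $\nabla f_i(x^*) = 0$ and interpolation gives $f_i(x^*) = f_i^*$), I obtain
\begin{equation*}
\E{\frac{f_v(x) - f_v^*}{2 L_v}} \;=\; \sum_B \frac{p_B}{2 L_B}\cdot\frac{1}{n}\sum_{i \in B}\frac{f_i(x) - f_i(x^*)}{p_i}.
\end{equation*}
Swapping the order of summation via a double-counting argument gives
\begin{equation*}
\E{\frac{f_v(x) - f_v^*}{2 L_v}} \;=\; \frac{1}{n}\sum_{i=1}^n \bigl(f_i(x) - f_i(x^*)\bigr)\cdot\frac{1}{2p_i}\sum_{B : i \in B}\frac{p_B}{L_B}.
\end{equation*}

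Finally, I would use that interpolation forces each term $f_i(x) - f_i(x^*) \geq 0$, so the whole sum is lower-bounded by replacing the per-index coefficient by its minimum. This produces
\begin{equation*}
\E{\frac{(f_v(x)-f_v^*)^2}{\|\nabla f_v(x)\|^2}} \;\geq\; \frac{1}{2\cL_{\max}}\bigl(f(x)-f^*\bigr), \quad \cL_{\max} \;=\; \max_{i=1,\dots,n}\frac{p_i}{\sum_{B : i \in B} p_B / L_B},
\end{equation*}
which is the arbitrary-sampling formula. For the $b$-minibatch specialization I would plug in $p_i = b/n$ and $p_B = 1/\binom{n}{b}$ for $|B| = b$, and simplify $\frac{b}{n}\binom{n}{b} = \binom{n-1}{b-1}$ to recover the stated expression. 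The only delicate step is the interchange of summations and identifying the worst-case index; everything else is a direct combination of smoothness and interpolation.
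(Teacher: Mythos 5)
Your proposal is correct and follows essentially the same route as the paper's proof: bound $\|\nabla f_v(x)\|^2 \leq 2L_v(f_v(x)-f_v^*)$ via smoothness, use interpolation to write $f_v(x)-f_v^*$ as a nonnegative per-component sum, swap the order of summation by double counting, bound below by the worst-case index to get $\cL_{\max}=\max_i p_i/\sum_{B:i\in B}p_B/L_B$, and specialize with $\tfrac{b}{n}\binom{n}{b}=\binom{n-1}{b-1}$. No substantive differences from the paper's argument.
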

   
\begin{proof}
Since $f_i$ is $L_i$--smooth, we have that $f_v$ is $L_v$--smooth with $L_v \eqdef \frac{1}{n} \sum_{i=1}^n v_i L_i.$  Thus according to Lemma~\ref{lem:smoothconvexaroundxst} we have that
\begin{equation*}
 \| \nabla f_v(x)\|^2 \leq 2 L_v (f_v(x)-f_v^*).
\end{equation*}
Consequently we have that
\begin{equation}\label{eq:tempgradvbnd}
\frac{1}{\| \nabla f_v(x)\|^2} \geq \frac{1}{2L_v (f_v(x)-f_v^*)}.
\end{equation}
Using this we have the following bound
\begin{align}\label{eq:tempasdasda}
\E{\frac{ (f_v(x)-f_v^*)^2}{ \| \nabla f_v(x)\|^2}}  
 \overset{\eqref{eq:tempgradvbnd}}{\geq}\E{\frac{ f_v(x)-f_v^*}{ 2 L_v}}.
\end{align}
Let $S$ be the random set associated to the arbitrary sampling vector $v$. We use $B\subset \{1,\ldots, n\}$ to denote a realization of $S$ and $p_B \eqdef \Prob{B=S}.$ 
  Thus with this notation we have that
\begin{eqnarray}
\E{\frac{ (f_v(x)-f_v^*)^2}{ \| \nabla f_v(x)\|^2}}   & \overset{\eqref{eq:tempasdasda}}{\geq} &
 \sum_{B \subset \{1,\ldots ,n\} } p_B\frac{ f_B(x)-f_B^*}{2L_B}. \label{eq:tempsinejoise}
 \end{eqnarray}
 
  Now let   $p_i \eqdef \Prob{i \in S}.$ Due to the interpolation condition we have that and the definition of $f_B$ we have that
  \[f_B^* = f_B(x^*) = \frac{1}{n} \sum_{i\in B} p_i f_i(x^*) = \frac{1}{n} \sum_{i\in B} p_i f_i^*.  \]
Consequently
 \begin{eqnarray}
\E{\frac{ (f_v(x)-f_v^*)^2}{ \| \nabla f_v(x)\|^2}}   &\overset{\eqref{eq:tempsinejoise}}{=}&\sum_{B \subset \{1,\ldots ,n\} } p_B \sum_{i \in B} \frac{ f_i(x)-f_i^*}{2n L_B p_i} \nonumber \\
&= &\frac{1}{2n}\sum_{i=1,\ldots, n  }  \sum_{B: i \in B}\frac{p_B}{ p_i L_B} (f_i(x)-f_i^*) \nonumber \\
& \geq & \min_{i=1,\ldots, n} \left\{ \sum_{B: i \in B}\frac{p_B}{ p_i L_B} \right\}  \frac{1}{2cn}\sum_{i=1,\ldots, n  }(f_i(x)-f_i^*)\nonumber \\
& = & \frac{1}{2}\min_{i=1,\ldots, n} \left\{ \sum_{B: i \in B}\frac{p_B}{ p_i L_B} \right\}  (f(x) -f^*),
\end{eqnarray}
where in the first equality we used a double counting argument to switch the order of the sum over subsets $B$ and elements $i\in B$.
The main result~\eqref{eq:cLmaxbndapp} now follows by observing that

\[\frac{1}{\min_{i=1,\ldots, n} \left\{ \sum_{B: i \in B}\frac{p_B}{ p_i L_B} \right\} } = \max_{i=1,\ldots, n}\left\{ \frac{p_i }{ \sum_{B: i \in B}\frac{p_B}{ L_B}  } \right\}
 =\cL_{\max}.\]

Finally, for a $b$--minibatch sampling we have that
\[p_i = \frac{b}{n}, \quad p_B = \left. 1\right/ \binom{n}{b} \quad \mbox{and}\quad L_B  \leq  \frac{1}{b}\sum_{j\in B} L_j,\]
which in turn gives
\[\frac{1}{\cL_{\max}} =\min_{i=1,\ldots, n}\sum_{B: i \in B}\frac{n}{ b} \frac{1}{\binom{n}{b}} \frac{1}{L_B}   = \min_{i=1,\ldots, n}\sum_{B: i \in B}\frac{1}{\binom{n-1}{b-1}} \frac{1}{L_B}  .\]
\end{proof}

\subsection{Proof of Theorem~\ref{theo:SPLquasarconvex}}
\begin{proof}
\begin{eqnarray}
\label{noiaks}
\|x^{k+1}-x^*\|^2&=&\|x^k-\gamma_k \nabla f_v(x^k)-x^*\|^2\notag\\
&=&\|x^k-x^*\|^2-2 \gamma_k \langle x^k-x^*, \nabla f_v(x^k) \rangle + \gamma_k^2 \| \nabla f_v(x^k)\|^2\notag\\
&\overset{\eqref{eq:quasar-convex} }{\leq}&\|x^k-x^*\|^2-2 \zeta \gamma_k \left[f_v(x^k)-f_v(x^*)\right] + \gamma_k^2 \| \nabla f_v(x^k)\|^2\notag\\
&\overset{\eqref{SPLRv}}{=}&\|x^k-x^*\|^2-2 \zeta \gamma_k \left[f_v(x^k)-f_v^*\right] + \frac{\gamma_k }{c}\left[f_v(x^k)-f_v^*\right]\notag\\
&=&\|x^k-x^*\|^2-\gamma_k\left(2 \zeta-\frac{1}{c}\right) \left[f_v(x^k)-f_v(x^*)\right].
\end{eqnarray}

By rearranging we have that
\begin{eqnarray}
\label{noiaks2}
\gamma_k\left(2 \zeta-\frac{1}{c}\right) \left[f_v(x^k)-f_v(x^*)\right] &\leq& \|x^k-x^*\|^2 - \|x^{k+1}-x^*\|^2.
\end{eqnarray}
Taking expectation, and 
since $2 \zeta-\frac{1}{c}>0$ we have by Lemma~\ref{lem:cLmax} we have that
\begin{eqnarray}
\frac{2c \zeta-1}{2c^2} \frac{1}{\cL_{\max}} \E{ f(x^k) -f(x^*)}  &\leq & \left(2 \zeta-\frac{1}{c}\right)\E{\frac{ (f_v(x)-f_v^*)^2}{c \| \nabla f_v(x)\|^2}}   \nonumber \\
& \overset{\eqref{SPLRv}}{=}&  \left(2 \zeta-\frac{1}{c}\right)\E{\gamma_k (f_v(x)-f_v^*)} \nonumber \\
 &\overset{\eqref{noiaks2}}{ \leq} & \E{\|x^k-x^*\|^2 }- \E{\|x^{k+1}-x^*\|^2}.\label{noiaks3}
\end{eqnarray}
Summing from $k= 0, \ldots, K-1$ and using telescopic cancellation gives
\begin{eqnarray}
\label{noiaks4}
\frac{2c \zeta-1}{2c^2} \frac{1}{\cL_{\max}}\sum_{k=0}^{K-1}\E{ f(x^k) -f(x^*)}   &\leq& \|x^{0}-x^*\|^2 - \E{\|x^{K}-x^*\|^2}.\notag
\end{eqnarray}
Multiplying through by  $\cL_{\max}\frac{2c^2} {2c \zeta-1}\frac{1}{K} $ gives
\begin{align*}
\min_{i=0,\ldots, K-1} \E{f(x^k) -f(x^*)}  \leq  \frac{1}{K}\sum_{k=0}^{K-1} \E{ f(x^K) -f(x^*)} & \leq \frac{2c^2} {2c \zeta-1}\frac{ \cL_{\max}}{K} \|x^{0}-x^*\|^2 .
\end{align*}
\end{proof}

\subsection{Proof of Theorem~\ref{theo:PLConstant}}

In the following proof, for ease of reference, we repeat the step-size choice here:
\begin{equation}\label{ncaoikaosd}
\gamma\leq \frac{1}{1 +2\rho/\mu} \frac{1}{L}.
\end{equation}
\begin{proof}
By combining the smoothness of function $f$ with the update rule of SGD we obtain:
\begin{eqnarray}
f(x^{k+1})& \leq &  f(x^{k})+ \langle \nabla f(x^k), x^{k+1}-x^k \rangle +\frac{L}{2} \| x^{k+1}-x^k\|^2 \notag\\
&=& f(x^{k})-\gamma\langle \nabla f(x^k), \nabla f_{v^k}(x^k) \rangle +\frac{L \gamma^2}{2} \| \nabla f_{v^k}(x^k)\|^2.
\end{eqnarray}

By taking expectation conditioned on $x^k$ we obtain:
\begin{eqnarray}
\label{eq:sgdsbuopt1sttstep}
\E{f(x^{k+1})\; | \; x^k} & \leq & f(x^{k}) - \gamma \norm{\nabla f(x^k)}^2 + \frac{L\gamma^2}{2} \Exp_\cD{\norm{\nabla f_{v^k}(x^k)}^2}  \notag\\
& \overset{\eqref{eq:varbndrho2}}{\leq} &  f(x^{k}) - \gamma \norm{\nabla f(x^k)}^2 + 2L\gamma^2 \rho ( f(x^k)-f^* ) \notag\\&& +  \frac{L\gamma^2}{2}\|\nabla f (x^k)\|^2   + L\gamma^2 \sigma^2 \nonumber\\
& =& f(x^{k})-\gamma(1 -\frac{L\gamma}{2})\norm{\nabla f(x^k)}^2 \notag\\&& +2L\gamma^2\rho(f(x^k) -f^*)+L\gamma^2\sigma^2  \notag\\ 
&\overset{\eqref{eq:PL}}{ \leq }& f(x^{k})-2\mu\gamma(1 -\frac{L\gamma}{2})[(f(x^k) -f^*)] \notag\\&& +2L\gamma^2\rho(f(x^k) -f^*)+L\gamma^2\sigma^2,
\end{eqnarray} 
where the last inequality holds because $1 -\frac{L\gamma}{2}>0$ since $\gamma\leq \frac{1}{1 +2\rho/\mu} \frac{1}{L} <\frac{1}{L}.$

Taking expectations again and subtracting $f^*$ from both sides yields:
\begin{eqnarray}
\Exp[f(x^{k+1})-f^*] & \leq & \bigg( 1-2\gamma\left(\mu(1 -\frac{L\gamma}{2}) -L\gamma\rho\right) \bigg)\Exp[f(x^k) -f^*] +L\gamma^2\sigma^2. \notag\\ 
& \overset{\eqref{ncaoikaosd}}{\leq} & (1-\gamma \mu) \Exp[f(x^k) -f^*] +L\gamma^2\sigma^2.  \label{eq:subPLconvexstep}
\end{eqnarray}

Recursively applying the above and summing up the resulting geometric series gives:
\begin{eqnarray}
\E{f(x^{k}) -f^*} & \leq & (1-\mu \gamma)^{k}[f(x^0) -f^*]
+ L\gamma^2\sigma^2 \sum_{j=0}^{k-1} (1- \gamma \mu )^j.
\end{eqnarray}
Using $\sum_{i=0}^{k-1} (1-\mu \gamma)^i = \frac{1-(1-\mu \gamma)^{k}}{1-1+\mu \gamma} \leq \frac{1}{\mu \gamma},$
in the above gives~\eqref{eq:functionTheoremExpResidual}.

\textbf{On Iteration Complexity:} 
For ease of reference, we
we repeat the step-size choice for the iteration complexity result 
\begin{equation}\label{eq:gammaSGDPLcompl}
\gamma =\frac{1}{L}\min \left\{ \frac{\mu \epsilon}{2 \sigma^2}, \, \frac{1}{1 +2\rho/\mu}\right\}
\end{equation}

To analyze the iteration complexity, let $\epsilon>0$ and let us divide the right hand side of~\eqref{eq:functionTheoremExpResidual} into two parts and bound each of them separately by $\frac{\epsilon}{2}$. For the right most part we have that
\begin{eqnarray}
\label{naoisdaldka}
\frac{L\gamma\sigma^2}{\mu} & \leq  & 
 \frac{\epsilon}{2} \quad \Rightarrow \quad 
 \gamma \; \leq \; \frac{1}{L} \frac{\mu \epsilon}{2 \sigma^2}.
\end{eqnarray}
The derivation in \eqref{naoisdaldka} gives us the restriction~\eqref{eq:gammaSGDPLcompl} on the step size.

For the other remaining part we have that
 \[(1-\mu \gamma )^k(f(x^0)-f^*) < \frac{\epsilon}{2}.\]
 Taking logarithms and re-arranging the above gives
\begin{equation}
  \log\left(\frac{2 (f(x^0)-f^*)}{  \epsilon }\right) \leq k\log\left(\frac{1}{1 - \gamma \mu }\right).\label{eq:acb378babuhi3}
\end{equation}
Now using that $\log\left(\frac{1}{\rho}\right) \geq 1-\rho,$ for $0<\rho \leq 1$ gives
\[k\geq \frac{1}{\mu \gamma }\log\left(\frac{\epsilon}{2(f(x^0)-f^*)}\right).\]
Thus restricting the step size according to~\eqref{eq:gammaSGDPLcompl} and inserting $\gamma$ into the above gives the result~\eqref{eq:itercomplexPL}.
\end{proof}

\subsubsection{Comparison of Theorem~\ref{theo:PLConstant} to the PL Convergence Results by~\cite{Khaled-nonconvex-2020}}
\label{sec:comparekhaled}
Recently, \cite{Khaled-nonconvex-2020} present a comprehensive theory for the convergence of SGD in the nonconvex setting. They do this by relying on ABC assumption~\eqref{eq:ABCpaper}. \cite{Khaled-nonconvex-2020}  consider the general smooth non-convex setting, with no additional assumption besides the ABC assumption, where they establish new state-of-the-art results. They also consider some additional assumptions such as in Theorem 3 in~\citep{Khaled-nonconvex-2020} where they assume that the PL condition~\eqref{eq:PL} holds. Since the ABC condition includes \eqref{eq:varbndrho2} (consequence of our expected smoothness~\eqref{eq:ER} used in our proofs) as a special case, this implies that the assumptions in our Theorem~\ref{theo:PLConstant} are implicitly a special case of the assumptions in Theorem 3 in~\citep{Khaled-nonconvex-2020}. As such, here we would like to draw out the similarities and differences of the results in these two theorems.

\paragraph{Our Theory is less general.}   Theorem 3 in~\citep{Khaled-nonconvex-2020} is \emph{more general} as compared to our Theorem~\ref{theo:PLConstant} since the ABC assumption~\eqref{eq:ABCpaper} includes \eqref{eq:varbndrho2} as a special case. Since our Theorem~\ref{theo:PLConstant} is less general, it is in some sense stronger, as we explain next. 

\paragraph{Anytime result.} 
Theorem 3 in~\cite{Khaled-nonconvex-2020} is not an \emph{anytime} result. One needs to fix the total number of iterations $T$ for which  SGD will run before setting the step-size. This is in contrast to our  Theorem~\ref{theo:PLConstant}, which does not depend on the final number of steps taken, and thus allows one to simply \emph{monitor} the progress of SGD and halt when a desired tolerance is reached.

\paragraph{Simple step-size.} 
This dependency on the total number of iterations $T$ appears in the proposed step-size of Theorem 3 in~\citep{Khaled-nonconvex-2020} which is also iteration dependent (our step-size is constant). As we explain below for the full batch case (deterministic gradient descent) the step-size of ~\citep{Khaled-nonconvex-2020} still depends on the PL parameter $\mu$ while our proposed step-size does not. 
 
 \paragraph{Mini-batch analysis.} 
Because of our constant step-size in Theorem~\ref{theo:PLConstant}, we are able to provide a clear mini-batch analysis and an optimal mini-batch size in  Corollary~\ref{theo:SGDforPolyakOVer}.
 
To give a simple example contrasting the two theorems, consider the full batch case ($\tau =n$) and the notation in~\cite{Khaled-nonconvex-2020}.

When $\tau =n$  the three parameters of the ABC condition~\eqref{eq:ABCpaper} in~\cite{Khaled-nonconvex-2020} are given by $A =0,$ $B=1$ and $C =0$. In this setting, the step-size  $\gamma_k$ in Theorem 3 in~\cite{Khaled-nonconvex-2020} is given by
\begin{equation*}
\gamma_k \; = \; 
\begin{cases}
\frac{1}{2L}& \quad    \mbox{if } K \leq \frac{2L}{\mu} \mbox{ or } t \leq  \left\lceil \frac{K}{2}\right\rceil \\
\frac{2}{\mu \left( 4L/\mu +k -K/2\right)}  &\quad  \mbox{if } K \geq \frac{2L}{\mu} \mbox{ and } k \geq  \left\lceil \frac{K}{2}\right\rceil \
\end{cases}
\end{equation*}
and thus depends on the iteration $k$ counter, the total number of iterations $K$, the PL parameter $\mu$ and smoothness $L$.
 Compare this to our step-size  $\gamma = 1/L$  in Theorem~\ref{theo:PLConstant} and Corollary~\ref{theo:SGDforPolyakOVer}  in the full batch case. Our step-size is thus always larger and matches the standard step-size of gradient descent.

\subsection{Proof of Theorem~\ref{theo:SGDforPolyakOVer} }
\begin{proof}
By Theorem~\ref{theo:hierarchy} we have that the \ref{eq:expresidual} condition holds. Thus Theorem~\ref{theo:PLConstant} holds. Furthermore, since $f$ is interpolated we have that $\sigma =0,$ which when combined with Theorem~\ref{theo:PLConstant} and~\eqref{eq:itercomplexPL} gives~\eqref{eq:itercomplexPLover}. 
 
 The total complexity~\eqref{eq:totalcomplexPLover} follows by using Lemma~\ref{prop:bniceconst} and the expression for $\rho$ in~\eqref{eq:bniceconst} and plugging into~\eqref{eq:itercomplexPLover}. Since~\eqref{eq:totalcomplexPLover} is a linear function in  $b$, the minimum depends on  the sign of its slope. Taking the derivative in $b$ we have the sign slope is given by $\left(1- \frac{2\kappa_{\max}}{n-1} \right)$. If the slope is negative, we want $b$ to be a large as possible, that is $b=n$. Otherwise if the slope is positive $b=1$  is optimal.
\end{proof}

\subsection{Proof of Theorem~\ref{TheoremPLDecreasing}}
\label{sec:theoplswitch}
\begin{proof}
Let  $\gamma_k \eqdef \frac{2k+1}{(k+1)^2 \mu}$ and let $k^*$ be an integer that satisfies
\begin{equation}\label{eq:asdioaisdn}
\gamma_{k^*} \leq \frac{\mu}{L (\mu +2\rho)}.
\end{equation}
Note that $\gamma_k$ is decreasing in $k$ and  consequently $\gamma_k \leq \frac{\mu}{L (\mu +2\rho)}$ for all $k \geq k^*.$ This in turn guarantees that~\eqref{eq:subPLconvexstep}  holds for all $k\geq k^*$ with $\gamma_k$ in place of $\gamma$, that is
\begin{equation}
\Exp[f(x^{k+1})-f^*]  \leq \frac{k^2}{(k+1)^2} \Exp[f(x^k)-f^*] + \frac{L\sigma^2}{\mu^2}\frac{(2k+1)^2}{(k+1)^4 }.
\end{equation}
Multiplying both sides by $(k+1)^2$ we obtain
\begin{eqnarray*}
(k+1)^2 \Exp[f(x^{k+1})-f^*] &\leq & 
k^2  \Exp[f(x^k)-f^*] + \frac{L\sigma^2}{\mu^2} \left(\frac{2k+1}{k+1}\right)^2 \\
 &\leq & k^2 \Exp[f(x^k)-f^*] + 4 \frac{L\sigma^2}{\mu^2},
\end{eqnarray*}
where the second inequality holds because  $\frac{2k+1}{k+1} <2$. Rearranging and summing from $t= k^* \ldots k$ we obtain:
\begin{equation}
\sum_{t=k^*}^{k} \left[ (t+1)^2 \Exp[f(x^{t+1})-f^*] - t^2 \Exp[f(x^t)-f^*] \right] \leq  \sum_{t=k^*}^{k} 4 \frac{L\sigma^2}{\mu^2}. 
\end{equation}
Using telescopic cancellation gives
\[
(k+1)^2 \Exp[f(x^{k+1})-f^*] \leq  (k^*)^2 \Exp[f(x^{k^*})-f^*] + 4 \frac{L\sigma^2}{\mu^2} (k-k^*)
\]
Dividing the above by $(k+1)^2$ gives
\begin{equation}
\label{eq:cndsiu48js2}
\Exp[f(x^{k+1})-f^*] \leq  \frac{(k^*)^2}{(k+1)^2 } \Exp[f(x^{k^*})-f^*]  +\frac{4 L \sigma^2 (k-k^*)}{\mu^2(k+1)^2 }. 
\end{equation}
For $k \leq k^*$ we have that~\eqref{eq:functionTheoremExpResidual} holds, which combined with~\eqref{eq:cndsiu48js2}, gives 
\begin{eqnarray}
 \Exp[f(x^{k+1})-f^*] &\leq &
  \frac{(k^*)^2}{(k+1)^2 } \left( 1 -  \frac{\mu^2}{(\mu+2\rho)L} \right)^{k^*} [f(x^0)-f^*] \nonumber \\ &   
 & \quad  +\frac{L \sigma^2 }{\mu^2 (k+1)^2}\left(4 (k-k^*) +   \frac{(k^*)^2 \mu^2}{ \mu+2\rho}\frac{1}{L} \right),  \label{eq:sdaiuna3}
 \end{eqnarray}
It now remains to choose $k^*$.
Choosing $k^*$ that minimizes the second line of~\eqref{eq:sdaiuna3}  gives $k^* = 2\frac{L}{\mu} \left(1+2\frac{\rho}{\mu}\right)$. With this choice of $k^*$ it is easy to show that~\eqref{eq:asdioaisdn} holds.
 Furthermore,  by using that $\frac{2}{k^*} = \frac{\mu^2}{\mu+2\rho} \frac{1}{L}$ in~\eqref{eq:sdaiuna3} gives
\begin{eqnarray}
 \Exp[f(x^{k+1})-f^*] &\leq &
 \frac{( k^*)^2}{(k+1)^2 } \left( 1 - \frac{2}{k^*}  \right)^{k^*} [f(x^0)-f^*] \nonumber \\ 
 &   & \quad  +\frac{L \sigma^2 }{\mu^2 (k+1)^2}\left(4 (k-k^*) +   2k^* \right) \nonumber \\
  & \leq &  \frac{( k^*)^2}{(k+1)^2 e^2}  [f(x^0)-f^*]  +\frac{2 L \sigma^2 }{\mu^2 (k+1)^2}\left(2 k-k^* \right)  \nonumber \\
  & \leq &\frac{( k^*)^2}{(k+1)^2 e^2}  [f(x^0)-f^*]  +\frac{4 L \sigma^2 }{\mu^2 (k+1)}. 
\end{eqnarray}
where in the second inequality we have used that $\left( 1 -  \frac{1}{x} \right)^{2x} \leq \frac{1}{e^2}$  for all $x \geq 1,$ and in the third inequality we used that
$\frac{2 k-k^*}{k+1} \leq \frac{2 k}{k+1}  \leq 2. $
\end{proof}

\subsection{Proofs of Section~\ref{sec:examples} }

\subsubsection{Proof of Theorem~\ref{thm:BV-WS-ES}}
We repeat the statement of this theorem detailing as well the exact constants for each assumption.

\begin{theorem} \label{thmapp:BV-WS-ES}The following hierarchy holds
\vspace{-0.15cm}\begin{center}
\begin{tikzcd}
\boxed{ \ref{eq:BV}(\sigma^2)+ \ref{eq:WS}(L)} \arrow[r] &
  \boxed{ES(L)} \arrow[r] &
  \boxed{\colorbox{blue!20}{ER}(L)}
\end{tikzcd}
\end{center}
Furthermore, there exist functions for which~\ref{eq:ER} condition holds but~\eqref{eq:BV} does not. 
\end{theorem}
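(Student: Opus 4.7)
The plan is to establish the two forward implications in turn and then exhibit a simple one-dimensional separating example. For the first arrow $\ref{eq:BV}(\sigma^2)+\ref{eq:WS}(L)\Rightarrow ES(L)$, I would start from the decomposition
\begin{align*}
g(x)-g(x^*) = \bigl(\nabla f(x)-\nabla f(x^*)\bigr) + \bigl(g(x)-\nabla f(x)\bigr) - \bigl(g(x^*)-\nabla f(x^*)\bigr),
\end{align*}
expand the squared norm, and take expectations. Because $\E{g(x)-\nabla f(x)}=0$, the cross terms between the deterministic gradient difference and the mean-zero noise residuals vanish. Using $\nabla f(x^*)=0$, the deterministic piece reduces to $\|\nabla f(x)\|^2$, which WS bounds by $2L(f(x)-f^*)$. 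To absorb the remaining noise contributions into this bound with the sharp constant $L$, I would lean on the shared-randomness convention implicit in the paper's sampling-vector notation $g(x)=\nabla f_v(x)$, under which the noise residuals at $x$ and at $x^*$ are evaluated on the \emph{same} sample $v$ and cancel. This coupling step is where I expect the main obstacle to lie, since a crude triangle inequality would leave an irreducible $O(\sigma^2)$ constant that is incompatible with the strict ES statement.

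For the second arrow $ES(L)\Rightarrow ER(L)$, the argument is the short identity already used in the proof of Theorem~\ref{theo:hierarchy}. Expanding the square and using unbiasedness $\E{g(x)-g(x^*)} = \nabla f(x)-\nabla f(x^*)$ to eliminate the cross term yields
\begin{align*}
\E{\|g(x)-g(x^*)-(\nabla f(x)-\nabla f(x^*))\|^2} = \E{\|g(x)-g(x^*)\|^2} - \|\nabla f(x)-\nabla f(x^*)\|^2 \leq \E{\|g(x)-g(x^*)\|^2}.
\end{align*}
Applying ES$(L)$ to the right-hand side gives the ER$(L)$ bound with the same constant, which is exactly the claim.

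For the separation $ER\nRightarrow BV$, I would give a quadratic counterexample with multiplicative noise. Take $f(x)=\tfrac{1}{2}x^2$ on $\R$, so $x^*=0$ and $\nabla f(x)=x$; let $\eta$ be a mean-zero, unit-variance scalar random variable (e.g.\ Rademacher) and set $g(x)=(1+\eta)x$. Then $\E{g(x)}=x=\nabla f(x)$, $g(x^*)=0$, and
\begin{align*}
\E{\|g(x)-g(x^*)-(\nabla f(x)-\nabla f(x^*))\|^2} = \E{|\eta x|^2} = x^2 = 2\bigl(f(x)-f^*\bigr),
\end{align*}
so $g\in ER(1)$. On the other hand $\E{\|g(x)-\nabla f(x)\|^2}=\E{\eta^2}\,x^2 = x^2$, which is unbounded as $|x|\to\infty$, so BV fails. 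This completes the separation and the proof plan.
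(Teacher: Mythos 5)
Your second arrow and your separating example are sound: the exact-variance identity $\E{\|g(x)-g(x^*)-(\nabla f(x)-\nabla f(x^*))\|^2}=\E{\|g(x)-g(x^*)\|^2}-\|\nabla f(x)-\nabla f(x^*)\|^2$ is precisely how the paper passes from ES to ER in Theorem~\ref{theo:hierarchy}, and your one-dimensional multiplicative-noise example $f(x)=\tfrac12 x^2$, $g(x)=(1+\eta)x$ is a clean scalar variant of the paper's least-squares counterexample (row sampling of $\tfrac{1}{2n}\|Ax-b\|^2$), so that part of the plan is fine.

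The genuine gap is in the first arrow. Your plan is to deduce the strict ES definition~\eqref{eq:expsmooth} with the sharp constant $L$ from \eqref{eq:BV}+\eqref{eq:WS}, and you rightly flag the ``coupling/cancellation'' step as the obstacle --- but that step cannot be repaired: even when $g(x)$ and $g(x^*)$ are evaluated on the \emph{same} sample $v$, the noise residuals $g(x)-\nabla f(x)$ and $g(x^*)-\nabla f(x^*)$ are state-dependent and do not cancel in general, and BV says nothing about how they are correlated across $x$ and $x^*$. In particular, near $x^*$ the strict bound $\E{\|g(x)-g(x^*)\|^2}\leq 2L(f(x)-f^*)$ forces the left-hand side to vanish, while bounded but uncorrelated (or discontinuously varying) noise keeps it of order $\sigma^2$; so \eqref{eq:BV}+\eqref{eq:WS} alone simply do not imply \eqref{eq:expsmooth} with constant $L$. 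The paper avoids this entirely: its proof announces at the outset that ``$ES(L)$'' and ``$ER(L)$'' in this theorem mean the relaxed variance-bound forms \eqref{eq:varbndcL2} and \eqref{eq:varbndrho2} (cf.\ the remark after Lemma~\ref{lem:varbndcL} that \eqref{eq:varbndcL2} is what is used, and referred to, as the ES condition). With that reading the argument is a one-liner: by Young's inequality
\begin{align*}
\E{\norm{g(x)}^2} \;\leq\; 2\,\E{\norm{g(x)-\nabla f(x)}^2} + 2\norm{\nabla f(x)}^2 \;\overset{\eqref{eq:BV}+\eqref{eq:WS}}{\leq}\; 4L\bigl(f(x)-f(x^*)\bigr) + 2\sigma^2,
\end{align*}
which is \eqref{eq:varbndcL2} with $\cL=L$, and adding the nonnegative term $\norm{\nabla f(x)}^2$ gives \eqref{eq:varbndrho2} with $\rho=L$. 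The $O(\sigma^2)$ term you feared is ``incompatible with the strict ES statement'' is in fact exactly the additive constant these relaxed conditions are designed to carry, so the fix is not a cleverer coupling argument but proving the statement in the form the paper actually intends.
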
 
\begin{proof}
We will prove the alternative form of the ER condition given in~\eqref{eq:varbndrho2} and the ES condition given further down in~\eqref{eq:varbndcL2}. Now assuming that \eqref{eq:BV} and \eqref{eq:WS} hold we have that
\begin{align*}
\E{\norm{g(x)}^2} & = \E{\norm{g(x) - \nabla f(x) + \nabla f(x)}^2}  \\
&\leq 2 \E{\norm{g(x) - \nabla f(x) }^2} + 2\norm{\nabla f(x)}^2 \\
& \leq  4L (f(x) -f(x^*)) +2 \sigma^2.
\end{align*}
This shows that if~\eqref{eq:BV} holds with constant $\sigma^2$ and \eqref{eq:WS} with constant $L$ then the Expected Smoothness bound~\eqref{eq:varbndcL2} holds with constant $L.$ The fact that $ES(L)$ implies $ER(L)$ follows since
\[\E{\norm{g(x)}^2}  \leq 4L (f(x) -f(x^*)) +2 \sigma^2 \leq 4L (f(x) -f(x^*)) +2 \sigma^2 + \norm{\nabla f(x)}^2,\]
which shows that~\eqref{eq:varbndrho2} holds with $\rho = L.$ 

As an example for which the ER condition holds but~\ref{eq:BV} does not, take any smooth and strongly convex function such as $f(x) = \frac{1}{2n}\norm{Ax-b}^2 =\frac{1}{2n}\sum_{i=1}^n (A_{i}^\top x-b_i)^2 $ where $A\in\R^{d\times d}$ is invertible and $A_i$ is the ith row of $A$. Let $g(x) =\frac{1}{n} A_i (A_i^\top x-b)$ where $i$ is chosen uniformly at random. It is now easy to show that $\norm{g(x)}^2$ is unbounded. In contrast, we know that $g(x)$ satisfies the ER condition due to Theorem~\ref{theo:hierarchy} since strong convexity implies convexity which implies $x^*$--convexity.
 \end{proof}
\subsubsection{Separable, smooth and PL.} Let
$f(x) = \frac{1}{n} \sum_{i=1}^n f_i(x_i),$ which are smooth and interpolated. If in addition each $f_i(x_i)$ satisfies the PL condition with constant $\mu_i$ then there exists $x^* \in \cX^*$ such that $f(x)$ satisfies the PL condition with $\mu =\min_{i=1,\ldots, n} \frac{\mu_i}{n} .$ Indeed since
\begin{align*}
\norm{\nabla f(x)}^2 & = \sum_{i=1}^n\frac{1}{n^2} \norm{\nabla f_i(x_i)}^2 \\
& \geq  \sum_{i=1}^n\frac{\mu_i}{n^2} (f_i(x_i) - f_i(x^*)) \\
& \geq  \min_{i=1,\ldots, n} \frac{\mu_i}{n} (f(x) - f(x^*)). 
\end{align*}

\subsubsection{Proof of Lemma~\ref{lem:nonlsqPLexe} }
Consider the problem

\begin{equation}\label{eq:nonlinearsqr}
\min_{x \in \R^d} f(x) \eqdef \frac{1}{2n} \norm{F(x) -y}^2=\frac{1}{2n}  \sum_{i=1}^n (F_i(x) -y_i)^2 
\end{equation}
 where $y \in \R^n.$

\begin{proof}
The Jacobian of $F$ is given by
$DF(x)^\top = [\nabla F_1(x), \ldots, \nabla F_n(x)] \in \R^{d \times n}$.
Note that
\begin{align}
\nabla f(x) & = \; \frac{1}{n}DF(x)^\top (F(x) -y),\\
\nabla f_i(x) & = \; \nabla F_i(x) (F_i(x) -y_i).
\end{align}
Consequently $\nabla f(x^*) = \nabla f_i(x^*) =0.$ Finally, we  suppose that the $ F_i(x)$ functions are Lipschitz  and the Jacobian $DF(x)$ has full row rank, that is,
\begin{align}
 \norm{\nabla F_i(x) } & \leq \; u, \quad \forall i \in \{1,\ldots, n\}, \forall k. \label{eq:boundedgrads}\\
 \norm{DF(x) ^\top v} &  \geq \;  \ell \; \norm{v}, \quad \forall v . \label{eq:jacolower}
\end{align}
Under these assumptions, our objective~\eqref{eq:nonlinearsqr} satisfies the PL condition and the expected smoothness condition.
Indeed~\eqref{eq:expsmooth} holds using
\begin{align}
\EE{i}{\norm{\nabla f_i(x) -\nabla f_i(x^*) }^2} &= \EE{i}{ \norm{\nabla f_i(x) }^2} =  \frac{1}{n} \sum_{i=1}^n\norm{\nabla F_i(x)}^2  (F_i(x) -y_i)^2 \nonumber \\
& \leq \frac{u}{n} \sum_{i=1}^n  (F_i(x) -y_i)^2\;  =  \frac{u}{n} \norm{F(x) -y}^2 \nonumber \\
& = \; 2u (f(x) -f(x^*)),
\end{align}
where we used~\eqref{eq:boundedgrads} in the inequality. This shows that the expected smoothness condition hold with $u = \cL.$

By using the lower bound~\eqref{eq:jacolower}  we have that
\begin{align*}
\norm{\nabla f(x) }^2 & = \frac{1}{n^2}\norm{DF(x)^\top (F(x) -y)} \\
& \geq \frac{1}{n^2}\norm{F(x)-y}^2 \min_{v}  \frac{\norm{DF(x)^\top v}^2}{\norm{v^2}} \\
& \geq \ell f(x) = \ell( f(x) - f(x^*)), 
\end{align*}
which shows that the PL condition holds with $\mu = \ell.$
\end{proof}

 The condition~\eqref{eq:jacolower} is hard to verify, and somewhat unlikely to hold for all $x \in \R^d.$  
 Though if we had consider a closed and bounded constraint $\cX \subset \R^d$, and applied the projected SGD method, then~\eqref{eq:boundedgrads} is more likely to hold. For instance, assuming that~\eqref{eq:jacolower} holds in neighborhood of the solution is the typical assumption used to prove the assymptotic convergence of the Gauss-Newton method (see Theorem~10.1 in~\cite{wright1999numerical}).
 
 \section{Additional Convergence Analysis Results}
\label{AppendixTheoryAdditional}

 \subsection{Convergence of SGD for Quasar Strongly Convex functions}
 \label{sec:strongquasar}
In this section we develop specialized theorems for quasar strongly convex functions,
\begin{definition}[Quasar strongly convex]
Let $\zeta>0$ and $\lambda\geq0$. Let $x^* \in\cX^*$ . We that  $f$ is $(\zeta, \mu)$- quasar strongly convex with respect to $x^*$ if for all $x \in \R^n$,
\begin{equation}
\label{eq:quasar-convex-strongly}
f(x^*) \geq f(x) +\frac{1}{\zeta} \langle \nabla f(x), x^*-x\rangle + \frac{\lambda}{2}\|x^*-x\|^2.
\end{equation}
For shorthand we write $f \in QSC(\zeta,\lambda)$.  

Note that If~\eqref{eq:quasar-convex-strongly} holds with $\lambda=0$ we say that the function $f$ is $\zeta$-quasar-convex and we write $f \in QC(\zeta)$ (same to \eqref{eq:quasar-convex} from the main paper).
\end{definition}

\subsubsection{Constant Step-size}
When $\lambda =1$ we say that $f$ is  star-strong convexity, but it is also known in the literature as quasi-strong convexity \cite{Necoara-2018, gower2019sgd} or weak strong convexity \cite{karimi2016linear}. Star-strong convexity is also used in \cite{Necoara-2018} for the analysis of gradient and accelerated gradient descent and in \cite{gower2019sgd} for the analysis of SGD.  
The following theorem is a generalization of Theorem 1 in~\cite{gower2019sgd} to quasar strongly convex functions and under the assumption of expected residual.
\begin{theorem}\label{theo:strcnvlin}
Let $\zeta>0$. Assume $f$ is $(\zeta, \lambda)$-quasar-strongly convex with respect to $x^*$ and $g \in ER(\rho)$.
Choose   $\gamma^k=\gamma \in (0, \frac{\zeta}{\gamma(2\rho + L)})$ for all $k$. Then iterates of  SGD  given by (\ref{eq:sgdstep}) satisfy: 
\begin{equation}\label{eq:convsgd}
\mathbb{E} \| x^k - x^* \|^2 \leq \left( 1 - \gamma \zeta \lambda \right)^k \| x^0 - x^* \|^2 + \frac{2 \gamma \sigma^2}{\zeta \lambda}. 
\end{equation}
\end{theorem}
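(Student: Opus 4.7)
The plan is to run the standard strongly-convex SGD template but with quasar-strong convexity~\eqref{eq:quasar-convex-strongly} replacing ordinary strong convexity, and with the expected-residual-based variance bound from Lemma~\ref{lem:varbndrho} replacing a bounded-variance assumption. Expanding the SGD update $x^{k+1} = x^k - \gamma g(x^k)$ and taking expectation conditional on $x^k$ gives the standard three-term decomposition
\[
\E{\|x^{k+1}-x^*\|^2\mid x^k} \;=\; \|x^k-x^*\|^2 - 2\gamma\langle \nabla f(x^k), x^k-x^*\rangle + \gamma^2 \E{\|g(x^k)\|^2\mid x^k}.
\]

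Next, I would exploit the two structural assumptions separately on the cross term and on the second-moment term. Rearranging~\eqref{eq:quasar-convex-strongly} provides the key lower bound
\[
\langle \nabla f(x^k), x^k-x^*\rangle \;\geq\; \zeta\bigl(f(x^k)-f^*\bigr) + \tfrac{\zeta\lambda}{2}\|x^k-x^*\|^2,
\]
where the second summand is precisely what produces the contractive factor $(1-\gamma\zeta\lambda)$. For the second-moment term, I would combine Lemma~\ref{lem:varbndrho} with $L$--smoothness of $f$ (so that $\|\nabla f(x^k)\|^2 \leq 2L(f(x^k)-f^*)$) to obtain $\E{\|g(x^k)\|^2} \leq 2(2\rho+L)(f(x^k)-f^*) + 2\sigma^2$, exactly as in the proof of Theorem~\ref{theo:master-quasar-convex-res}.

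Plugging both bounds into the decomposition yields the one-step recursion
\[
\E{\|x^{k+1}-x^*\|^2 \mid x^k} \;\leq\; (1-\gamma\zeta\lambda)\|x^k-x^*\|^2 - 2\gamma\bigl(\zeta-\gamma(2\rho+L)\bigr)(f(x^k)-f^*) + 2\gamma^2\sigma^2.
\]
The step-size restriction $\gamma \leq \tfrac{\zeta}{2\rho+L}$ (this is what the interval in the statement is meant to encode) guarantees $\zeta-\gamma(2\rho+L)\geq 0$, so the functional suboptimality term is non-positive and can be dropped. Taking full expectation gives $\E{\|x^{k+1}-x^*\|^2} \leq (1-\gamma\zeta\lambda)\E{\|x^k-x^*\|^2} + 2\gamma^2\sigma^2$.

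Finally, I would unroll this linear recursion and sum the geometric series $\sum_{j=0}^{k-1}(1-\gamma\zeta\lambda)^j \leq \tfrac{1}{\gamma\zeta\lambda}$ to arrive at~\eqref{eq:convsgd}. There is no genuine obstacle: the proof is a direct transplant of the argument behind Theorem~\ref{theo:master-quasar-convex-res}, the only new ingredient being the $\tfrac{\lambda}{2}\|x-x^*\|^2$ bonus in~\eqref{eq:quasar-convex-strongly} which supplies contraction. The most delicate bookkeeping is just ensuring the step-size condition simultaneously kills the $(f(x^k)-f^*)$ term and keeps $1-\gamma\zeta\lambda\in[0,1)$; both hold under the stated range.
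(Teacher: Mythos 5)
Your proposal is correct and follows essentially the same route as the paper's own proof: expand the squared distance, lower-bound the inner product via quasar-strong convexity~\eqref{eq:quasar-convex-strongly}, bound the second moment by the ER-plus-smoothness estimate $\E{\norm{g(x)}^2}\leq 2(2\rho+L)(f(x)-f^*)+2\sigma^2$ (the paper's~\eqref{eq:varbndrho3}), drop the nonpositive suboptimality term using $\gamma(2\rho+L)\leq\zeta$, and unroll the recursion with the geometric-series bound $\sum_j(1-\gamma\zeta\lambda)^j\leq 1/(\gamma\zeta\lambda)$. You also correctly read the step-size interval in the statement as $\gamma\leq\zeta/(2\rho+L)$ (the printed bound contains a typo) and correctly note the implicit $L$-smoothness of $f$ needed for the variance bound, both consistent with the paper's argument.
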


\begin{proof}
Let $r^k = x^k -x^*$. From (\ref{eq:sgdstep}), we have 
\begin{align*}
\label{najxs}
\| r^{k+1}  \|^2 &\overset{\eqref{eq:sgdstep}}{ =} \; \|  x^k -x^* -\gamma \nabla f_{v^k}(x^k) \|^2\notag\\
&=\; \|  r^k \|^2 - 2\gamma \langle  r^k, \nabla f_{v^k}(x^k) \rangle + \gamma^2 \|\nabla f_{v^k} (x^k)\|^2. \notag
\end{align*}
Taking expectation conditioned on $x^k$ we obtain:
\begin{align*}
\Exp_{\cD}{\|r^{k+1}\|^2} = & \; \| r^k \|^2 - 2\gamma \langle r^k, \nabla f(x^k) \rangle + \gamma^2 \Exp_{\cD}\|\nabla f_{v^k} (x^k)\|^2\\
\overset{\eqref{eq:quasar-convex-strongly}}{ \leq } & \; (1- \gamma \zeta \lambda) \| r^k \|^2 - 2\zeta\gamma [f(x^k)-f(x^*)] + \gamma^2 \Exp_{\cD}\|\nabla f_{v^k} (x^k)\|^2.
\end{align*}
Taking expectations again and using \eqref{eq:varbndrho3}
\begin{align*}
\Exp{\|r^{k+1}\|^2}
\overset{\eqref{eq:varbndrho3}}{ \leq } & \;(1- \gamma \zeta \lambda) \| r^k \|^2 - 2\zeta\gamma [f(x^k)-f(x^*)] + \gamma^2  2(2\rho + L)(f(x) - f(x^*) +2 \gamma^2 \sigma^2\\
\leq & \; (1- \gamma \zeta \lambda) \Exp \| r^k \|^2  + 2\gamma (\gamma(2\rho + L)- \zeta)  \Exp [f(x^k)-f(x^*)] + 2 \gamma^2 \sigma^2\\
\leq &\;(1- \gamma \zeta \lambda) \Exp \| r^k \|^2 + 2\gamma^2\sigma^2,
\end{align*}
where we used in the last inequality that $\gamma(2\rho + L)\leq \zeta$ since $\gamma \leq \frac{\zeta}{(2\rho + L)} .$
Recursively applying the above and summing up the resulting geometric series gives
\begin{eqnarray}
\mathbb{E} \|r^k\|^2 &\leq &  \left( 1 - \gamma \zeta \lambda \right)^k \|r^0\|^2 + 2\sum_{j=0}^{k-1} \left( 1 -\gamma \zeta \lambda \right)^j \gamma^2\sigma^2 \nonumber\\
&\leq &   \left( 1 - \gamma \zeta \lambda \right)^k \|r^0\|^2 + \frac{2\gamma \sigma^2}{ \zeta \lambda}.\label{eq:la9ja38jf}
\end{eqnarray}
\end{proof}

\subsubsection{Stochastic Polyak Step-size (SPS)}
\begin{theorem}
\label{theo:SPSquasarstronglyconvex}
Assume interpolation~\ref{ass:over} holds.  Let all $f_i$ be $L_i$-smooth and $(\zeta, \lambda)$-quasar strongly convex functions~\eqref{eq:quasar-convex} with respect to  $x^*\in\cX^*.$\footnote{This implies that function $f(x)=\sum_{i=1}^n f_i(x)$ is also $(\zeta, \lambda)$- strongly quasar-convex function with respect to  $x^*\in\cX^*$ (see \cite{hinder2019near}).}
 SGD with $\text{SPS}$ with $c =1/2\zeta$ converges as: 
\begin{eqnarray}\label{eq:SPSquasarstronglyconvex}
\Exp\|x^{k}-x^*\|^2 \leq\left(1-\frac{\lambda \zeta}{ \E{L_v}} \right)^k\|x^0-x^*\|^2 .
\end{eqnarray}
\end{theorem}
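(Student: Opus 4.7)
The plan is to mimic the proof of Theorem~\ref{theo:SPLquasarconvex} but exploit the extra $\frac{\lambda}{2}\|x-x^*\|^2$ term coming from quasar strong convexity, which should allow us to bypass the need to bound $f(x^k)-f^*$ and instead produce a direct contraction in $\|x^k-x^*\|^2$. First I would start, as in the proof of Theorem~\ref{theo:SPLquasarconvex}, from the identity
\begin{equation*}
\|x^{k+1}-x^*\|^2 \;=\; \|x^k-x^*\|^2 - 2\gamma_k\langle \nabla f_v(x^k), x^k-x^*\rangle + \gamma_k^2\|\nabla f_v(x^k)\|^2 .
\end{equation*}
Since each $f_i$ is $(\zeta,\lambda)$-quasar strongly convex with respect to $x^*$, the convex combination $f_v=\tfrac{1}{n}\sum_i v_i f_i$ is again $(\zeta,\lambda)$-quasar strongly convex with respect to $x^*$ (for any sampling realization, because $v_i\geq 0$ and $x^*$ is a common minimizer under interpolation~\ref{ass:over}). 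Applying this in the form $\langle \nabla f_v(x^k),x^k-x^*\rangle \geq \zeta[f_v(x^k)-f_v^*] + \tfrac{\lambda\zeta}{2}\|x^k-x^*\|^2$ gives
\begin{equation*}
\|x^{k+1}-x^*\|^2 \;\leq\; (1-\gamma_k\lambda\zeta)\|x^k-x^*\|^2 - 2\zeta\gamma_k[f_v(x^k)-f_v^*] + \gamma_k^2\|\nabla f_v(x^k)\|^2 .
\end{equation*}

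The second step is to eliminate the $[f_v(x^k)-f_v^*]$ terms using the SPS rule~\eqref{SPLRv}, which gives $\gamma_k^2\|\nabla f_v(x^k)\|^2 = \tfrac{\gamma_k}{c}(f_v(x^k)-f_v^*)$. Substituting this, the subsequent two terms combine into $-\gamma_k(2\zeta - 1/c)[f_v(x^k)-f_v^*]$, and the choice $c=1/(2\zeta)$ makes this coefficient vanish exactly, leaving the clean pathwise bound
\begin{equation*}
\|x^{k+1}-x^*\|^2 \;\leq\; (1-\gamma_k\lambda\zeta)\,\|x^k-x^*\|^2 .
\end{equation*}
This is the place where quasar \emph{strong} convexity really pays off: in the merely quasar-convex case (Theorem~\ref{theo:SPLquasarconvex}) one could only make this coefficient negative (requiring $c>1/(2\zeta)$) and then had to telescope suboptimalities, whereas here the strong-convexity slack $\tfrac{\lambda\zeta}{2}\|x-x^*\|^2$ already provides per-iterate contraction.

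For the final step, I would take expectation with respect to the sampling $v$ conditional on $x^k$, which turns the bound into $\E[\|x^{k+1}-x^*\|^2 \mid x^k] \leq (1-\lambda\zeta\,\E_v[\gamma_k])\|x^k-x^*\|^2$, and then lower bound $\E_v[\gamma_k]$ using the Jensen-based estimate~\eqref{eq:a8ejh8s434}: with $c=1/(2\zeta)$ this yields $\E_v[\gamma_k]\geq \tfrac{1}{2c\,\E[L_v]}=\tfrac{\zeta}{\E[L_v]}$, since each $f_i$ is $L_i$-smooth and hence $f_v$ is $L_v$-smooth with $L_v=\tfrac{1}{n}\sum_i v_i L_i$. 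Taking total expectation and iterating produces the claimed bound~\eqref{eq:SPSquasarstronglyconvex}.

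The main (minor) obstacle I anticipate is verifying that $f_v$ inherits quasar strong convexity with respect to a common $x^*$ for every realization of $v$: this uses interpolation~\ref{ass:over}, which guarantees that $x^*$ minimizes every $f_i$ simultaneously and therefore is a shared quasar-strong-convexity center, so the convex combination defining $f_v$ preserves inequality~\eqref{eq:quasar-convex-strongly}. Beyond that the calculation is mechanical, and the only subtlety is being careful that Jensen's inequality is applied to the convex function $1/L_v$ (on $L_v>0$) to produce the $1/\E[L_v]$ factor appearing in the final rate.
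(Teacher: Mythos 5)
Your route is the same as the paper's: expand $\|x^{k+1}-x^*\|^2$, apply quasar strong convexity of $f_v$ at $x^*$, use the SPS rule~\eqref{SPLRv} to turn $\gamma_k^2\|\nabla f_v(x^k)\|^2$ into $\tfrac{\gamma_k}{c}(f_v(x^k)-f_v^*)$ so that the choice of $c$ cancels the suboptimality term, then take conditional expectation, lower bound $\E{\gamma_k}$ via smoothness plus Jensen as in~\eqref{eq:a8ejh8s434}, and iterate with the tower property. Your side remark about $f_v$ inheriting quasar strong convexity is also essentially what the paper implicitly uses (with the small caveat that the coefficient of the $\tfrac{\lambda}{2}\|x-x^*\|^2$ term in the averaged inequality is $\lambda\cdot\tfrac{1}{n}\sum_i v_i$, which equals $\lambda$ for $b$--minibatch sampling but not for every realization of an arbitrary sampling).

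There is, however, a gap at the very last step: your (faithful) application of Definition~\eqref{eq:quasar-convex-strongly} gives the pathwise contraction $\|x^{k+1}-x^*\|^2\le(1-\gamma_k\lambda\zeta)\|x^k-x^*\|^2$, and with $c=1/(2\zeta)$ the bound $\E{\gamma_k}\ge\tfrac{1}{2c\E{L_v}}=\tfrac{\zeta}{\E{L_v}}$ then yields the rate $\bigl(1-\tfrac{\lambda\zeta^2}{\E{L_v}}\bigr)^k$, \emph{not} the stated $\bigl(1-\tfrac{\lambda\zeta}{\E{L_v}}\bigr)^k$; since $\zeta\le 1$ this is strictly weaker whenever $\zeta<1$, so your closing sentence ("produces the claimed bound") does not follow from your own chain of inequalities. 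The paper's proof reaches the stated rate only because it writes the quasar-strong-convexity step as $(1-\lambda\gamma_k)\|x^k-x^*\|^2-2\zeta\gamma_k[f_v(x^k)-f_v^*]+\dots$, i.e.\ it drops the factor $\zeta$ on the quadratic term --- a step that is inconsistent with Definition~\eqref{eq:quasar-convex-strongly} and with the paper's own Theorem~\ref{theo:strcnvlin}, where the contraction correctly appears as $1-\gamma\zeta\lambda$. So your derivation is arguably the correct one and exposes a missing $\zeta$ in the paper's argument; but as a proof of the theorem \emph{as stated} it does not close, and you should either state the weaker rate $\bigl(1-\tfrac{\lambda\zeta^2}{\E{L_v}}\bigr)^k$ as your conclusion or explicitly flag the discrepancy rather than assert that~\eqref{eq:SPSquasarstronglyconvex} follows.
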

\begin{proof}
\begin{eqnarray}
\label{eq:sc-inter}
\|x^{k+1}-x^*\|^2 &=&\|x^k-\gamma_k \nabla f_v(x^k)-x^*\|^2\notag\\
&=&\|x^k-x^*\|^2-2 \gamma_k \langle x^k-x^*, \nabla f_v(x^k) \rangle + \gamma_k^2 \| \nabla f_v(x^k)\|^2\notag\\
&\overset{\eqref{eq:quasar-convex-strongly}}{\leq}&(1-\lambda \gamma_k) \|x^k-x^*\|^2-2 \zeta \gamma_k \left[f_v(x^k)-f_v^*\right] + \gamma_k^2 \| \nabla f_v(x^k)\|^2\notag\\
&\overset{\eqref{SPLRv}}{=}&(1-\lambda \gamma_k) \|x^k-x^*\|^2-2 \zeta \gamma_k \left[f_v(x^k)-f_v^*\right] +  \frac{\gamma_k }{c} \left[f_i(x^k)-f_v^*\right] \notag\\
&=&(1-\lambda \gamma_k) \|x^k-x^*\|^2+\gamma_k \left(  \frac{1 }{c}-2 \zeta  \right) \left[f_v(x^k)-f_v^*\right]  \notag\\
&\overset{c\geq1/2\zeta}{ =} &(1-\lambda \gamma_k) \|x^k-x^*\|^2.
\end{eqnarray}
Since $f_i$ is $L_i$--smooth and $v\in \R^n_+$ we have that  $f_v$ is $L_v$--smooth where $L_v \eqdef \frac{1}{n}\sum_{i=1}^n v_i L_i.$ Consequently, 
taking expectation condition on $x^k$
\begin{eqnarray}
\Exp_{\cD}\|x^{k+1}-x^*\|^2
&\leq &\left(1-\lambda \Exp_{\cD}[\gamma_k] \right)\|x^k-x^*\|^2  \notag\\
&\overset{\eqref{eq:a8ejh8s434}}{\leq}&\left(1-\frac{\lambda}{2 c\E{L_v}} \right)\|x^k-x^*\|^2.
\end{eqnarray}
Taking expectations again and using the tower property:
\begin{eqnarray}
\Exp\|x^{k+1}-x^*\|^2
&\leq&\left(1-\frac{\lambda}{2 c \E{L_v}} \right)\Exp\|x^k-x^*\|^2.
\end{eqnarray}
\end{proof}

\begin{corollary}\label{cor:SPSquasarstronglycomplexmini}
If $v$ is the $b$--minibatch sampling, we have that $\E{L_v} \; \leq  \; L\frac{n(b-1)}{(n-1)b}+L_{\max}\frac{n-b}{(n-1)b}.$
Consequently, by selecting $c= \frac{1}{2\zeta}$ and given $\epsilon >0$, if we take $$ k\geq \left( \frac{ L}{ \zeta \lambda}\frac{n(b-1)}{(n-1)b}+\frac{L_{\max}}{ \zeta\lambda}\frac{n-b}{(n-1)b}\right)\log\left(\frac{\|x^0-x^*\|^2}{\epsilon} \right),$$ 
steps of SGD with the SPS step size then $\Exp\|x^{k}-x^*\|^2 \leq \epsilon.$
\end{corollary}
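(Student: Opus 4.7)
The plan is to combine Theorem~\ref{theo:SPSquasarstronglyconvex} with an upper bound on $\E{L_v}$ specific to the $b$--minibatch sampling, then extract the iteration complexity via a standard logarithm manipulation.

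First, I would specialise Theorem~\ref{theo:SPSquasarstronglyconvex} to $c = 1/(2\zeta)$, yielding the geometric contraction
\[
\Exp\|x^{k} - x^*\|^2 \;\leq\; \left(1 - \frac{\lambda \zeta}{\E{L_v}}\right)^{\!k} \|x^0 - x^*\|^2.
\]
The elementary inequality $\log(1/(1-x)) \geq x$ for $x \in [0,1)$ then shows that the left-hand side is at most $\epsilon$ as soon as $k \geq \tfrac{\E{L_v}}{\zeta \lambda} \log\bigl(\|x^0-x^*\|^2/\epsilon\bigr)$, so the whole corollary reduces to producing the right upper bound on $\E{L_v}$.

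Second, I would establish the advertised bound $\E{L_v} \leq \tfrac{n(b-1)}{(n-1)b}L + \tfrac{n-b}{(n-1)b}L_{\max}$ for the $b$--minibatch sampling. The right-hand side is the expected-smoothness constant $\cL(b)$ appearing in item~3 of Proposition~\ref{prop:master_lemma}, equivalently the specialisation of item~2 to $p_i = b/n$ and $c_2 = \tfrac{n(b-1)}{b(n-1)}$. The naive bound $L_v \leq \tfrac{1}{b}\sum_{i \in S} L_i$ only yields $\E{L_v} \leq \overline{L} \eqdef \tfrac{1}{n}\sum_i L_i$, which is generally looser; the proof therefore needs a sharper, realisation-dependent smoothness constant for $f_v = f_S$. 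One natural route is to exploit the identity $f_S = \tfrac{n}{b} f - \tfrac{n-b}{b} f_{S^c}$ together with smoothness bounds for $f$ (with constant $L$) and for $f_{S^c}$ (bounded in turn by $L_{\max}$), and then average over uniform $b$-subsets $S$, so that the resulting constant mixes $L$ (from the first term) and $L_{\max}$ (from the second) with the same weights as in $\cL(b)$.

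Combining the two steps gives the stated iteration complexity. The main obstacle is the second step: recovering the precise weights $\tfrac{n(b-1)}{(n-1)b}$ on $L$ and $\tfrac{n-b}{(n-1)b}$ on $L_{\max}$, rather than the strictly looser constant $\overline{L}$ obtained from the naive smoothness bound, relies on the same matrix-trace computation already used to derive $\cL(b)$ for the expected-smoothness constant in Proposition~\ref{prop:master_lemma}, re-interpreted as an estimate on $\E{L_v}$ itself.
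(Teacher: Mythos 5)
Your first step is exactly the paper's: specializing Theorem~\ref{theo:SPSquasarstronglyconvex} to $c=1/(2\zeta)$ and turning the contraction factor $1-\lambda\zeta/\E{L_v}$ into the complexity $k \geq \frac{\E{L_v}}{\zeta\lambda}\log\left(\|x^0-x^*\|^2/\epsilon\right)$ is correct and matches the paper's argument. The genuine gap is in your second step, which is the only substantive content of the corollary: the bound $\E{L_v} \leq L\frac{n(b-1)}{(n-1)b}+L_{\max}\frac{n-b}{(n-1)b}$ is asserted but never established, and neither of the routes you sketch delivers it. The decomposition $f_S = \frac{n}{b}f - \frac{n-b}{b}f_{S^c}$, combined with the triangle inequality for gradient-Lipschitz constants, only yields a realization-wise smoothness constant of order $\frac{n}{b}L + \frac{n-b}{b}L_{\max}$ for $f_S$; these weights are too large by a factor of roughly $n/b$ on the $L$ term and $n-1$ on the $L_{\max}$ term, and averaging over $S$ cannot repair a bound that is already too large for every realization. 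Your second suggestion, to ``re-interpret'' the trace computation of Proposition~\ref{prop:master_lemma} as a bound on $\E{L_v}$, conflates two different objects: that computation bounds the expected smoothness constant $\cL$, i.e.\ the constant in the functional inequality $\E{\norm{\nabla f_v(x)-\nabla f_v(x^*)}^2}\le 2\cL\left(f(x)-f(x^*)\right)$ anchored at $x^*$, whereas $\E{L_v}$ is the expectation of the realization-dependent smoothness constants of the subsampled functions; the inequality $\E{L_v}\le\cL(b)$ does not follow from that proposition without a separate argument. Note also that the bound is delicate: one must take $L_v$ to be the \emph{exact} smoothness constant $L_B$ of the realized batch function $f_B$, since the naive constant $\frac{1}{b}\sum_{i\in S}L_i$ has expectation $\overline{L}$, which already exceeds the claimed bound $\cL(n)=L$ at $b=n$.

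The paper closes precisely this gap by citation: it uses item 3 of Proposition~\ref{prop:master_lemma} to identify $\cL(b)=\frac{n(b-1)}{(n-1)b}L+\frac{n-b}{(n-1)b}L_{\max}$, and then invokes Lemma E.1 of \cite{gower2019sgd} for the inequality $\E{L_v}\le\cL(b)$ under $b$--minibatch sampling. To complete your proof you must either reproduce that lemma's argument or cite it explicitly; as written, the crucial inequality on $\E{L_v}$ remains unproved.
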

\begin{proof}
Since the interpolation condition implies that $f_i$ is convex around $x^*$, we have 
by Proposition~\ref{prop:master_lemma}  that the expected smoothness condition~\ref{eq:expsmooth} holds with $\cL(b)$ given in~\eqref{eq:cLminisupp}.
Furthermore, we have that from Lemma E.1 in~\cite{gower2019sgd} that $\E{L_v}  \leq \cL(b).$ 
Finally, from~\eqref{eq:SPSquasarstronglyconvex} we have that the iteration complexity is given by
\[k  \geq \frac{2c \E{L_v}}{ \lambda }\log\left(\frac{\|x^0-x^*\|^2}{\epsilon} \right).\]
Plugging in $c= \frac{1}{2\zeta}$ and the upperbound~\eqref{eq:cLminisupp} for $\cL(b)$ gives the result.
\end{proof}

\subsection{Convergence Analysis Results under Expected Smoothness}
\label{sec:convexpsmooth}
Our initial objective was to study the convergence of structured non-convex problems that also satisfied the Expected Smoothness (ES) Assumption~\ref{eq:ES}. The resulting rates we obtained for quasar convex functions are virtually the same that we obtained under the ER assumption, as one can see in the following section. The same cannot be said under the PL assumption. 

Rather, as we show in Section~\ref{sec:PLandES} that when analysis PL functions that also satisfied the ES condition, we found that we were not able to obtain the best known rates for full batch gradient descent, unlike Theorem~\ref{theo:PLConstant} when using the ER condition.
Of course, by Theorem~\ref{theo:hierarchy} we know that ES implies ER, thus it is clearly possible to obtain better rates using the ES condition. But since the ER condition is also weaker than the ES condition, we chose to focus this paper on the ER condition.

\subsubsection{Quasar-Convex and Expected Smoothness}

For this next theorem we first need the following lemma.
\begin{lemma}
\label{lem:varbndcL}
Suppose $f$ satisfies the expected smoothness Assumption~\ref{eq:ES}. It follows that
\begin{align}
\label{eq:varbndcL2}
\EE{\cD}{\|g (x)\|^2 } & \leq  4  \cL ( f(x)-f^* )    +2 \sigma^2,
\end{align}
\end{lemma}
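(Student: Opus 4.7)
The plan is to mimic the proof of Lemma~\ref{lem:varbndrho} but with $g(x^*)$ playing the role of the ``residual offset'' instead of $\nabla f(x)$. The idea is simply to split $g(x)$ as $g(x) = \bigl(g(x) - g(x^*)\bigr) + g(x^*)$ and then bound each piece separately using ES for the first piece and Assumption~\ref{ass:grad-noise} for the second. This is why, unlike the ER version~\eqref{eq:varbndrho2}, we do not pick up an extra $\|\nabla f(x)\|^2$ term: ES already controls the full difference $g(x) - g(x^*)$, not just its residual from the true gradient.

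First I would apply the elementary inequality $\|a+b\|^2 \le 2\|a\|^2 + 2\|b\|^2$ with $a = g(x) - g(x^*)$ and $b = g(x^*)$ to obtain
\[
\|g(x)\|^2 \;\le\; 2\,\|g(x) - g(x^*)\|^2 + 2\,\|g(x^*)\|^2.
\]
Taking expectation over $\cD$ on both sides and using the ES assumption~\eqref{eq:ES-main} on the first term yields
\[
\EE{\cD}{\|g(x)\|^2} \;\le\; 4\cL\bigl(f(x) - f(x^*)\bigr) + 2\,\EE{\cD}{\|g(x^*)\|^2}.
\]
Finally I would invoke Assumption~\ref{ass:grad-noise}, which gives $\EE{\cD}{\|g(x^*)\|^2} \le \sigma^2$ for any $x^* \in \cX^*$, to conclude~\eqref{eq:varbndcL2}.

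There is no real obstacle here: the argument is a one-line application of the triangle-like inequality $\|a+b\|^2 \le 2\|a\|^2 + 2\|b\|^2$ combined with the two hypotheses. The only mild subtlety is that ES is typically stated for a particular $x^* \in \cX^*$, but since Assumption~\ref{ass:grad-noise} defines $\sigma^2$ as the supremum of $\EE{\cD}{\|g(x^*)\|^2}$ over $\cX^*$, the bound~\eqref{eq:varbndcL2} holds uniformly.
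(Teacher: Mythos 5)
Your proposal is correct and is essentially the paper's own argument: both split $g(x)$ as $(g(x)-g(x^*)) + g(x^*)$, apply $\|a+b\|^2 \le 2\|a\|^2 + 2\|b\|^2$, bound the first term via ES and the second via the gradient-noise Assumption~\ref{ass:grad-noise} (taking the supremum over $x^* \in \cX^*$). No gaps.
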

\begin{proof}
Using
\begin{align*}
\norm{g(x) }^2  & \leq 2\norm{g(x) -g(x^*)}^2  +2\norm{g(x^*)}^2,
\end{align*}
and taking the supremum  over $x^* \in \cX^*$  and expectation  together with~\eqref{eq:expsmooth}  gives the result.
\end{proof}
In our forthcoming proofs we only make use of~\eqref{eq:varbndcL2}, and as such, we will also refer to~\eqref{eq:varbndcL2} as the ES condition.

\begin{theorem} \label{theo:master-quasar-convex-ES}
Assume $f(x)$ is $\zeta-$quasar-convex \eqref{eq:quasar-convex} and $g \in ES(\cL)$. Let $0<\gamma_k<\frac{\zeta}{2\cL}$ for all $k \in \N$. Then, for every $x^* \in \cX^*$ 
\begin{eqnarray}\label{eq:master-quasar-convex-ES}
\min_{t=0,\dots,k-1}\E{f(x^t) - f(x^*)} \leq \frac{\norm{x^0 - x^*}^2}{2\sum_{i=0}^{k-1}\gamma_i(\zeta - 2\gamma_i\cL)}  + \sigma^2 \frac{\sum_{t=0}^{k-1}\gamma_t^2}{\sum_{i=0}^{k-1}\gamma_i(\zeta -2\gamma_i\cL)}, \label{eq:josj48jrso9j84}
\end{eqnarray}
Moreover:
	\begin{enumerate}
	\item if $\forall k \in \N, \; \gamma_k = \gamma \leq \frac{\zeta}{2\cL}$, then $\forall k \in \N$,
	\begin{equation}\label{eq:master-quasar-const-ES}
\min_{t=0,\dots,k-1}\E{f(x^t) - f(x^*)} \leq \frac{\norm{x^0 - x^*}^2}{2\gamma(\zeta-2\gamma\cL)k} + \frac{\gamma\sigma^2}{\zeta-2\gamma\cL},
	\end{equation}
	\item suppose algorithm \eqref{eq:sgdstep} is run for $T$ iterations. If $\forall k=0,\dots,T-1, \; \gamma_k = \frac{\gamma}{\sqrt{T}}$ with $\gamma \leq \frac{\zeta}{4\cL}$,
	\begin{equation}
\min_{t=0,\dots,T-1}\E{f(x^t) - f(x^*)} \leq \frac{\norm{x^0 - x^*}^2 + 2\gamma^2\sigma^2}{\gamma\sqrt{T}},
	\end{equation}
	\item $\forall k \in \N, \; \gamma_k = \frac{\gamma}{\sqrt{k+1}}$ with $\gamma \leq \frac{\zeta}{2\cL}$, then $\forall k \in \N$,
	\begin{equation}\label{eq:adn8ha8}
\min_{t=0,\dots,k-1}\E{f(x^t) - f(x^*)} \leq \frac{\norm{x^0 - x^*}^2 + 2\gamma^2\sigma^2(\log(k)+1)}{4\gamma(\zeta(\sqrt{k} - 1)-\gamma\cL(\log(k) + 1)} \sim O\left(\frac{\log(k)}{\sqrt{k}}\right).
	\end{equation}
	\end{enumerate}
\end{theorem}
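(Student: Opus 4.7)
The plan is to mirror the proof of Theorem~\ref{theo:master-quasar-convex-res} given earlier in the appendix, with the ES-derived variance bound~\eqref{eq:varbndcL2} replacing the ER-derived bound~\eqref{eq:varbndrho3}. Because ES already bounds $\E{\|g(x)\|^2}$ in terms of suboptimality, we do not need to invoke $L$-smoothness of $f$ at all; this is the reason the constant $2\rho + L$ in the ER theorem is replaced by the single constant $2\cL$ here.

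First I would expand
\[
\|x^{k+1}-x^*\|^2 \;=\; \|x^k-x^*\|^2 - 2\gamma_k\langle g(x^k), x^k-x^*\rangle + \gamma_k^2 \|g(x^k)\|^2,
\]
and take conditional expectation given $x^k$, using unbiasedness $\E{g(x^k)\mid x^k}=\nabla f(x^k)$ together with quasar-convexity~\eqref{eq:quasar-convex} to obtain $\langle \nabla f(x^k), x^k-x^*\rangle \geq \zeta(f(x^k)-f^*)$, and Lemma~\ref{lem:varbndcL} to bound $\E{\|g(x^k)\|^2\mid x^k} \leq 4\cL(f(x^k)-f^*) + 2\sigma^2$. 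Combining and rearranging yields
\[
2\gamma_k(\zeta - 2\gamma_k\cL)\,\E{f(x^k)-f^*\mid x^k} \;\leq\; \|x^k-x^*\|^2 - \E{\|x^{k+1}-x^*\|^2\mid x^k} + 2\gamma_k^2\sigma^2,
\]
and the step-size assumption $\gamma_k < \zeta/(2\cL)$ ensures the coefficient on the left is positive. Taking total expectation, summing from $t=0$ to $k-1$, telescoping, dropping the nonnegative terminal term $\E{\|x^k-x^*\|^2}$, and dividing both sides by $2\sum_{i=0}^{k-1}\gamma_i(\zeta-2\gamma_i\cL)$ gives the master bound~\eqref{eq:master-quasar-convex-ES}, since $\min_t \E{f(x^t)-f^*}$ is upper bounded by any weighted average with nonnegative weights summing to one.

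For the three step-size specializations, I would substitute into the master bound~\eqref{eq:josj48jrso9j84} directly. For the constant step size $\gamma_k \equiv \gamma$, the denominator becomes $2\gamma(\zeta-2\gamma\cL)k$ and the numerator of the noise term becomes $\gamma^2\sigma^2 k$, giving~\eqref{eq:master-quasar-const-ES} immediately. For the finite horizon choice $\gamma_k = \gamma/\sqrt{T}$ with $\gamma\leq \zeta/(4\cL)$, we have $\zeta - 2\gamma_k\cL \geq \zeta/2 \geq 1/2$ for $T\geq 1$ (absorbing $\zeta\leq 1$ if needed, or more cleanly noting the coefficient is at least $\zeta/2$), so the bound simplifies using $\sum_{k=0}^{T-1}\gamma_k^2 = \gamma^2$ and $\sum_{k=0}^{T-1}\gamma_k(\zeta-2\gamma_k\cL) \geq (\gamma/\sqrt{T})\cdot T \cdot (\zeta/2)$, yielding the stated rate after minor bookkeeping. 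For the decreasing step size $\gamma_k = \gamma/\sqrt{k+1}$, I would use the standard integral bounds $\sum_{t=0}^{k-1} 1/(t+1) \leq \log(k)+1$ and $\sum_{t=0}^{k-1} 1/\sqrt{t+1} \geq 2(\sqrt{k}-1)$, exactly as in the derivation of~\eqref{eq:quasarconvdecrease}, to obtain~\eqref{eq:adn8ha8}.

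No step is really an obstacle; the only place requiring a bit of care is verifying that each specialized step-size respects $\gamma_k < \zeta/(2\cL)$ throughout the run (trivial for case 1 and 3 by monotonicity, and case 2 uses the slightly tighter $\gamma \leq \zeta/(4\cL)$ to force $\zeta - 2\gamma_k \cL \geq \zeta/2$, which is what makes the simplification of the noise term work). The proof is otherwise a direct transcription of the ER-based argument with $(2\rho+L)$ replaced by $2\cL$ and~\eqref{eq:varbndrho3} replaced by~\eqref{eq:varbndcL2}.
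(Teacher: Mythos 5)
Your proposal is correct and follows essentially the same route as the paper's own proof: expand $\|x^{k+1}-x^*\|^2$, apply quasar-convexity together with the ES variance bound~\eqref{eq:varbndcL2} from Lemma~\ref{lem:varbndcL} (no smoothness of $f$ needed), telescope, bound the minimum by the weighted average, and then specialize the step sizes using the same integral bounds as in the ER case. The only small wrinkle is in case~2, where your parenthetical ``$\zeta/2 \geq 1/2$'' holds only when $\zeta = 1$; the clean statement is that $\gamma \leq \zeta/(4\cL)$ gives $\zeta - 2\gamma_k\cL \geq \zeta/2$, which is exactly the (equally terse) simplification the paper itself makes.
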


\begin{proof}
We have:
\begin{align*}
\norm{x^{k+1} - x^*}^2 &= \norm{x^k - x^*}^2 - 2\gamma_k\langle g(x^k), x^k - x^*\rangle + \gamma_k^2\norm{g(x^k)}^2
\end{align*}
Hence, taking expectation conditioned on $x_k$, we have:
\begin{eqnarray*}
\EE{\cD}{\norm{x^{k+1} - x^*}^2} &=& \norm{x^k - x^*}^2 - 2\gamma_k\langle \nabla f(x^k), x^k - x^*\rangle + \gamma_k^2\EE{\cD}{\norm{\nabla f_{v_k}(x_k)}^2}\\
&\overset{\eqref{eq:quasar-convex}+\eqref{eq:varbndcL2}}{\leq}& \norm{x^k - x^*} - 2\gamma_k(\zeta - 2\gamma_k\cL)(f(x^k) - f^*)) + 2\gamma_k^2\sigma^2.
\end{eqnarray*}
Rearranging and taking expectation, we have
\begin{align*}
2\gamma_k(\zeta - 2\gamma_k\cL)\E{f(x^k) - f^*} \leq \E{\norm{x^k - x^*}^2} - \E{\norm{x^{k+1} - x^*}^2} + 2\gamma_k^2\sigma^2.
\end{align*}
Summing over $k =0,\ldots, t-1$ and using telescopic cancellation gives
\begin{align*}
2\sum_{k=0}^{t-1}\gamma_k(\zeta - 2\gamma_k\cL)\E{f(x_k) - f^*} \leq \norm{x^0 - x^*}^2 - \E{\norm{x^{k} - x^*}^2} + 2\sigma^2\sum_{k=0}^{t-1}\gamma_k^2.
\end{align*}
Since $\E{\norm{x^{k} - x^*}^2} \geq 0$, dividing both sides by $2\sum_{i=1}^{t}\gamma_i(\zeta - 2\gamma_k\cL)$ gives:
\begin{align*}
\sum_{k=0}^{t-1}\E{\frac{\gamma_k(\zeta - 2\gamma_k\cL)}{\sum_{i=0}^{t-1}\gamma_i(\zeta - 2\gamma_i\cL)}(f(x^k) - f^*)} \leq \frac{\norm{x^0 - x^*}^2}{2\sum_{i=0}^{t-1}\gamma_i(\zeta - 2\gamma_i\cL)} + \frac{\sigma^2\sum_{k=0}^{t-1}\gamma_k^2}{\sum_{i=0}^{t-1}\gamma_i(\zeta - 2\gamma_i\cL)}.
\end{align*}
Thus,
\begin{align*}
\min_{k=0,\dots,t-1}\E{f(x^k) - f(x^*)} \leq \frac{\norm{x^0 - x^*}^2}{2\sum_{i=0}^{t-1}\gamma_i(\zeta - 2\gamma_i\cL)} + \frac{\sigma^2\sum_{k=0}^{t-1}\gamma_k^2}{\sum_{i=0}^{t-1}\gamma_i(\zeta - 2\gamma_i\cL)}.
\end{align*}
For the different choices of step sizes:
\begin{enumerate}
\item If $\forall k \in \N, \; \gamma_k = \gamma \leq \frac{\zeta}{2\cL}$, then it suffices to replace $\gamma_k = \gamma$ in \eqref{eq:master-quasar-convex-ES}.

\item Suppose algorithm \eqref{eq:sgdstep} is run for $T$ iterations. Let $\forall k=0,\dots,T-1, \; \gamma_k = \frac{\gamma}{\sqrt{T}}$ with $\gamma \leq \frac{\zeta}{4\cL}$. Notice that since $\gamma \leq \frac{\zeta}{4\cL}$, we have $\zeta - 2\gamma\cL \leq \frac{1}{2}$. Then it suffices to replace $\gamma_k = \frac{\gamma}{\sqrt{T}}$ in \eqref{eq:master-quasar-convex-ES}.

\item Let $\forall k \in \N, \; \gamma_k = \frac{\gamma}{\sqrt{k+1}}$ with $\gamma \leq \frac{\zeta}{2\cL}$. Note that that since $\gamma_t = \frac{\gamma}{\sqrt{t+1}}$ and using the integral bound, we have that
\begin{equation}
\sum_{t=0}^{k-1}\gamma_t^2  = \gamma^2 \sum_{t=0}^{k-1} \frac{1}{t+1} \; \leq \; \gamma^2\left(\log(k) + 1 \right). \label{eq:ks94oo8s84}
\end{equation}
Furthermore using the integral bound again we have that
\begin{eqnarray}
\sum_{t=0}^{k-1}\gamma_t &\geq& 2\gamma\left(\sqrt{k} -1\right)
\label{eq:ia37ha37ha3}.
\end{eqnarray}
Now using~\eqref{eq:ks94oo8s84} and~\eqref{eq:ia37ha37ha3} we have that
\begin{eqnarray*}
\sum_{i=0}^{k-1}\gamma_i(\zeta-2\gamma_i\cL) &= &
\zeta\sum_{i=0}^{k-1}\gamma_i -2\cL \sum_{i=0}^{k-1}\gamma_i^2
\\
&\geq & 2\gamma\left( \zeta(\sqrt{k} -1) - \gamma \cL\left(\log(k) + 1 \right) \right).
\end{eqnarray*}
It remains to replace bound the sums in~\eqref{eq:master-quasar-convex-ES} by the values we have computed.
\end{enumerate}
\end{proof}

\paragraph{Specialized results for Interpolated Functions (with expected smoothness)}
Analogously to Corollary~\ref{cor:quasar-minib}, when the interpolated Assumption~\ref{ass:over} holds, we can  drop the expected smoothness assumption in Theorem~\ref{theo:master-quasar-convex-ES} in lieu for standard smoothness assumptions.

\begin{theorem}
\label{theo:SGDforStarOVer}
Let $f_i(x)$ be $L_i$-smooth for $i=1,\ldots, n$,   $f(x)$ be $\zeta-$quasar-convex \eqref{eq:quasar-convex} and assume that the interpolated Assumption~\ref{ass:over} holds. Consequently $g\in \text{ES}(\cL).$ If we use the step size
\begin{equation}
\label{ncaoikaosdover-smooth}
\gamma_k \equiv \gamma\leq  \frac{\zeta}{2\cL},
\end{equation} 
for all $k$, then SGD given by \eqref{eq:sgdstep} converges 
\begin{eqnarray}\label{eq:SGDforStarOVer}
\min_{i=1,\ldots, k}\E{f(x^i) - f^*}
\quad \leq \quad \frac{1}{k}\frac{\norm{x^0 - x^*}^2}{2\gamma(\zeta - 2\gamma\cL)}.
\end{eqnarray}
Hence, if $\gamma = \frac{\zeta}{4\cL}$ and given $\epsilon>0$ we have that
\begin{equation}
k\geq  \frac{4\cL}{\epsilon \zeta^2} \norm{x^0 - x^*}^2,
\end{equation}
implies $\min_{i=1,\ldots, t}\E{f(x^i) - f^*} \leq \epsilon.$ 
 \end{theorem}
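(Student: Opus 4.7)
The plan is to recognize Theorem~\ref{theo:SGDforStarOVer} as a direct specialization of the constant step-size case \eqref{eq:master-quasar-const-ES} of Theorem~\ref{theo:master-quasar-convex-ES}, once two simplifications afforded by interpolation have been verified. The only real work is to check that (i) the expected smoothness condition $g \in \text{ES}(\cL)$ is implied by the stated hypotheses, and (ii) the noise parameter $\sigma^2$ vanishes.

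For (i), I would appeal to the hierarchy in Theorem~\ref{theo:hierarchy}: the implication $L_i + \text{Interpolated} \Rightarrow L_i + x^*\text{-convex} \Rightarrow \text{ES}$ is proven in Section~\ref{sec:hierarchy}, and more concretely Corollary~\ref{lem:smoothconvexaroundxstCorollary} gives $\|\nabla f_i(x) - \nabla f_i(x^*)\|^2 \leq 2L_i(f_i(x) - f_i(x^*))$ for each $i$, which by Proposition~\ref{prop:master_lemma} yields $g \in \text{ES}(\cL)$ for some explicit constant $\cL$. For (ii), observe that interpolation (Assumption~\ref{ass:over}) forces $\nabla f_i(x^*) = 0$ for every $i$, and therefore $g(x^*) = \frac{1}{n}\sum_{i=1}^n v_i \nabla f_i(x^*) = 0$ almost surely for any sampling vector $v$, giving $\sigma^2 = \sup_{x^* \in \cX^*}\E{\|g(x^*)\|^2} = 0$.

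With these two observations in hand, Theorem~\ref{theo:master-quasar-convex-ES} applies with step-size $\gamma \leq \frac{\zeta}{2\cL}$, and its constant step-size bound \eqref{eq:master-quasar-const-ES} collapses, since the $\frac{\gamma \sigma^2}{\zeta - 2\gamma \cL}$ term disappears, to
\[
\min_{i=1,\ldots,k}\E{f(x^i) - f^*} \;\leq\; \frac{1}{k}\frac{\norm{x^0-x^*}^2}{2\gamma(\zeta - 2\gamma\cL)},
\]
which is exactly \eqref{eq:SGDforStarOVer}.

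For the iteration complexity claim, I would simply substitute $\gamma = \frac{\zeta}{4\cL}$ into the denominator: $2\gamma(\zeta - 2\gamma\cL) = \frac{\zeta}{2\cL}\bigl(\zeta - \frac{\zeta}{2}\bigr) = \frac{\zeta^2}{4\cL}$, so that the bound becomes $\frac{4\cL \norm{x^0-x^*}^2}{\zeta^2 k}$. Requiring this quantity to be at most $\epsilon$ and solving for $k$ yields $k \geq \frac{4\cL \norm{x^0-x^*}^2}{\epsilon \zeta^2}$. There is no genuine obstacle here; the whole argument amounts to noting that interpolation kills the noise term and activates ES, then invoking the already-proven master theorem.
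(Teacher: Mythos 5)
Your proposal is correct and follows essentially the same route as the paper's own proof: invoke the hierarchy of Theorem~\ref{theo:hierarchy} to get $g\in\text{ES}(\cL)$, note that interpolation forces $\sigma=0$, and then read off the result from the constant step-size bound \eqref{eq:master-quasar-const-ES} of Theorem~\ref{theo:master-quasar-convex-ES}, with the complexity claim following from substituting $\gamma=\zeta/(4\cL)$. The only difference is that you spell out the intermediate steps (Corollary~\ref{lem:smoothconvexaroundxstCorollary}, Proposition~\ref{prop:master_lemma}, and the vanishing of $g(x^*)$) in more detail than the paper does.
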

\begin{proof}
By Theorem~\ref{theo:hierarchy} we have that the ES condition holds. Thus Theorem~\ref{theo:master-quasar-convex-ES} holds. Furthermore, since $f$ is interpolated we have that $\sigma =0,$ which when combined with 
\eqref{eq:master-quasar-const-ES} from Theorem~\ref{theo:master-quasar-convex-ES} gives the result.
\end{proof}
Specializing Theorem~\ref{theo:SGDforStarOVer} to the full batch setting, we have that gradient descent (GD) with step size $\gamma = \frac{\zeta}{4L} $ converges at a rate of $$f(x^t) - f(x^*) \leq \frac{4L\norm{x^0 - x^*}^2}{\zeta^2k},$$
where we have used that $\cL =L$ in the full batch setting and the smoothness of $f$ guarantees that the sequences $f(x^1),\ldots, f(x^t)$ for GD is a decreasing sequence. Similar to the result of the main paper on GD, we note that this is exactly the rate given recently for GD for quasar-convex functions in~\cite{Guminov2017}, with the exception that we have a squared dependency on $\xi$ the quasar-convex function.

\paragraph{Quasar-strongly convex functions and Expected smoothness}
Similar to Theorem~\ref{theo:strcnvlin} we present below the convergence of SGD with constant step-size for $(\zeta, \lambda)$-quasar-strongly convex functions under the expected smoothness.
\begin{theorem}\label{theo:strcnvlin2}
Let $\zeta>0$. Assume $f$ is $(\zeta, \lambda)$-quasar-strongly convex and that $(f,\cD)\sim ES(\cL)$.
Choose   $\gamma^k=\gamma \in (0,  \frac{\zeta}{2\cL}]$ for all $k$. Then iterates of  SGD  given by (\ref{eq:sgdstep}) satisfy: 
\begin{equation}\label{eq:convsgd2}
\mathbb{E} \| x^k - x^* \|^2 \leq \left( 1 - \gamma \zeta \lambda \right)^k \| x^0 - x^* \|^2 + \frac{2 \gamma \sigma^2}{\zeta \lambda}. 
\end{equation}
\end{theorem}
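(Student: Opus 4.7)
The plan is to follow the same template as the proof of Theorem~\ref{theo:strcnvlin}, but replace the use of the \ref{eq:expresidual} bound~\eqref{eq:varbndrho3} with the expected-smoothness bound from Lemma~\ref{lem:varbndcL}. Setting $r^k \eqdef x^k - x^*$ and expanding from the SGD update~\eqref{eq:sgdstep} gives
\[
\|r^{k+1}\|^2 = \|r^k\|^2 - 2\gamma\langle r^k, g(x^k)\rangle + \gamma^2 \|g(x^k)\|^2.
\]
Taking expectation conditioned on $x^k$ replaces the stochastic gradient in the cross term with $\nabla f(x^k)$, and I would then invoke quasar-strong convexity~\eqref{eq:quasar-convex-strongly} (rearranged as $\langle \nabla f(x^k), r^k\rangle \geq \zeta[f(x^k)-f^*] + \tfrac{\zeta\lambda}{2}\|r^k\|^2$) to obtain
\[
\EE{\cD}{\|r^{k+1}\|^2} \leq (1-\gamma\zeta\lambda)\|r^k\|^2 - 2\zeta\gamma[f(x^k)-f^*] + \gamma^2\EE{\cD}{\|g(x^k)\|^2}.
\]

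Next, I would apply Lemma~\ref{lem:varbndcL} to bound $\EE{\cD}{\|g(x^k)\|^2} \leq 4\cL(f(x^k)-f^*) + 2\sigma^2$, which turns the coefficient of $[f(x^k)-f^*]$ into $-2\gamma(\zeta - 2\gamma\cL)$. The step-size restriction $\gamma \leq \zeta/(2\cL)$ makes this coefficient non-positive, so the functional-suboptimality term can be dropped. Taking total expectation yields the one-step contraction
\[
\E{\|r^{k+1}\|^2} \leq (1-\gamma\zeta\lambda)\E{\|r^k\|^2} + 2\gamma^2\sigma^2.
\]

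Finally, I would unroll this recursion and sum the resulting geometric series, exactly as in~\eqref{eq:la9ja38jf}, using $\sum_{j=0}^{k-1}(1-\gamma\zeta\lambda)^j \leq 1/(\gamma\zeta\lambda)$ to arrive at~\eqref{eq:convsgd2}. There is no real obstacle here: the proof is a direct translation of Theorem~\ref{theo:strcnvlin}, and the only thing to verify is that the threshold $\gamma \leq \zeta/(2\cL)$ is the correct analogue of $\gamma \leq \zeta/(2\rho + L)$ under ES. This is indeed the case, since Lemma~\ref{lem:varbndcL} gives the coefficient $4\cL$ where Lemma~\ref{lem:varbndrho} gave $2(2\rho + L)$, so $\zeta - 2\gamma\cL \geq 0$ is exactly the condition needed to cancel the $(f(x^k) - f^*)$ term.
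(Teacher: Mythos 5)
Your proposal is correct and is exactly the paper's intended argument: the paper's proof of this theorem is literally "similar to the proof of Theorem~\ref{theo:strcnvlin} but using ES instead of ER," i.e., replace the bound~\eqref{eq:varbndrho3} with~\eqref{eq:varbndcL2} from Lemma~\ref{lem:varbndcL}, use $\gamma \leq \zeta/(2\cL)$ to drop the $-2\gamma(\zeta-2\gamma\cL)(f(x^k)-f^*)$ term, and unroll the recursion as in~\eqref{eq:la9ja38jf}. Your check that $4\cL$ plays the role of $2(2\rho+L)$, so the step-size threshold $\zeta/(2\cL)$ is the right analogue, is precisely the only point that needed verifying.
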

\begin{proof}
Similar to the proof of Theorem~\ref{theo:strcnvlin} but using ES instead of ER.
\end{proof}

\subsubsection{PL and Expected Smoothness.}
\label{sec:PLandES}
In this section we present four main theorems for the convergence of SGD with constant and decreasing step size. Through our results we highlight the limitations of using expected smoothness in the PL setting and explain why one needs to have the expected residual to prove efficient convergence. 

\begin{theorem}
\label{theo:SGDforPolyakOptimization}
Let $f(x)$ be $L$-smooth and PL function and that $g \in ES(\cL)$. Choose $\gamma^k=\gamma \leq \frac{\mu}{2 L \cL}$ for all $k$. Then SGD given by \eqref{eq:sgdstep} converges as follows: 
\begin{equation}\label{functionTheorem}
\Exp[f(x^{k})-f^*] \leq \left(1- \gamma \mu \right)^k [f(x^0)-f^*] + \frac{ L \gamma \sigma^2} {\mu}
\end{equation}
 \end{theorem}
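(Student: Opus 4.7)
The plan is to mirror the structure of the proof of Theorem~\ref{theo:PLConstant} (the ER version), substituting the ES variance bound from Lemma~\ref{lem:varbndcL} in place of the ER bound from Lemma~\ref{lem:varbndrho} at the appropriate step. The step-size restriction $\gamma \leq \frac{\mu}{2L\cL}$ is chosen precisely so that the curvature/noise term produced by ES can still be absorbed by the linear descent term coming from the PL inequality.

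First, I would apply $L$-smoothness to the SGD update $x^{k+1} = x^k - \gamma g(x^k)$ to obtain
\[
f(x^{k+1}) \leq f(x^k) - \gamma \langle \nabla f(x^k), g(x^k)\rangle + \tfrac{L\gamma^2}{2}\|g(x^k)\|^2.
\]
Taking conditional expectation given $x^k$ and using $\E{g(x^k)} = \nabla f(x^k)$ together with the ES consequence~\eqref{eq:varbndcL2}, which gives $\E{\|g(x^k)\|^2} \leq 4\cL(f(x^k)-f^*) + 2\sigma^2$, yields
\[
\E{f(x^{k+1}) \mid x^k} \leq f(x^k) - \gamma\|\nabla f(x^k)\|^2 + 2L\gamma^2\cL (f(x^k)-f^*) + L\gamma^2\sigma^2.
\]

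Next, I would invoke the PL condition $\|\nabla f(x^k)\|^2 \geq 2\mu(f(x^k)-f^*)$ to bound the gradient term, subtract $f^*$ from both sides, and regroup to get
\[
\E{f(x^{k+1}) - f^* \mid x^k} \leq \bigl(1 - 2\gamma\mu + 2L\gamma^2\cL\bigr)(f(x^k)-f^*) + L\gamma^2\sigma^2.
\]
The choice $\gamma \leq \frac{\mu}{2L\cL}$ gives $2L\gamma^2\cL \leq \gamma\mu$, so the contraction factor is at most $1-\gamma\mu$. Taking full expectations, unrolling the one-step recursion, and bounding the resulting geometric series by $\sum_{j=0}^{k-1}(1-\gamma\mu)^j \leq \frac{1}{\gamma\mu}$ produces~\eqref{functionTheorem}.

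There is no real technical obstacle here; the argument is a verbatim transcription of the ER-based proof of Theorem~\ref{theo:PLConstant} with the single substitution of ES for ER in the variance bound. The only subtle point worth emphasizing is the comparison of the resulting step-size restriction $\gamma \leq \mu/(2L\cL)$ with the ER restriction $\gamma \leq 1/(L(1+2\rho/\mu))$: in the full-batch regime $\cL = L$ and $\rho = 0$, so the ES rate degrades to $\gamma \leq \mu/(2L^2)$ whereas the ER rate permits $\gamma \leq 1/L$, recovering standard gradient descent. This is exactly the gap flagged in the text preceding the theorem, and it justifies why the main body of the paper is phrased in terms of ER rather than ES.
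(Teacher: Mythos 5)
Your proof is correct, but it takes a different route from the paper. The paper proves this theorem in two lines: it invokes the hierarchy result (Theorem~\ref{theo:hierarchy}, item $ES(\cL)+PL(\mu)\Rightarrow ER(\cL-\mu)$) to conclude that the expected residual condition holds with $\rho = \cL-\mu$, and then cites Theorem~\ref{theo:PLConstant} as a black box, noting that the step-size requirement $\gamma \leq \frac{1}{1+2\rho/\mu}\frac{1}{L} = \frac{\mu}{L(2\cL-\mu)}$ is satisfied by the stated choice $\gamma\leq \frac{\mu}{2L\cL}$ (the paper writes this as an equality, which is a small slip, but it only makes the stated condition more conservative). You instead redo the one-step descent argument directly: $L$-smoothness, the ES variance bound $\E{\norm{g(x)}^2}\leq 4\cL(f(x)-f^*)+2\sigma^2$ from Lemma~\ref{lem:varbndcL}, the PL inequality, the observation that $\gamma\leq\frac{\mu}{2L\cL}$ gives $2L\gamma^2\cL\leq\gamma\mu$, and the geometric-series bound $\sum_{j=0}^{k-1}(1-\gamma\mu)^j\leq \frac{1}{\gamma\mu}$. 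All of these steps check out, and your recursion $\E{f(x^{k+1})-f^*\mid x^k}\leq(1-2\gamma\mu+2L\gamma^2\cL)(f(x^k)-f^*)+L\gamma^2\sigma^2$ yields exactly \eqref{functionTheorem}. What each approach buys: your direct derivation is self-contained, matches the stated step size exactly, and is in fact slightly simpler than the ER proof it mimics (the ES bound has no $\norm{\nabla f(x)}^2$ term, so no condition like $1-L\gamma/2>0$ is needed before applying PL); the paper's reduction is shorter, reuses existing machinery, and exposes the conceptual point that ES combined with PL is just a special case of ER, at the cost of a PL-dependent constant $\rho=\cL-\mu$ and the minor algebraic slip noted above. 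Your closing remark on the full-batch comparison ($\gamma\leq\mu/(2L^2)$ under ES versus $\gamma\leq 1/L$ under ER) correctly reproduces the limitation discussed after the theorem.
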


\begin{proof}
By Proposition~\ref{prop:hierarchyapp} we have that the expected smoothness condition holds with $\rho = \cL -\mu.$ Thus by Theorem~\ref{theo:PLConstant} we have that with a step size
\[\gamma_k = \gamma\leq \frac{1}{1 +2\rho/\mu} \frac{1}{L} = \frac{1}{1 +2( \cL -\mu)/\mu} \frac{1}{L} =  \frac{\mu}{2 L \cL}\]
the iterates converge according to~\eqref{functionTheorem}.
\end{proof}

\paragraph{Limitation of Theorem~\ref{theo:SGDforPolyakOptimization}.} 
Let us consider the case where $|S|= n$ with probability one. That is, each iteration~\eqref{eq:sgdstep} uses the full batch gradient. Thus $\sigma =0$ and the expected smoothness parameter becomes $\cL =L$. Consequently, from Theorem~\ref{theo:SGDforPolyakOptimization} we obtain: 
\begin{eqnarray}
\Exp[f(x^{k})-f^*]  &\leq& \left(1- \gamma \mu \right)^k [f(x^0)-f^*].
\end{eqnarray}
For $\gamma=\frac{\mu}{2 L \cL}$ (larger possible value) the rate of the gradient descent is $\rho=1- \frac{\mu^2}{2 L^2}$.Thus
the resulting iteration complexity (number of iterations to achieve given accuracy) for gradient descent becomes $k \geq 2L^2/\mu^2$. However it is known that for minimizing PL functions, the iteration complexity of gradient descent method is  $k \geq 2L/\mu$. Thus the result of Theorem~\ref{theo:SGDforPolyakOptimization} give as a suboptimal convergence for gradient descent and the gap between the predicted behavior and the known results could potentially be very large.

\subsection{Minibatch Corollaries without Interpolation}
\label{secapp:minibatch}
In this section we state the corollaries for the main theorems when $v$ is a $b$ minibatch sampling. Differently than what we did in the main paper, we will not assume interpolation. Instead, we will use the weaker assumptions that each $f_i$ is $x^*$--convex. 
\subsubsection{Quasar Convex}
\begin{corollary} \label{cor:quasar-minib-strong} Assume $f$ is $\zeta$-quasar-strongly convex  and that each 
$f_i$ is $L_i$--smooth and $x^*$--convex. If $v$ is a $b$-minibatch sampling and $\gamma_k \equiv \frac{1}{2} \frac{\zeta}{(2\rho + L)}$ then,
		\begin{equation}\label{eq:master-quasar-const-b}
\min_{t=0,\dots,k-1}\E{f(x^t) - f(x^*)} \leq 2\norm{x^0 - x^*}^2\frac{2L_{\max} \frac{n-b}{(n-1)b} + L}{ \zeta^2 k} + \frac{\frac{1}{b} \frac{n-b}{n-1} \sigma_1^2 }{2L_{\max} \frac{n-b}{(n-1)b} + L} .
	\end{equation}
\end{corollary}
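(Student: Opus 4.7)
The proof will be a short chain of substitutions that combines two earlier results already proven in the paper, so I expect no substantive obstacle. The plan is to first translate the minibatch hypothesis into concrete values for the \ref{eq:expresidual} constant and gradient-noise constant, then plug them into the constant-step-size bound of Theorem~\ref{theo:master-quasar-convex-res}.

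First I would verify that the hypotheses of Theorem~\ref{theo:master-quasar-convex-res} are met. The function $f$ is $\zeta$-quasar-convex by assumption (the ``strongly'' qualifier is inessential here since the forthcoming bound is the neighborhood-type bound of the quasar-convex case, and $\zeta$-quasar-strong-convexity implies $\zeta$-quasar-convexity). Since each $f_i$ is $L_i$-smooth, the average $f$ is smooth with constant $L \leq \frac{1}{n}\sum_i L_i$. Moreover, since each $f_i$ is $L_i$-smooth and $x^*$-convex, Proposition~\ref{prop:bniceconst} applies to the $b$-minibatch sampling and gives
\begin{equation*}
\rho \;=\; L_{\max}\frac{n-b}{(n-1)b}, \qquad \sigma^2 \;=\; \frac{1}{b}\frac{n-b}{n-1}\sigma_1^2,
\end{equation*}
so in particular $g\in \mathrm{ER}(\rho)$ with this specific $\rho$.

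Second I would apply part~1 of Theorem~\ref{theo:master-quasar-convex-res} with the constant step-size $\gamma \equiv \tfrac{1}{2}\zeta/(2\rho+L)$. This yields the bound \eqref{eq:master-quasar-const}:
\begin{equation*}
\min_{t=0,\dots,k-1}\E{f(x^t)-f(x^*)} \;\leq\; 2 r_0 \frac{2\rho+L}{\zeta^2 k} + \frac{\sigma^2}{2\rho+L}.
\end{equation*}
Substituting the above values of $\rho$ and $\sigma^2$, and recalling $r_0 = \|x^0-x^*\|^2$, gives exactly \eqref{eq:master-quasar-const-b}, completing the proof.

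The only place where one could potentially slip is checking that the step-size constraint $\gamma < \zeta/(2\rho+L)$ from Theorem~\ref{theo:master-quasar-convex-res} is automatic under the choice $\gamma = \tfrac{1}{2}\zeta/(2\rho+L)$, but this is obvious by construction. Consequently the corollary is a direct specialization; the ``main obstacle'' is merely bookkeeping of the minibatch-dependent constants, which Proposition~\ref{prop:bniceconst} has already done for us.
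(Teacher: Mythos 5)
Your proof is correct and follows essentially the same route as the paper's: establish that the $L_i$-smoothness and $x^*$-convexity of the $f_i$'s yield the \ref{eq:expresidual} condition with the minibatch constants of Proposition~\ref{prop:bniceconst}, then substitute $\rho$ and $\sigma^2$ into the constant-step-size bound~\eqref{eq:master-quasar-const} of Theorem~\ref{theo:master-quasar-convex-res}. The only cosmetic difference is that the paper invokes Theorem~\ref{theo:hierarchy} to justify that ER holds before citing the constants in~\eqref{eq:bniceconst}, whereas you read both the ER property and the constants directly off Proposition~\ref{prop:bniceconst}, which is equally valid.
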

\begin{proof}
By Theorem~\ref{theo:hierarchy} we have that the \ref{eq:expresidual} condition holds. Thus, the main Theorem~\ref{theo:master-quasar-convex-res} holds. Replacing the constants $\rho$ and $\sigma$ by their corresponding minibatch constants in~\eqref{eq:bniceconst} gives the result.
\end{proof}	

\subsubsection{PL Function}
\begin{corollary} \label{cor:PLConstantmini} Let $b \in \{1,\ldots, n\}$ and let $v$ be a  $b$--minibatch sampling with replacement.  Furthermore let each $f_i$ be $L_{i}$--smooth and convex around $x^*$. If $f$ satisfies the PL condition~\eqref{eq:PL}, then by Theorem~\ref{theo:PLConstant} 
if
\begin{equation} \label{eq:gammamini}
\gamma = \frac{ \mu (n-1)b}{\displaystyle \mu (n-1)b +2L_{\max}(n-b)} \frac{1}{L},
\end{equation}
then
 \begin{align}\label{eq:suboptmini}
 \E{f(x^{k}) -f^*} &  \leq  \left( 1- \frac{ \mu^2 (n-1)b}{\displaystyle \mu (n-1)b +2L_{\max}(n-b)} \frac{1}{L}\right)^{k}(f(x^0) -f^*) \nonumber \\
 & \quad \quad + \frac{n-b}{\displaystyle \mu (n-1)b +2L_{\max}(n-b)} \sigma^2 .
 \end{align}
 \end{corollary}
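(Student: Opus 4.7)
The plan is to derive Corollary~\ref{cor:PLConstantmini} as a direct specialization of Theorem~\ref{theo:PLConstant} using the explicit minibatch constants given in Proposition~\ref{prop:bniceconst}. First I would observe that the hypotheses of Proposition~\ref{prop:bniceconst} are satisfied: each $f_i$ is $L_i$--smooth and convex around $x^*$, and $v$ is a $b$--minibatch sampling. Applying that proposition gives $g \in \text{ER}(\rho)$ with $\rho = L_{\max}\frac{n-b}{(n-1)b}$ and gradient noise $\sigma^2 = \frac{1}{b}\frac{n-b}{n-1}\sigma_1^2$. This is the key ingredient that turns the abstract constants in Theorem~\ref{theo:PLConstant} into closed-form functions of the minibatch size $b$.

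Next I would verify that the stepsize~\eqref{eq:gammamini} is exactly the maximal allowable stepsize $\gamma = \frac{1}{1+2\rho/\mu}\frac{1}{L}$ prescribed by Theorem~\ref{theo:PLConstant}. Substituting the value of $\rho$ yields
\begin{equation*}
1 + \frac{2\rho}{\mu} \;=\; 1 + \frac{2 L_{\max}(n-b)}{\mu(n-1)b} \;=\; \frac{\mu(n-1)b + 2L_{\max}(n-b)}{\mu(n-1)b},
\end{equation*}
so that $\frac{1}{1+2\rho/\mu}\frac{1}{L}$ matches~\eqref{eq:gammamini} term-for-term.

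Finally, I would invoke the convergence bound~\eqref{eq:functionTheoremExpResidual} from Theorem~\ref{theo:PLConstant}, which reads $\E{f(x^k) - f^*} \leq (1 - \gamma\mu)^k [f(x^0) - f^*] + L\gamma\sigma^2/\mu$, and substitute the chosen $\gamma$ to obtain the contraction factor in~\eqref{eq:suboptmini}. The noise term simplifies by telescoping: $\frac{L\gamma\sigma^2}{\mu} = \frac{(n-1)b}{\mu(n-1)b + 2L_{\max}(n-b)}\cdot \frac{\sigma^2}{1}$ after cancelling the factor $L$ from $\gamma$ and the factor $\mu$ in the denominator, which yields the stated residual term (where $\sigma^2$ is interpreted as the minibatch noise from Proposition~\ref{prop:bniceconst}).

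The proof is essentially plug-and-chug with no real obstacle; the only step that requires a little care is confirming that Proposition~\ref{prop:bniceconst} applies under the weakened hypothesis of $x^*$--convexity (rather than full convexity), but this is exactly the generalization already established in Section~\ref{sec:lemmaexpres}. Consequently, no additional probabilistic or analytic machinery is needed beyond combining the two cited results.
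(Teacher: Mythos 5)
Your proposal follows exactly the paper's proof, which is a one-line plug-in: take $\rho(b)=L_{\max}\frac{n-b}{(n-1)b}$ and $\sigma^2(b)=\frac{1}{b}\frac{n-b}{n-1}\sigma_1^2$ from Proposition~\ref{prop:bniceconst} and substitute them into the step-size prescription and the bound~\eqref{eq:functionTheoremExpResidual} of Theorem~\ref{theo:PLConstant}; your check that~\eqref{eq:gammamini} equals $\frac{1}{1+2\rho/\mu}\frac{1}{L}$ and that $\gamma\mu$ reproduces the stated contraction factor is correct, as is the remark that Proposition~\ref{prop:bniceconst} applies under $x^*$--convexity only.

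The one slip is in your final identification of the noise term. You correctly compute $\frac{L\gamma\sigma^2}{\mu}=\frac{(n-1)b}{\mu(n-1)b+2L_{\max}(n-b)}\,\sigma^2$ where $\sigma^2$ is the gradient noise entering Theorem~\ref{theo:PLConstant}, i.e.\ the minibatch noise $\sigma^2(b)$; but you then assert this ``yields the stated residual term (where $\sigma^2$ is interpreted as the minibatch noise).'' Under that reading the prefactor in~\eqref{eq:suboptmini} would have to be $(n-1)b$, not $n-b$. The stated prefactor $n-b$ emerges only after the additional substitution $\sigma^2(b)=\frac{n-b}{(n-1)b}\sigma_1^2$, so the $\sigma^2$ appearing in~\eqref{eq:suboptmini} must be read as the single-element noise $\sigma_1^2$ from Proposition~\ref{prop:bniceconst} (the corollary's notation is loose on this point). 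This is a bookkeeping matter, not a gap in the argument; otherwise the proof is complete and identical in route to the paper's.
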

\begin{proof}
The proof follows by plugging in the values of $\rho$ and $\sigma$ given in Proposition~\ref{prop:bniceconst} into~\eqref{eq:gammaSGDPLcompl} and~\eqref{eq:functionTheoremExpResidual}.
\end{proof}

\end{document}